\numberwithin{equation}{section}
\font\tencmmib=cmmib10 \skewchar\tencmmib '60
\def\lessim{\ \lower4pt\hbox{$
		\buildrel{\displaystyle <}\over\sim$}\ }
\def\gessim{\ \lower4pt\hbox{$\buildrel{\displaystyle >}
		\over\sim$}\ }
\def\MS{{\text{\tiny MS}}}
\newcommand{\la}{\langle}
\newcommand{\ra}{\rangle}
\newcommand{\e}{\mathbb{E}}
\newcommand{\p}{\mathbb{P}}
\newcommand{\bsigma}{{\sigma}}
\newcommand{\NK}{N\!K}
\DeclareMathOperator*{\argmax}{arg max}
\newtheorem{lemma}{\bf Lemma}[section]
\newtheorem{theorem}{\bf Theorem}[section]
\newtheorem{corollary}{\bf Corollary}[section]
\newtheorem{remark}{\bf Remark}[section]
\newtheorem{proposition}{\bf Proposition}[section]
\newmdtheoremenv{theo}{Theorem}
\newcommand{\overlineit}[1]{\,\overline{\kern-0.2em #1\kern-0.1em }}
\newenvironment{Proof of lemma}{\noindent{\bf Proof of Lemma}}{\hfill$\Box$\newline}
\newenvironment{Proof of theorem}{\noindent{\it Proof of Theorem}}{\hfill\scriptsize{$\Box$}\newline}
\newenvironment{Proof of theorems}{\noindent{\bf Proof of Theorems}}{\hfill$\Box$\newline}
\newenvironment{Proof of proposition}{\noindent{\bf Proof of Proposition}}{\hfill$\Box$\newline}
\newenvironment{Proof of propositions}{\noindent{\bf Proof of Propositions}}{\hfill$\Box$\newline}
\newenvironment{Proof of exercise}{\noindent{\it Proof of Exercise:}}{\hfill$\Box$}
\begin{document}

\title{On the Fitness Landscape in the $\NK$ Model}	

 \author{Wei-Kuo Chen\thanks{University of Minnesota. Email: wkchen@umn.edu. Partly supported by NSF grant DMS-2246715 and Simons Foundation grant 1027727} \and  Si Tang \thanks{Lehigh University. Email: sit218@lehigh.edu. Partly supported by the NSF LEAPS-MPS Award DMS-2137614.} }

\maketitle 

\medskip
% \allowdisplaybreaks
\begin{abstract}
The $\NK$ model, introduced by Kauffman, Levin, and Weinberger, is a random field used to describe the fitness landscape of certain species with $N$ genetic loci, each interacting with $K$ others. The model has wide applications in understanding evolutionary and natural selection as it captures ruggedness feature of the fitness landscape. Earlier literature has been focused on the case $K$ being a fixed positive integer and used tools from Ergodic and Markov theory. In this paper, by viewing it as a statistical physics object, we investigate the $\NK$ model in the regime $K/N\to\alpha \in(0,1]$ via the spin glass methodologies. Our main result identifies the exact limits for the free energy at any temperature and the maximum fitness. Moreover, we show that the $\NK$ model exhibits a multiple-peak structure, namely, the number of near-fittest genomes that are asymptotically orthogonal to each other is exponentially large. Based on establishing the overlap gap properties, we obtain quantitative descriptions for the geometry of the fitness landscape and deduce that,  in particular, near-fittest evolutionary paths become impossible as the fitness levels of the genomes approach the global maximum for any $\alpha\in (0,1]$. Nevertheless, we also show that by choosing $\alpha$ sufficiently small, an evolutionary path maintained at a given fitness level can be constructed with high probability.
\end{abstract}

\tableofcontents

\section{Introduction}\label{sec1}
The $\NK$ model is a random field that describes the fitness landscape of some species consisting of $N$ loci (or genes) in the genome, each interacting with $K$ other loci. It was first proposed by Kauffman, Levin and Weinberger \cite{KauffmanLevin198711, Kauffman1989, Weinberger89} in the late 80s, who studied the ruggedness of the fitness landscape of the $\NK$ model and the evolutionary dynamics on this landscape via adaptive walks. The model has drawn wide attentions of evolutionary biologists \cite{FL92,MP89, Stein1992}, as it captures the {\it epistatic} phenomenon of gene interactions \cite{Lush1935}, i.e., the effect on fitness at one locus also depends on the states of several other loci. In recent years, more mathematicians become interested in the model \cite{DL03, ES02,  LP04,WTZ2000}, as the fitness landscape of the $\NK$ model exhibits a tunable ruggedness \cite{Wright1932} as $K$ varies, which promotes the existence of exponentially many local attractors in the space of genomes, a crucial feature shared by many statistical physics models (e.g., \cite{BAFK21, Auffinger2013Complexity,auffinger2023complexity, panchenkobook, TopoComplexPRL13}) and verified by many experimental evolutionary processes in the lab (e.g., \cite{de2014empirical, papkou2023rugged, reetz2008constructing}).

In the classic setup of the $\NK$ model, a genome 
$\bsigma=(\sigma_0, \sigma_2, \ldots, \sigma_{N-1})$
is a binary sequence of length $N$, and at each genetic locus $i$,  the allele $\sigma_i$ may have two possible types, denoted as $0$ and $1$ (e.g., mutant and wildtype, or ressesive and dominant). For each genome $\bsigma\in \{0, 1\}^N$, its fitness $
H_{N,K}(\bsigma)$ is expressed as the sum of the $N$ fitness components $X_i$, contributed by each locus $i$. The gene interaction structure is incorporated into the fitness component $X_i$, which is determined by the $i$-th locus as well as $K$ other epistatic loci, that is
\begin{align}
\label{eqn:defHamiltonian}
H_{N,K}(\bsigma)&:=\sum_{i=0}^{N-1}X_i(\sigma_i,\sigma_{\!\!j^{(i)}_1},\ldots,\sigma_{\!j^{(i)}_K}).
\end{align}
Previous studies have concerned about different variations of the $\NK$ model. In \cite{Kauffman93, KauffmanLevin198711}, the $K$ epistatic loci were chosen randomly, whereas in \cite{DL03, ES02,  Kauffman1989,LP04,WTZ2000}, they were either the $K$ successors or the $K$ nearest loci. The state space may also be different, depending on the goal of the study. For example, besides the binary setting  \cite{ DL03, ES02, KauffmanLevin198711, LP04},  
$\bsigma$ may represent base pairs along a DNA sequence with each $ \sigma_i \in \{A, G, C, T\}$, the set of nucleotides, and this setup is suitable to study the fitness contribution of the single-nucleotide polymorphisms (SNPs) in the genome. In addition, 
when considering the evolution of protein molecules,  people often chose $\sigma_i \in \{\text{all amino acids}\}$ and view the functionality of proteins, such as structural stability and selectivity, as a measure of fitness. \cite{MP89,robustgeneticcode24,proteinevo-comp25}.

The parameter $K$ controls the degree of epistasis, which results in different levels of ruggedness of the fitness landscape in the $\NK$ model. One extreme case, $K = 0$, is trivial,
because in this case, genes simply do not interact with each other (i.e., there is no epistasis) and the fittest genome $\bsigma^\ast$ can be obtained by
maximizing each fitness component $X_i(\sigma_i)$ separately, i.e., $\sigma_i^\ast = \arg \max_{\sigma_i} X_i(\sigma_i)$ for each $i = 0, 1,\ldots, N-1$.
There is only one local maximal fitness value in this setting, and the landscape of the fitness values is smooth over the space of all genomes. For larger values of $K$,
the landscape becomes more rugged. The case when $K = N-1$ is the other extreme, in which fitness values of different genomes are mutually independent,and the model is
known as the random energy model (REM) \cite{Derrida-REM80, Derrida-REM81}. In this case, the expected number of local optima grows exponentially as $N$ increases; the largest fitness value is the maximum of $2^N$ i.i.d. random variables, which is well-understood using tools from extreme-value theory \cite{Resnick2007extreme}. Many properties of REM have been thoroughly explored in recent years --- see surveys \cite{Huang2021,MM2009-chp5} and references therein.

For other values of $K$, most previous results were based on numerical simulations, e.g., \cite{KauffmanLevin198711, robustgeneticcode24, proteinevo-comp25, fitness-evo02}, or some heuristic arguments \cite{Weinberger89}. Mathematically rigorous results are very limited, and we briefly summarize here. When $K$ is fixed, the asymptotic behavior of the global maximum was characterized independently in \cite{ES02} and \cite{DL03} using different approaches. In \cite{ES02}, Evans and Steinsaltz related the global maximum with the max-plus product of certain random matrices and identified the exact limit in the case when $K=1$ and the fitness components are independent standard exponential random variables. Similar studies were done using the substochastic Harris chains in \cite{DL03} for the negative exponential case with $K=1$.
For local maximum, \cite{ES02} and \cite{DL03} computed the expected fitness value, and in \cite{DL03} a central limit theorem and a large deviation result were also obtained for the negative exponential case with $K=1$. Both studies proved an exponential growth for the number of local maxima as $N\to \infty$ and identified the exponents in the aforementioned special cases. The methods in \cite{DL03,ES02} apply to other values of $K$ provided $K$ stays fixed as $N\to\infty$. However, analytical computations for the cases when $K>1$ or when the fitness components follow other distributions are quite involved. The other scheme in which $K$ could grow with $N$ is also biologically relevant as indicated in \cite{Kauffman93}. This case was first studied heuristically by Weinberger in \cite{Weinberger89}, and some of the claims were later made rigorous in \cite{LP04}. Among other things, Limic and Pemantle showed that when $K/N\to \alpha$ as $N\to \infty$, the probability of a given genome being a local maxima behaved like $N^{-\alpha}$ when the fitness components are independent Gaussians. They also conjectured this would hold universally for other fitness component distributions and proved partial results that support their conjecture.

Despite of the results mentioned above, many fundamental questions are still unaddressed, especially concerning the fitness landscape in the regime when $K/N\to \alpha$ for some fixed constant $\alpha\in (0,1)$.  A key direction is to study the precise limiting behavior of the maximum fitness, as well as the geometric structure of its level sets.  
From the perspective of evolutionary dynamics, it is also of great interest to understand how one fittest genome could evolve into another. In this paper, we pursue these directions within the $\NK$ model with adjacent neighborhoods in the limit $K/N \to \alpha$, employing tools from mathematical spin glass theory \cite{panchenko2014parisi, talagrand2010mean, talagrand2011mean}. We investigate its statistical physics properties such as the asymptotic behavior of the overlap between spin configurations (i.e., genomes), the Gibbs measure, and the free energy. Ultimately, we aim to use these results to address the questions posed above and offer insights into the underlying evolutionary dynamics.

\subsection{Model Setup and Main Results}\label{sec:mainresults}

For each locus $i$, we consider its epistatic loci to be its $K$ successors, i.e., $(j^{(i)}_1,\ldots,j^{(i)}_K)=(i+1, i+2, \ldots, i+K)$ in \eqref{eqn:defHamiltonian}, with $K$ chosen to be asymptotically proportional to $N$ as $N\to \infty$. Specifically, for every $N\geq 1,$ let $K=\lfloor \alpha (N-1)\rfloor$ for some $\alpha 
\in (0, 1]$ fixed. The boundaries are identified, namely, $\sigma_{i+N} =\sigma_{i}$ for each $i$. The alleles are binary, however, we set the the two possible alleles to be $+1$ and $-1$, adopting the convention in statistical physics studies. We also denote $\Sigma_n := \{+1, -1\}^n$ for any integer $n\ge 1$. For each $i=0,1, \ldots, N-1$, the fitness components $\{X_i(\sigma_i, \sigma_{i+1}, \ldots, \sigma_{i+K}
): (\sigma_i,\ldots, \sigma_{i+K})\in \Sigma_{K+1} \}$ at the $i$-th locus are i.i.d. standard Gaussian random variables; these fitness components are also assumed to be independent of each other. 

With this setup,  $(H_{N,K}(\bsigma))_{\bsigma\in \Sigma_N}$ may be viewed as a centered Gaussian process over $\Sigma_N$, whose  covariance structure is given by 
\begin{align*}
    \e H_{N,K}(\bsigma^1)H_{N,K}(\bsigma^2)&=NQ(\bsigma^1,\bsigma^2), \quad \forall\bsigma^1, \bsigma^2\in \Sigma_N,
\end{align*}
where $Q(\bsigma^1,\bsigma^2)$ is called the {\it epistatic overlap} between the two genomes $\bsigma^1$ and $\bsigma^2$, defined as
$$
Q(\bsigma^1,\bsigma^2):=\frac{1}{N}\sum_{i=0}^{N-1}I(\sigma_i^1=\sigma_i^2,\ldots,\sigma_{i+K}^1=\sigma_{i+K}^2).
$$
We remark that, $Q$ is invariant to the exactly value (no matter $\{0,1\}$, $\{+1, -1\}$, or some other binary set) that we assign to the alleles and only depends on the agreement of the alleles at each locus. We refer to $Q$ as the epistatic overlap to distinguish it from the usual {\it overlap}, a fundamental quantity considered in many spin glass models and is defined as the (normalized) scalar product,
 \begin{align*}
     R(\bsigma^1,\bsigma^2)=\frac{1}{N}\sum_{i=0}^{N-1}\sigma_i^1\sigma_i^2.
 \end{align*}
This is another quantity that measures how similar the two genomes are. For any two genomes $\sigma^l, \sigma^{l'} \in \Sigma_N$, we will abbreviate their overlaps as $Q_{l,l'}:=Q(\sigma^l, \sigma^{l'})$ and $R_{l,l'}:=R(\sigma^l, \sigma^{l'})$, when there is no ambiguity.
\begin{remark}\rm
  Note that the overlap $R$ is asymptotically stable 
as long as the signs of no more than $o(N)$ loci are flipped, 
however, this is not the case for the epistatic overlap $Q$. 
To see this, for example, for any $\sigma^1$, letting $\sigma^2$ be constructed from $\sigma^1$ by flipping the sign of the first locus, $\sigma^2_0=-\sigma^1_0$, we have $R_{1,2}=(N-1)/N\approx R_{1,1}$, but $Q_{1,2}=(N-K-1)/N\approx1-\alpha < Q_{1,1}$. 
\end{remark}

 In this paper, we view the $\NK$ model as a statistical physics model by interpreting the genome $\sigma$ and the fitness function $H_{N,K}$ as the spin configuration and the Hamiltonian (or energy), respectively. The maximal fitness and the fittest genome, namely,
\begin{align}
	\label{eqn:def-MN}
	M_{N,K}:=\max_{\bsigma\in \Sigma_N}\frac{H_{N,K}(\bsigma)}{N}\ \ \mbox{and}\ \ \sigma^*:=\argmax_{\sigma\in \Sigma_N}\frac{H_{N,K}(\sigma)}{N},
\end{align}
are correspondingly called the ground state energy and the ground state. The fundamental idea of statistical physics for handling these objectives is to introduce the Gibbs measure, which is defined as, for a given (inverse) temperature $\beta>0$, 
\[
G_{N,K,\beta}(\bsigma)= \frac{e^{\beta H_{N,K}(\bsigma)}}{Z_{N,K}(\beta)},\ \ \forall \bsigma\in \Sigma_N,
\]
where $Z_{N,K}(\beta):=\sum_{\sigma}e^{\beta H_{N,K}(\sigma)}$ is called the partition function. The Gibbs measure describes the chance that a given genome $\sigma \in \Sigma_N$ is observed for a given parameter $\beta >0$, 
which can be interpreted as a  measure of natural selection in evolution: when $\beta$ is large, the Gibbs measure is highly concentrated on genomes with higher fitness values and natural selection would favor much strongly toward these genomes; while if $\beta$ is small, all genomes would have comparable chances to be selected by evolution. Denote by $\bsigma^1,\bsigma^2,\ldots$ a collection of independently sampled genomes (called the {\it replicas}) from the Gibbs measure $G_{N,K,\beta}$ and by $\la \cdot\ra_{\beta}$ the expectation with respect to $G_{N,K,\beta}$. The free energy is defined as
\begin{align*}
	F_{N,K}(\beta)&=\frac{1}{N}\ln Z_{N,K}(\beta),
\end{align*}
which plays the role as a soft approximation for $M_{N,K}$ through the simple bounds,
\begin{equation}
	\label{eqn:GSEbounds}
	M_{N,K}\leq \frac{F_{N,K}(\beta)}{\beta} \leq \frac{\ln 2}{\beta}+M_{N,K}.
\end{equation} 

\subsubsection{Free Energy, Maximum Fitness, and Multiple Peaks}
Our first main result characterizes the limiting behavior of the free energy and the maximum fitness as $N\to \infty$. While they appear to be not depending on the choice of $\alpha\in (0, 1]$, the free energy exhibits a phase transition at the critical temperature $\beta_c:=\sqrt{2\ln2} \approx 1.1774$. 
\begin{theorem}\label{thm:Fnk-Mnk}
	For any $\alpha\in (0,1],$ we have that 
	\begin{align*}
		\lim_{N\to\infty}\e F_{N,K}(\beta)&=\left\{\begin{array}{ll}
			\ln 2+\frac{\beta^2}{2},&\forall \beta<\beta_c,\\
			\\
			\beta \beta_c,&\forall \beta \geq \beta_c.
		\end{array}\right.
	\end{align*}
	It follows from \eqref{eqn:GSEbounds} that
	$\displaystyle\lim_{N\to\infty}\e M_{N,K}=\beta_c$.
\end{theorem}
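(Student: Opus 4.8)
The plan is to establish the free-energy limit first and then read off $\lim_{N\to\infty}\e M_{N,K}=\beta_c$ from the sandwich \eqref{eqn:GSEbounds}. Denote by $\phi(\beta)$ the claimed limit (the right-hand side in the theorem), which equals $\beta\beta_c$ for $\beta\ge\beta_c$. Then the upper bound in \eqref{eqn:GSEbounds} gives $\limsup_N \e M_{N,K}\le \phi(\beta)/\beta=\beta_c$, while the lower bound gives $\liminf_N \e M_{N,K}\ge \beta_c-\ln 2/\beta$; letting $\beta\to\infty$ pins down $\e M_{N,K}\to\beta_c$. Throughout I use that a single $H_{N,K}(\sigma)$ is centered Gaussian with variance $NQ(\sigma,\sigma)=N$, so its marginal tail estimates are exactly those of the REM, and that $F_{N,K}(\beta)$ concentrates around its mean by Gaussian concentration (its fluctuations are $O(N^{-1/2})$), which lets me pass freely between high-probability and in-expectation statements.

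For the upper bound I would argue by the first moment only, so that no correlation structure is needed. Jensen's inequality gives $\e F_{N,K}(\beta)\le \frac1N\ln \e Z_{N,K}(\beta)=\ln 2+\beta^2/2$, which is sharp for $\beta<\beta_c$. For $\beta\ge\beta_c$ this annealed bound overshoots, and I would instead control the density of states: writing $\mathcal N(m)$ for the number of genomes with $H_{N,K}(\sigma)\ge mN$, Markov's inequality together with a union bound gives $\mathcal N(m)\le e^{N(\ln 2-m^2/2+\eps)}$ simultaneously over a fine grid of thresholds, and $\mathcal N(\beta_c+\eps)=0$, with probability tending to $1$. Bucketing the energies and summing then yields $\frac1N\ln Z_{N,K}(\beta)\le \max_{0\le m\le\beta_c}\bigl(\beta m+\ln 2-m^2/2\bigr)+O(\eps)$; the constrained maximum is attained at $m=\beta_c$ for $\beta\ge\beta_c$, producing the bound $\beta\beta_c$. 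Concentration upgrades this to $\limsup_N\e F_{N,K}(\beta)\le \phi(\beta)$.

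The lower bound is where the $\NK$ correlations genuinely enter, and I would obtain it from a single second-moment estimate on $\mathcal N(m)$ for each fixed $m<\beta_c$. Showing $\mathcal N(m)\ge e^{N(\ln 2-m^2/2-o(1))}$ with high probability lower-bounds $Z_{N,K}(\beta)\ge e^{N(\beta m+\ln 2-m^2/2-o(1))}$; optimizing over $m\in(0,\beta_c)$ recovers $\ln 2+\beta^2/2$ for $\beta<\beta_c$ (at $m=\beta$) and $\beta\beta_c$ for $\beta\ge\beta_c$ (as $m\uparrow\beta_c$), completing the free-energy limit. To run the second moment I would expand $\e\mathcal N(m)^2=\sum_{\sigma^1,\sigma^2}\p(H_{N,K}(\sigma^1)\ge mN,\ H_{N,K}(\sigma^2)\ge mN)$ and organize the sum by the epistatic overlap $Q_{1,2}=q$: the pair is bivariate Gaussian with correlation $q$, so the joint tail is $\exp(-Nm^2/(1+q))$ to leading exponential order, while the number of genomes at epistatic overlap $q$ from a fixed reference is $e^{N\Sigma(q)}$. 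A runs/transfer-matrix computation identifies $\Sigma(q)=(1-q-\alpha)_+\ln 2$, since a positive epistatic overlap forces a block of $\approx(q+\alpha)N$ fully-agreeing coordinates and leaves only $\approx(1-q-\alpha)N$ free coordinates. The second moment matches the first moment squared provided $\ln 2+\Sigma(q)-m^2/(1+q)\le 2(\ln 2-m^2/2)$ for every $q\in(0,1]$.

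The main obstacle is precisely this last variational inequality, which \emph{fails} for the bare indicator once $\alpha$ is small: with $\Sigma(q)=(1-q-\alpha)\ln 2$ the inequality at $m=\beta_c$ reduces to $1-q-\alpha\le (1-q)/(1+q)$, and this is violated for moderate $q$ when $\alpha$ is small, because short windows make intermediate epistatic overlaps combinatorially cheap. To reach $\beta_c$ for every $\alpha\in(0,1]$ I would therefore replace the bare count by a \emph{truncated} second moment, restricting to genomes whose energy is spread across windows --- concretely, capping each window contribution $X_i(\sigma_i,\ldots,\sigma_{i+K})$ at a level just above $m$. Since two genomes are correlated only through the windows on which they agree, capping these shared contributions suppresses the favorable-correlation inflation of the joint tail and effectively restores the independent exponent $\exp(-Nm^2)$ for the over-counted pairs, so that the truncated variance matches the truncated mean squared up to $m\to\beta_c$. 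Verifying that the truncation removes negligible mass from the first moment (so the effective threshold is not lowered) and that it indeed tames all overlaps $q$ is the technical heart of the argument; the remaining ingredients --- the Gaussian tail asymptotics, the entropy $\Sigma(q)$, and the concentration step --- are routine.
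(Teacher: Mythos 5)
Your upper bound and your second-moment lower bound are sound and, for $\alpha\geq\alpha_*=3-2\sqrt2$, coincide with the paper's Section~2: the same entropy estimate $|\{NQ=l\}|\lesssim N2^{N-K-l}$, the same bivariate Gaussian tail, and the same variational condition, which indeed fails exactly when $\tfrac{q(1-q)}{1+q}>\alpha$. The genuine gap is your treatment of $\alpha<\alpha_*$. The truncation you propose --- capping each window contribution $X_i$ at a level $m+C$ --- cannot close the deficit, and the obstruction is quantifiable. To keep the truncated first moment at the exponent $\ln 2-m^2/2-o(1)$ as $m\uparrow\beta_c$, you must pay only $c(C):=-\ln\Phi(C)=o(1)$ per window, which forces $C$ large. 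But for two genomes sharing $qN$ windows, the unconstrained optimal joint-tail strategy puts each shared window at level $\tfrac{2m}{1+q}$, and the cap $m+C$ binds only when $C<\tfrac{m(1-q)}{1+q}\leq m(1-q)$. Once $C$ exceeds this (which it must, for $q$ bounded away from $0$, as soon as $C\gtrsim\beta_c$), the cap does not alter the leading rate $\exp(-Nm^2/(1+q))$ at all; the only gain is the ``cap cost'' on the tilted measure, of order $qN\cdot(-\ln\Phi(C-\tfrac{m(1-q)}{1+q}))$, which is exponentially small in $C^2$ and cannot cover the order-one deficit $\bigl(\tfrac{q(1-q)}{1+q}-\alpha\bigr)\ln 2$ at, say, $q=\sqrt2-1$. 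A numerical check at $m=\beta_c$, $q=1/2$, $\alpha\approx0$ shows no choice of $C$ satisfies the required inequality. More structurally, preventing the shared block from carrying a linear excess would require controlling $\max_{|A|=qN}\sum_{i\in A}(X_i-m)$ over all subsets $A$, which is $\Theta(N)$ for i.i.d.\ Gaussians no matter how you cap individual coordinates. The paper's Remark~2.6 makes the same point: the second-moment bound is already sharp at leading exponential order, so no moment-method repair of this kind is available below $\alpha_*$.

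The paper's actual route for $\alpha\in(0,\alpha_*)$ is entirely different and is the real content of Section~3: one lower-bounds $\e F_{N,K}(\beta)$ by the free energy of a $p$-spin variant $H^p_{N,K}$ (using $I(\sigma^1_j=\sigma^2_j,\,i\le j\le i+K)\le R_{N,K,i}(\sigma^1,\sigma^2)^p$ for even $p$ and Gaussian interpolation), shows this free energy is continuous in the ratio $K/N$, compares it by a second interpolation to a \emph{balanced multi-species} $p$-spin model, invokes the Bates--Sohn reduction of balanced multi-species models to the single-species $p$-spin model (hence the Parisi formula and the known replica-symmetric region $\beta\le\beta_p$), and finally sends $p\to\infty$ so that $\beta_p\to\beta_c$. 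If you want to salvage a self-contained argument you would need either this comparison machinery or a substantially different truncation (one acting on collections of windows rather than individual ones); as written, the ``technical heart'' you defer is not merely unverified but false for the specific truncation you describe.
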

As mentioned before, when $K=N-1$ (i.e., $\alpha=1$), the $NK$ model is exactly the same as  REM. Our results in Theorem \ref{thm:Fnk-Mnk} show that the limiting free energy and maximum fitness of the $\NK$ model agree with those of REM, and this holds for all values of $\alpha \in (0,1]$. The next result characterizes a multiple-peak property for the near-fittest genomes in the $\NK$ model.

\begin{theorem}\label{thm:peaks}
	Let $\alpha\in(0,1].$ For any $0<\varepsilon<\beta_c,$ there exist two absolute constants $C_1,C_2>0$ independent of $N$ such that with probability at least $1-C_1e^{-N/C_1}$, there exists a set $S_N(\varepsilon)\subset \Sigma_{N}$ with the following properties
	\begin{enumerate}
		\item $|S_N(\varepsilon)|\geq e^{C_2N}$;
		\item $\frac{H_{N,K}(\sigma)}{N}\geq M_{N,K}-\varepsilon$ for all $\sigma\in S_N(\varepsilon)$;
		\item $Q(\sigma,\sigma')=0$ and $|R(\sigma,\sigma')|<\varepsilon$ and  for any two distinct $\sigma,\sigma'\in S_N(\varepsilon).$
	\end{enumerate}
\end{theorem}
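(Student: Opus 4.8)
The plan is to exhibit $S_N(\varepsilon)$ as a maximal pairwise-orthogonal subfamily of the near-fittest level set $A:=\{\sigma\in\Sigma_N:\ H_{N,K}(\sigma)/N\ge \beta_c-\varepsilon/2\}$, proceeding in three stages: (i) reduce the random threshold $M_{N,K}$ to the deterministic value $\beta_c$; (ii) show $A$ is exponentially large; (iii) extract from $A$ an exponentially large family orthogonal in both $Q$ and $R$. For (i), Theorem~\ref{thm:Fnk-Mnk} gives $\e M_{N,K}\to\beta_c$, while the Borell--TIS inequality applied to $M_{N,K}$, which as a function of the Gaussian disorder has Lipschitz constant $N^{-1/2}$ (equivalently, variance proxy $\max_\sigma Q(\sigma,\sigma)/N=1/N$), yields $\prob{|M_{N,K}-\beta_c|\ge\delta}\le 2e^{-N\delta^2/2}$; thus up to an event of exponentially small probability it suffices to work with the fixed threshold $\beta_c-\varepsilon/2$, since on $\{M_{N,K}\le\beta_c+\varepsilon/2\}$ every $\sigma\in A$ satisfies $H_{N,K}(\sigma)/N\ge M_{N,K}-\varepsilon$. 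The single most useful structural input is that $Q(\sigma,\sigma')=0$ means no length-$(K+1)$ window of $\sigma$ and $\sigma'$ ever coincides, so $H_{N,K}(\sigma)$ and $H_{N,K}(\sigma')$ are assembled from disjoint sets of the i.i.d.\ components $X_i(\cdot)$ and are therefore \emph{genuinely independent}; more generally the fitnesses of any $Q$-orthogonal family are jointly independent.

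For (ii), since $H_{N,K}(\sigma)/N\sim\mathcal N(0,1/N)$ the Gaussian tail gives $p:=\prob{H_{N,K}(\sigma)/N\ge \beta_c-\varepsilon/2}=e^{-N(\beta_c-\varepsilon/2)^2/2+o(N)}$, so $\e|A|=2^Np=e^{c_1N+o(N)}$ with $c_1=\ln2-(\beta_c-\varepsilon/2)^2/2=\tfrac12\beta_c\varepsilon-\tfrac18\varepsilon^2>0$. A matching high-probability lower bound $|A|\ge e^{(c_1-o(1))N}$ should hold, but establishing it rigorously runs into the same difficulty as (iii), so I treat both together below. For (iii) I would set up the conflict graph on $A$ whose edges join pairs with $Q>0$ or $|R|\ge\varepsilon$, and bound its degrees: by Hoeffding at most $2^{N+1}e^{-N\varepsilon^2/2}$ genomes satisfy $|R(\sigma,\cdot)|\ge\varepsilon$, while $Q(\sigma,\cdot)>0$ forces agreement on a full run of $K+1\approx\alpha N$ consecutive loci and hence involves at most $N2^{(1-\alpha)N}$ genomes. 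A Tur\'an-type bound then converts the lower bound $|A|\ge e^{(c_1-o(1))N}$ together with an upper bound $e^{\gamma N}$ on the number of conflicting pairs, with $\gamma<2c_1$, into an orthogonal family of size $\ge e^{(2c_1-\gamma)N}$, giving the desired $C_2$.

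The main obstacle is closing (ii)--(iii) for small $\alpha$. A plain second moment does not work: two near-fittest genomes that happen to agree on a macroscopic block of $\gtrsim\alpha N$ loci share many components $X_i$, are positively correlated with correlation equal to their epistatic overlap, and a short optimization over the overlap value shows that such pairs inflate $\e|A|^2$ (and the bad-pair count) beyond $(\e|A|)^2$ precisely once $\alpha$ drops below an explicit constant (near $3-2\sqrt2$ in the relevant computation). To repair this I would use a \emph{truncated} moment: restrict to those near-fittest genomes whose conflict-degree is at most $e^{\gamma N}$ for a suitable $\gamma<c_1$, and show by a conditional first-moment estimate that all but an $o(1)$ fraction of $A$ has bounded degree---here one exploits that a macroscopic shared block forces $R$ away from $0$, so the genuinely dangerous pairs (simultaneously near-fittest, $Q$-correlated, and $R$-orthogonal) must pay a double large-deviation cost, in the disorder and in the entropy of the free coordinates. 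Greedily selecting inside this bounded-degree sub-collection yields $S_N(\varepsilon)$ with $|S_N(\varepsilon)|\ge e^{C_2N}$ and pairwise $Q=0$, $|R|<\varepsilon$. Finally, all exceptional events---the concentration of $M_{N,K}$, the lower bound on $|A|$, and the degree truncation---carry exponentially small probabilities, so a union bound produces the stated $1-C_1e^{-N/C_1}$ guarantee. Controlling these macroscopically-overlapping correlated near-fittest pairs in the small-$\alpha$ regime is exactly where the overlap-gap estimates are indispensable, and is the step I expect to be most delicate.
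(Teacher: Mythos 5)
Your strategy (extract a pairwise-orthogonal family from the near-fittest level set $A$ by a Tur\'an/greedy argument on a conflict graph) is not the paper's route, and it has a genuine gap precisely in the regime you flag as delicate, namely $\alpha<\alpha_*=3-2\sqrt2$. The obstruction is quantitative and your proposed repair does not remove it. Writing $s=\beta_c-\varepsilon/2$ and $f_s(t)=(\alpha+t)\ln 2-ts^2/(1+t)$ as in Section~\ref{sec:mmomentmethod}, the expected number of pairs in $A$ with $Q_{1,2}=t$ is of order $(\e|A|)^2e^{-Nf_s(t)}$, and $f_s(t)<0$ near $t=(1-\alpha)/2$ once $\alpha<\alpha_*$ and $\varepsilon$ is small. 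The same computation conditioned on one endpoint shows that the expected conflict-degree of a near-fittest genome at epistatic overlap $t$ is $e^{N(c_1-f_s(t))}$ (with your $c_1=\ln 2-s^2/2$), so your degree-truncation step is governed by exactly the same exponent and fails in exactly the same regime; a first-moment bound can therefore never certify that most of $A$ has degree at most $e^{\gamma N}$ with $\gamma<c_1$. The ``double large-deviation cost'' heuristic does not help: a pair with $Q_{1,2}\approx(1-\alpha)/2$ already has $R_{1,2}\geq\alpha$ forced by the agreeing windows, but such a pair is an edge of your conflict graph because $Q>0$, and its entropy and disorder costs are precisely the two factors already counted in $e^{-Nf_s(t)}$ --- there is no extra penalty to extract. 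Nor do the overlap gap estimates rescue you: for $\alpha\le\alpha_*$, Theorem~\ref{thm:near-fittest-lowepi} leaves the entire interval $[c_1(\alpha),c_2(\alpha)]$, which contains $(1-\alpha)/2$, available to near-fittest pairs, so the conflict graph may genuinely be dense and Tur\'an then returns an independent set of size $O(1)$. (A smaller related issue: the exponential lower bound on $|A|$ itself cannot come from a plain second moment for small $\alpha$ at this level; such a bound is available via free-energy concentration and differentiability, cf.\ the proof of Corollary~\ref{cor1.1}, but you do not invoke that route.)

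The paper avoids all of this by a different mechanism (Section~\ref{sec:MPP}): it introduces correlated copies $H_{N,K,s}^{\ell}=\sqrt sH_{N,K}+\sqrt{1-s}H_{N,K}^{\ell}$ with $H_{N,K}^{\ell}$ i.i.d., and observes that for two such copies the covariance at epistatic overlap $t$ is damped to $Nts$, so that for $s=s_1$ small the restricted-maximum bound $\e\overlineit{M}_{N,K,s_1}(\{Q_{1,2}=t\})\le 2\sqrt{(1+ts_1)(2-\alpha-t+\epsilon_N)\ln 2}$ is uniformly below $2\beta_c$ for \emph{every} $\alpha\in(0,1]$; hence the two maximizers are $Q$-orthogonal (and $R$-orthogonal) with exponentially high probability. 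A H\"older-type monotonicity inequality for $s\mapsto\e Q(\sigma^{s,1},\sigma^{s,2})$ (Lemma~\ref{holder}, from Chatterjee's superconcentration theory) then transports this orthogonality to $s_0$ close to $1$, where each maximizer is an $\varepsilon$-near-maximizer of the original $H_{N,K}$; taking exponentially many independent copies and a union bound over pairs finishes the proof. If you want to salvage a level-set argument valid for all $\alpha\in(0,1]$, you need a new idea to control the pairs with $Q_{1,2}\in[c_1(\alpha),c_2(\alpha)]$; as written, the proposal does not close.
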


\begin{remark}\rm Theorem \ref{thm:peaks} states that for any value $\alpha\in (0,1]$, the landscape of the $\NK$ model contains exponentially many near-fittest genomes that are approximately pairwise orthogonal (with respect to either $Q$ or $R$).
	Note that, with $R_{1,2}\approx 0$, any near-fittest genome pairs $\sigma^1 \ne \sigma^2$ from $S_N(\varepsilon)$ would disagree at approximately $N/2$ loci. Furthermore, since $Q_{1,2}=0$, these mismatched loci can not be too clustered together in the genome such that a perfect match of $K+1$ (or more) consecutive loci may still exist.
\end{remark}
\begin{remark}\label{rmk1.3}\rm
	In an earlier work \cite{chatterjee2008chaos}, Chatterjee showed that there exists a constant $C>0$ such that for any $N>K\geq 1$, with probability at least $1-\delta$ for $\delta:=C(\ln K)^{-1/2}$, there is a subset $A\subset \Sigma_N$ satisfying that $|A|\geq 1/\delta,$ $H_{N,K}(\sigma)/N\geq (1-\delta)M_{N,K}$ for all $\sigma\in A,$ and $Q(\sigma,\sigma')\leq \delta$ for all distinct $\sigma,\sigma'\in A$. Under our setting $K\approx \alpha N$, Chatterjee's result suggests that the $|A|$ is at least of logarithmic scale in $N$. Our Theorem \ref{thm:peaks} improves this estimate by establishing an exponential growth for the size of the set, and moreover, the orthogonality of these genomes also holds under the metric $R(\sigma,\sigma')$. The statements can be further strengthened in the regime $\alpha > \alpha_\ast$ (see Corollary \ref{cor1.1} later), where the set $S_N(\varepsilon)$ and the constant $C_2$ can be described explicitly. 
\end{remark}
\begin{remark}\rm
	Previous work \cite{DL03,ES02, LP04,Weinberger91} have concerned about the number of local fitness maxima of the $\NK$ model for various distributions of the fitness components and for $K$ being either fixed or growing linearly with $N$. Note a genome $\sigma$ is called an local fitness maximum if $H_{N,K}(\sigma)\ge H_{N,K}(\sigma')$ for any $\sigma'$ that differs from $\sigma$ at exactly one locus. In these studies, local fitness maxima are found to be exponentially many --- however, this does not imply that the number of near-fittest genomes behaves the same way. In fact, in a recent large-scale evolutionary experiment \cite{papkou2023rugged}, it was found that most local maxima had low fitness values; there were only 74 high-fitness peaks out of the 514 local maxima identified in the experiment. Our result in Theorem \ref{thm:peaks} confirms that the number of near-fittest genomes also grows exponentially with $N$ as $N\to\infty$.
\end{remark}

\subsubsection{Overlap Gap Properties}
Following the multiple-peak property, several question naturally arise. For instance, {\it are {\bf all} near-fittest genomes pairwise orthogonal, as those in the set $S_N(\epsilon)$}? {\it How are near-fittest genomes interconnected?} In particular, as proposed by S. Evans and discussed in \cite[Open Problem 4.5]{chatterjee2014superconcentration}, {\it 
	is it possible to evolve from one near-fittest genome to another along a path where all intermediate genomes remain near-fittest?}
More precisely, one is interested in evolutionary paths connecting two near-fittest genomes of $O(1)$ many steps such that (i) in each step only a small fraction of the $N$ loci mutate, and (ii) all intermediate genomes along the path remain near-fittest. 
These requirements are especially relevant in the context of evolution: firstly, for natural populations, it is unlikely to have too many mutations in one generation; secondly, genomes of lower fitness are more likely to be eliminated under natural selection, making evolutionary paths through such states significantly less probable.  

We address the questions mentioned above by characterizing the fitness landscape in terms of the overlaps $Q$ and $R$. Let \begin{align}\label{def:alpha-star}
	\alpha_\ast:=3-2\sqrt{2}\approx 0.172.
\end{align}
We arrange our results in this subsection according to the value of the epistasis parameter $\alpha$, starting from $\alpha>\alpha_*$, the high-epistasis regime. 
Our first result below shows that the epistasis overlap $Q_{1,2}$ of two independently sample genomes undergoes a phase transition at $\beta_c$. 

\begin{theorem}\label{thm:Q-high-epi}
	Assume $\alpha_*<\alpha\le 1$ and recall that $\beta_c = \sqrt{2\ln 2}$. For any $\beta\leq \beta_c$,
	\begin{align}\label{eqn:Q-high-epi-hightemp}
		\lim_{N\to\infty}\e \bigl\la I(Q_{1,2}=0)\bigr\ra_\beta=1,
	\end{align}
	and for any $\beta>\beta_c$,
	\begin{align}
		\lim_{N\to\infty}\e\bigl\la I(Q_{1,2}=0)\bigr\ra_\beta&= \frac{\beta_c}{\beta},\label{eqn:Q-high-epi-lowtemp-1}\\
		\limsup_{N\to\infty}\e\bigl\la I(Q_{1,2}=1)\bigr\ra_{\beta}&=1-\frac{\beta_c}{\beta}.\label{eqn:Q-high-epi-lowtemp-2}
	\end{align}
\end{theorem}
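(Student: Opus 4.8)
The plan is to split the Gibbs mass according to the value of the epistatic overlap and to control each piece separately. The first, purely combinatorial, observation is that $Q_{1,2}$ can never land in the gap $(1-\alpha,1)$: writing $\varepsilon_i=I(\sigma^1_i=\sigma^2_i)$, one has $NQ_{1,2}=\sum_i\prod_{j=0}^K\varepsilon_{i+j}$, which counts the cyclic windows of length $K+1$ that are entirely agreements, so $NQ_{1,2}=\sum_r(\ell_r-K)^+$ over the maximal runs of $1$'s in $\varepsilon$. Any pair with $Q_{1,2}>0$ must contain a run of length $\ge K+1$, and a short count shows $Q_{1,2}\le 1-\alpha$ unless $\sigma^1=\sigma^2$ (equivalently $Q_{1,2}=1$). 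Thus it suffices to prove that (i) the mass on the intermediate range $Q_{1,2}\in(0,1-\alpha]$ vanishes, and (ii) the remaining mass splits between $Q_{1,2}=0$ and $Q_{1,2}=1$ as in the REM; the value at $Q_{1,2}=0$ then follows by complementation. Throughout write $F(\beta)=\lim_N\e F_{N,K}(\beta)$ for the limit in Theorem \ref{thm:Fnk-Mnk}.

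For step (i) I would run a truncated (quenched) moment argument. Fix $q\in(0,1-\alpha]$ (a multiple of $1/N$) and set $Z_q=\sum_{Q_{1,2}=q}e^{\beta(H_{N,K}(\sigma^1)+H_{N,K}(\sigma^2))}$, so $\la I(Q_{1,2}=q)\ra_\beta=Z_q/Z_{N,K}(\beta)^2$. By Gaussian concentration of $F_{N,K}$ one has $Z_{N,K}(\beta)^2\ge e^{2N(F(\beta)-\delta)}$ off an event of probability $\le e^{-cN}$, so it remains to bound $Z_q$. The number of pairs at overlap $q$ grows like $e^{N(\ln 2+\phi(q))}$ with $\phi(q)=(1-q-\alpha)\ln 2$ (the extremal configurations place one run of $1$'s of length $(q+\alpha)N$ and fill the remaining $(1-q-\alpha)N$ coordinates freely), and a pair at overlap $q$ has fitnesses jointly Gaussian with covariance $N\bigl(\begin{smallmatrix}1&q\\q&1\end{smallmatrix}\bigr)$. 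The key point is that the annealed bound $\e Z_q$ is lossy near $\beta_c$; instead, a first-moment estimate shows that with high probability no pair at overlap $q$ has energy density $(u,v)$ outside the feasible region $\{\Psi(u,v,q)\ge 0\}$, where $\Psi(u,v,q)=\ln 2+\phi(q)-\tfrac{u^2-2quv+v^2}{2(1-q^2)}$. Hence, up to $e^{o(N)}$,
\[
\tfrac1N\ln Z_q\le \sup_{\Psi(u,v,q)\ge 0}\bigl[\Psi(u,v,q)+\beta(u+v)\bigr].
\]
When the unconstrained optimizer is feasible this recovers the annealed exponent, but in the relevant near-critical/low-temperature regime the optimizer sits on the boundary $\Psi=0$, where symmetry gives $u=v=\sqrt{(1+q)(\ln 2+\phi(q))}$ and the bound becomes $2\beta\sqrt{(1+q)(2-q-\alpha)\ln 2}$. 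Subtracting $2F(\beta)$ and specializing to $\beta\ge\beta_c$ (so $F(\beta)=\beta\beta_c$), the exponent is negative iff $h(q):=(1+q)(2-q-\alpha)<2$. Maximizing, $h$ is largest at $q^\ast=(1-\alpha)/2\in(0,1-\alpha]$ with $h(q^\ast)=(3-\alpha)^2/4$, so $\sup_q h(q)<2$ is exactly $(3-\alpha)^2<8$, i.e. $\alpha>3-2\sqrt2=\alpha_\ast$. This is precisely where the hypothesis enters. As the exponent is then uniformly negative over the $O(N)$ admissible values of $q$, summing yields $\e\la I(0<Q_{1,2}\le 1-\alpha)\ra_\beta\to 0$; the sub-critical regime $\beta\le\beta_c$ is analogous and imposes no stronger condition on $\alpha$.

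For step (ii) the high-temperature side is immediate once I note $\la I(Q_{1,2}=1)\ra_\beta=\sum_\sigma G_{N,K,\beta}(\sigma)^2=Z_{N,K}(2\beta)/Z_{N,K}(\beta)^2$. Applying Theorem \ref{thm:Fnk-Mnk} at $2\beta$ and $\beta$ gives $\tfrac1N\ln\bigl(Z_{N,K}(2\beta)/Z_{N,K}(\beta)^2\bigr)\to F(2\beta)-2F(\beta)$, and a direct check gives $\beta^2-\ln 2<0$ when $2\beta<\beta_c$ and $-(\beta-\beta_c)^2<0$ when $\beta_c/2\le\beta<\beta_c$; hence $\e\la I(Q_{1,2}=1)\ra_\beta\to 0$, and with step (i) this yields $\e\la I(Q_{1,2}=0)\ra_\beta\to 1$ for $\beta\le\beta_c$. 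For $\beta>\beta_c$ the same computation only gives $F(2\beta)-2F(\beta)=0$, pinning the exponent but not the constant, so here I would pass to a REM comparison: step (i) shows that the genomes carrying non-negligible Gibbs mass are pairwise either equal or at epistatic overlap $0$, and two genomes with $Q=0$ have independent fitnesses; consequently the extremal fitness process converges to the same Poisson process as in the REM, the normalized Gibbs weights converge to a Poisson--Dirichlet $\mathrm{PD}(\beta_c/\beta)$ law, and $\e\sum_\sigma G_{N,K,\beta}(\sigma)^2\to 1-\beta_c/\beta$. Complementing gives $\e\la I(Q_{1,2}=0)\ra_\beta\to \beta_c/\beta$.

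The main obstacle is step (i): replacing the annealed second moment (which is tight only up to $\beta=\sqrt{\ln 2}$ and would force $\alpha>1/2$) by the energy-truncated estimate that remains tight up to $\beta_c$, and carrying out the boundary optimization correctly, is exactly what produces the sharp constant $\alpha_\ast=3-2\sqrt2$. A secondary technical point is making the Poisson--Dirichlet limit in step (ii) for $\beta>\beta_c$ rigorous directly from the suppression of intermediate overlaps, rather than from exact independence of the top fitnesses; this is presumably why the statement at $Q_{1,2}=1$ is phrased with a $\limsup$.
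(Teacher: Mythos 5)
Your step (i) --- suppressing the intermediate overlaps $Q_{1,2}\in(0,1-\alpha]$ via the run-counting of agreement windows and the truncated entropy--energy bound $2\beta\sqrt{(1+q)(2-q-\alpha)\ln 2}$, maximized at $q=(1-\alpha)/2$ to produce the threshold $(3-\alpha)^2<8$, i.e.\ $\alpha>\alpha_*$ --- is essentially the paper's argument (Lemma \ref{lem2} for the counting, Lemma \ref{main:proof:lem2} for the restricted free energy, Gaussian concentration and a union bound over the $O(N)$ admissible values of $q$). The genuine gap is in how you identify the constant $\beta_c/\beta$ for $\beta>\beta_c$. You propose to pass from ``genomes carrying non-negligible Gibbs mass are pairwise at $Q=0$, hence pairwise independent'' to convergence of the extremal process to the REM Poisson process and of the Gibbs weights to a Poisson--Dirichlet $\mathrm{PD}(\beta_c/\beta)$ law, whence $\e\sum_\sigma G_{N,K,\beta}(\sigma)^2\to 1-\beta_c/\beta$. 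Pairwise uncorrelatedness of a Gaussian family does not give joint independence, and the inference from pairwise decoupling to Poissonian statistics of the extremes (and then to Poisson--Dirichlet weights) is a substantial theorem that you neither prove nor reduce to anything established here; as written it is an assertion, not an argument. The paper sidesteps all of this with one identity: Gaussian integration by parts gives $\beta(1-\e\la Q_{1,2}\ra_\beta)=\e F_{N,K}'(\beta)$, and Griffith's lemma for convex functions together with the explicit, differentiable limit of Theorem \ref{thm:Fnk-Mnk} yields $\lim_N\e\la Q_{1,2}\ra_\beta=1-\beta_c/\beta$ for $\beta>\beta_c$. Since your step (i) kills the Gibbs mass on $\{0<Q_{1,2}<1\}$, this expectation is carried entirely by $\{Q_{1,2}=1\}$, which gives \eqref{eqn:Q-high-epi-lowtemp-2} and then \eqref{eqn:Q-high-epi-lowtemp-1} by complementation. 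That identity is the missing idea, and without it (or a full extremal-process analysis) your proof of the low-temperature constants is incomplete.

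A secondary issue: your high-temperature treatment of $\la I(Q_{1,2}=1)\ra_\beta=Z_{N,K}(2\beta)/Z_{N,K}(\beta)^2$ produces the exponent $F(2\beta)-2F(\beta)=-(\beta-\beta_c)^2$ for $\beta\ge\beta_c/2$, which vanishes at $\beta=\beta_c$, so the critical temperature included in \eqref{eqn:Q-high-epi-hightemp} is not covered by your argument. The derivative identity again resolves this: $F$ is differentiable at $\beta_c$ with $F'(\beta_c)=\beta_c$, so $\e\la Q_{1,2}\ra_{\beta_c}\to0$ and hence $\e\la I(Q_{1,2}=1)\ra_{\beta_c}\le\e\la Q_{1,2}\ra_{\beta_c}\to0$.
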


This theorem states that  at weaker strength of  natural selection ($\beta < \beta_c$), two independently sampled genomes from  the Gibbs measure are always {\it completely decoupled}, i.e., a mismatch exists in any segment of length $K+1$; whereas at higher strength of natural selection ($\beta > \beta_c$), they will either align perfectly with each other or are completely decoupled.

Next, we establishes the so-called {\it overlap gap property} in the sense that the overlaps between genomes whose fitness exceed a certain level are forbidden to charge values in some sub-intervals. To this end, for any nonempty set $S\subset \Sigma_N\times\Sigma_N$, we define the coupled maximum fitness constrained by $S$ as
\begin{align}\label{def:barM}
	\overlineit{M}_{N,K}(S)& : =\max_{(\sigma^1,\sigma^2)\in S}\Bigl(\frac{H_{N,K}(\sigma^1)}{N}+\frac{H_{N,K}(\sigma^2)}{N}\Bigr).
\end{align}
The following result provides quantitative bounds for the level of fitness and the overlaps of the genomes in the high-epistasis regime.

\begin{theorem}\label{thm:nearfittest-highepi}
	Assume $\alpha_*<\alpha\leq 1.$
	The follow statements hold. 
	\begin{enumerate}
		\item We have
		\begin{align}
			\begin{split}\label{eqn:nearfittest-highepi-Q}&\limsup_{N\to\infty}\e\overlineit{M}_{N,K}(\{0<Q_{1,2}<1\})\\
				&\leq 2\lim_{N\to\infty}\e M_{N,K}-2\beta_c\Bigl(1-\frac{3-\alpha}{2\sqrt{2}}\Bigr).
			\end{split}
		\end{align}
		
		\item For any $\delta\in (0,1),$
		\begin{align}
			\begin{split}\label{eqn:nearfittest-highepi-R}&\limsup_{N\to\infty}\e\overlineit{M}_{N,K}(\{\delta<|R_{1,2}|<1\})\\
				&\leq 2\lim_{N\to\infty}\e M_{N,K}-2\beta_c\min\Bigl(1-\sqrt{\frac{h(\delta)}{2}},1-\frac{3-\alpha}{2\sqrt{2}}\Bigr),
			\end{split}
		\end{align}
		where for $u\in [0,1],$
		\begin{align}\label{eqn:def-h}
			h(u)&:=1-\frac{1+u}{2}\log_2\frac{1+u}{2}-\frac{1-u}{2}\log_2\frac{1-u}{2}
		\end{align}
		is a strictly decreasing function with $h(0)=2$ and $h(1)=1.$
		
	\end{enumerate}

\end{theorem}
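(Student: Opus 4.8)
The plan is to run a first-moment (union-bound) argument for the coupled ground state, organized by the value of the epistatic overlap $Q_{1,2}$, and to convert the resulting tail estimate into the stated bound on $\e\,\overlineit{M}_{N,K}$. For a pair $(\sigma^1,\sigma^2)$ the coupled energy $Y:=\frac{1}{N}\bigl(H_{N,K}(\sigma^1)+H_{N,K}(\sigma^2)\bigr)$ is centered Gaussian with variance $\e Y^2=\frac{2(1+Q_{1,2})}{N}$, using $\e H_{N,K}(\sigma^1)H_{N,K}(\sigma^2)=NQ_{1,2}$ and $Q_{l,l}=1$; hence $\p(Y>t)\le \exp\!\bigl(-\tfrac{Nt^2}{4(1+Q_{1,2})}\bigr)$. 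Writing $\mathcal N_N(q)$ for the number of pairs with $Q_{1,2}=q$ and $s(q):=\limsup_N \tfrac1N\ln\mathcal N_N(q)$, a union bound over the $O(N)$ admissible values of $q$ shows that $\p(\overlineit{M}_{N,K}(S)>t)\to 0$ exponentially fast as soon as $t>\sup_{q}2\sqrt{(1+q)s(q)}$, the supremum over the $q$-values occurring in $S$. To upgrade this to an expectation bound I would use $\overlineit{M}_{N,K}(S)\le 2M_{N,K}$ together with the Borell--TIS concentration of $M_{N,K}$ (the Gaussian field $H_{N,K}/N$ has variance $1/N$, so $M_{N,K}$ has uniformly bounded second moment), giving $\e\bigl[\overlineit{M}_{N,K}(S)\,I(\overlineit{M}_{N,K}(S)>t)\bigr]\to 0$ by Cauchy--Schwarz; letting $t$ decrease to the threshold yields $\limsup_N\e\,\overlineit{M}_{N,K}(S)\le \sup_q 2\sqrt{(1+q)s(q)}$. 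Only upper bounds on counts are needed, which matches the $\limsup$-form of the claim.

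The crux of Part 1 is the epistatic entropy $s(q)$ for $0<q<1$. I would argue combinatorially that the $qN$ all-agree windows of length $K+1$ must be carried by maximal agreement-runs of length $\ge K+1$, of which there are at most $\lfloor 1/\alpha\rfloor$ (a constant, since $\alpha>\alpha_*>0$); their union $V$ of forced-agreement positions has $|V|\ge (q+\alpha)N-o(N)$, the minimum achieved when the window-starts are consecutive. Selecting the (constantly many) runs costs only a polynomial factor, and the remaining at most $(1-\alpha-q)N$ positions carry all the entropy, so $\mathcal N_N(q)\le \mathrm{poly}(N)\,2^{(2-\alpha-q)N}$, i.e. $s(q)\le(2-\alpha-q)\ln2$ (and $\mathcal N_N(q)=0$ once $q>1-\alpha$). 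It then remains to maximize $4(1+q)s(q)=4\ln2\,(1+q)(2-\alpha-q)$ over $q\in(0,1-\alpha]$: the concave quadratic $(1+q)(2-\alpha-q)$ peaks at $q_*=\tfrac{1-\alpha}{2}$ with value $\bigl(\tfrac{3-\alpha}{2}\bigr)^2$, so $\sup_q 2\sqrt{(1+q)s(q)}=(3-\alpha)\sqrt{\ln2}=2\beta_c\tfrac{3-\alpha}{2\sqrt2}$. Since $\lim\e M_{N,K}=\beta_c$ by Theorem \ref{thm:Fnk-Mnk}, this equals $2\lim\e M_{N,K}-2\beta_c\bigl(1-\tfrac{3-\alpha}{2\sqrt2}\bigr)$, establishing \eqref{eqn:nearfittest-highepi-Q}.

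For Part 2 I would split $\{\delta<|R_{1,2}|<1\}$ according to whether $Q_{1,2}=0$ or $Q_{1,2}>0$. On $\{0<Q_{1,2}<1,\ \delta<|R_{1,2}|<1\}\subseteq\{0<Q_{1,2}<1\}$ — the inclusion holding because $Q_{1,2}=1$ forces $\sigma^1=\sigma^2$ and hence $|R_{1,2}|=1$ — monotonicity of $\overlineit{M}_{N,K}(\cdot)$ and Part 1 give the threshold $2\beta_c\tfrac{3-\alpha}{2\sqrt2}$. On $\{Q_{1,2}=0,\ \delta<|R_{1,2}|<1\}$ the coupled variance is the generic $\tfrac2N$, while the number of pairs with $|R_{1,2}|=r$ is at most $2^{N}\binom{N}{(1-r)N/2}\le 2^{Nh(r)}$ by the definition of $h$; as $h$ is strictly decreasing, the $q=0$ threshold is $\sup_{r\ge\delta}2\sqrt{h(r)\ln2}=2\sqrt{h(\delta)\ln2}=2\beta_c\sqrt{h(\delta)/2}$. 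Taking the larger of the two regime thresholds gives $\overlineit{M}_{N,K}\le 2\beta_c\max\bigl(\sqrt{h(\delta)/2},\,\tfrac{3-\alpha}{2\sqrt2}\bigr)$, which rearranges exactly to the minimum of the two reductions in \eqref{eqn:nearfittest-highepi-R}.

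The main obstacle is the epistatic entropy estimate $s(q)\le(2-\alpha-q)\ln2$: unlike the ordinary overlap $R$, the quantity $Q$ is sensitive to how agreements cluster along the cyclic genome, so the count genuinely requires controlling that at most $\lfloor1/\alpha\rfloor$ maximal runs can produce all-agree windows and that the forced-agreement region has size at least $(q+\alpha)N-o(N)$. Everything else is routine: the Gaussian tail, the optimization of an explicit concave quadratic, the binomial entropy count for $R$, and the tail-to-expectation passage via Borell--TIS. I would also need to keep the lattice/boundary effects uniform (the overlap $q$ lives on a $\tfrac1N$-grid and $K=\lfloor\alpha(N-1)\rfloor$), but these only contribute $o(N)$ corrections that vanish in the limit.
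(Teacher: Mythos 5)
Your proposal is correct and follows essentially the same route as the paper: a union bound over the $O(N)$ admissible overlap values, the entropy estimate $|\{Q_{1,2}=t\}|\le \mathrm{poly}(N)\,2^{(2-\alpha-t)N}$ (Lemma \ref{lem2} in the paper, which you reprove by a cruder but sufficient run-counting argument), the Gaussian maximum bound with variance $2N(1+t)$, the optimization of $(1+t)(2-\alpha-t)$ at $t=(1-\alpha)/2$, and the identical split of $\{\delta<|R_{1,2}|<1\}$ into the $Q_{1,2}=0$ piece (controlled by the binomial entropy $h$) and the $0<Q_{1,2}<1$ piece (controlled by Part 1). The only cosmetic difference is that you pass from tail bounds to expectations via truncation and Cauchy--Schwarz, whereas the paper bounds $\e\,\overlineit{F}_{N,K}$ directly via Lemma \ref{main:proof:lem2} and then uses the concentration of Lemma \ref{concentration:barversion}.
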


Note that the maximum fitness $M_{N,K}$ and $\overlineit{M}_{N,K}(S)$ here are all concentrated with high probability (see Lemma \ref{lem3} below) so that the above results are essentially valid without taking the expectations, where an additional error on which can be made as small as possible will be introduced to the above inequalities. 
For the epistasis overlap $Q$, noting that $\lim_{N\to\infty}\e M_{N,K}=\beta_c,$ we readily see from \eqref{eqn:nearfittest-highepi-Q} that with high probability, if $\sigma^1 \ne \sigma^2$ are two genomes with fitness at least $N\beta_c(3-\alpha)/(2\sqrt{2}),$ then they must be completely decoupled, i.e.,  $Q_{1,2}=0$. The inequality \eqref{eqn:nearfittest-highepi-R} quantifies how the overlap $R_{1,2}$ changes with the fitness level on the landscape: for any $\delta\in (0,1)$, if $\sigma^1 \ne \sigma^2$ are two genomes with fitness at least $NE(\delta)$, where
	\begin{align}\label{energylevel:eq1}
		E(\delta):=\lim_{N\to\infty}\e M_{N,K}-\beta_c\min\Bigl(1-\sqrt{\frac{h(\delta)}{2}},1-\frac{3-\alpha}{2\sqrt{2}}\Bigr),
	\end{align}
	then their overlap must satisfy $|R_{1,2}|\leq \delta$, and thus, their distance is in between $\sqrt{2(1-\delta)}$ and $\sqrt{2(1+ \delta)}$. 
    Consequently, this 
    implies that whenever $\eta,\delta\in(0,1)$ with $\eta<(1-\delta)/2$, it is not possible to exist an evolutionary path, along which the genome in each generation maintains fitness level at last $E(\delta)$, in the meantime, any two consecutive generations have no more than $\eta N$ mutations since, in this case, their overlaps will exceed $1-2\eta>\delta .$ In particular,
    as $\delta\to0$, $E(\delta)$ becomes closer to the optimal fitness and an evolution path consisting of near-fittest genomes must involve more than $\eta N\gtrapprox N/2$ mutations in at least one generation, which is biologically improbable.
     See Figure \ref{figure1}(a) for the structure of the near-fittest genomes when $\delta\approx 0$ and Figure \ref{figure-E-delta} (Left) for how the fitness level $E(\delta)$  changes with $\delta$ for a few $\alpha$ values in the high-epistasis regime $(\alpha_\ast, 1]$. 
     
\begin{figure}[ht]
    \centering
	\includegraphics[width=0.90\textwidth]{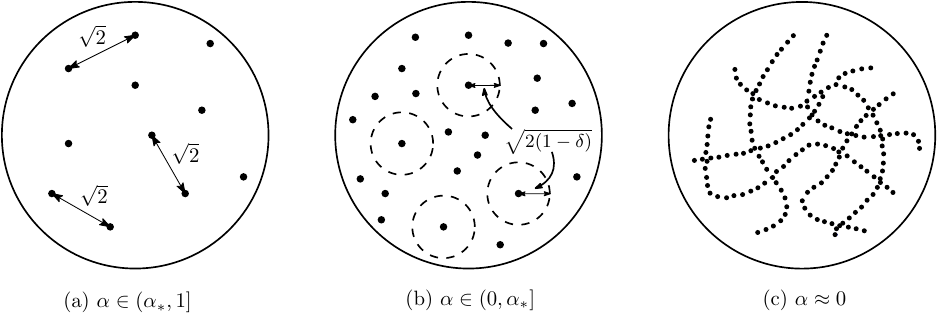}
	\caption{Schematic figures of the near-fittest genomes with respect to the metric $R$. In (a), the genomes are approximately mutually orthogonal so that their distances are almost $\sqrt{2}.$ In (b), the genomes all keep at least a fixed positive distance, $\sqrt{2(1-\delta)}$, from each other for some $\delta\in (0,1).$ In (c), the genomes asymptotically commute between each other.}
	\label{figure1}
\end{figure}

As a corollary, when $\alpha>\alpha_*$, we get a stronger result than Theorem \ref{thm:peaks}, for which the set $S_N(\varepsilon)$ can be taken as the level set, $
\mathcal{L}_N(\varepsilon):=\{\sigma:H_{N,K}(\sigma)/N\geq M_{N,K}-\varepsilon\}$
for $\varepsilon>0$, and its cardinality can be described more explicitly.

\begin{corollary}\label{cor1.1}
	Assume $\alpha_*<\alpha\leq 1$. For any $0<\delta<1$, consider the level set $\mathcal{L}_N(\varepsilon)$ with
	\begin{align}\label{cor1.1:eq1}
		\varepsilon=\frac{\beta_c}{7}\min\Bigl(1-\sqrt{\frac{h(\delta)}{2}},1-\frac{3-\alpha}{2\sqrt{2}}\Bigr).
	\end{align}
	Then, for any $\gamma>0$, with probability at least  $1-Ce^{-N/C}$ for some $C=C(\delta, \gamma)$, $\mathcal{L}_N(\varepsilon)$ satisfies the following 
	\begin{enumerate}
		\item for any two distinct $\sigma,\sigma'\in \mathcal{L}_N(\varepsilon),$ $Q(\sigma,\sigma')=0$ and $|R(\sigma,\sigma')|<\delta$; 
		\item $\Bigl|\frac{1}{N}\ln |\mathcal{L}_N(\varepsilon)|-\varepsilon\bigl(\beta_c-\frac{\varepsilon}{2}\bigr)\Bigr|<\gamma$.
	\end{enumerate}
\end{corollary}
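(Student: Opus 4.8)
The plan is to prove Corollary \ref{cor1.1} by combining the overlap gap property from Theorem \ref{thm:nearfittest-highepi} (for part 1) with a second-moment / concentration argument for the cardinality of the level set (for part 2). First I would address part 1. Fix $\delta\in(0,1)$ and set $\varepsilon$ as in \eqref{cor1.1:eq1}. The key observation is that if $\sigma,\sigma'\in\mathcal{L}_N(\varepsilon)$ are distinct, then $H_{N,K}(\sigma)/N+H_{N,K}(\sigma')/N\ge 2(M_{N,K}-\varepsilon)$, so the pair $(\sigma,\sigma')$ is a legal competitor in the definition \eqref{def:barM} of $\overlineit{M}_{N,K}(S)$. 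Hence to rule out $0<Q_{1,2}<1$ or $\delta<|R_{1,2}|<1$ it suffices to show $2(M_{N,K}-\varepsilon)$ strictly exceeds the relevant constrained maxima. Since $\varepsilon$ is chosen to be a small fraction (one-seventh) of the gap appearing in Theorem \ref{thm:nearfittest-highepi}, the bound $2M_{N,K}-2\varepsilon > \overlineit{M}_{N,K}(\{0<Q_{1,2}<1\})$ and the analogous $R$-statement will hold with high probability once I replace the expectations in Theorem \ref{thm:nearfittest-highepi} by their high-probability counterparts. This is exactly the point flagged after the theorem statement: both $M_{N,K}$ and $\overlineit{M}_{N,K}(S)$ concentrate (Lemma \ref{lem3}), so I can transfer the inequalities \eqref{eqn:nearfittest-highepi-Q}--\eqref{eqn:nearfittest-highepi-R} to hold without expectation up to an arbitrarily small additive error, and the factor $1/7$ buffer absorbs that error as well as the difference between $M_{N,K}$ and its limit $\beta_c$. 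The upshot is that on an event of probability $1-Ce^{-N/C}$, no two distinct points of $\mathcal{L}_N(\varepsilon)$ can have $Q_{1,2}\in(0,1)$ or $|R_{1,2}|\in(\delta,1)$; combined with the trivial fact that $Q(\sigma,\sigma')=1$ forces $\sigma=\sigma'$, this yields $Q(\sigma,\sigma')=0$ and $|R(\sigma,\sigma')|\le\delta$.

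For part 2 I would estimate $N^{-1}\ln|\mathcal{L}_N(\varepsilon)|$. The target value $\varepsilon(\beta_c-\varepsilon/2)$ is precisely the REM large-deviation rate: in the REM the number of configurations with energy density at least $m$ behaves like $\exp(N(\ln 2 - m^2/2))$, and setting $m=\beta_c-\varepsilon=\sqrt{2\ln2}-\varepsilon$ gives $\ln 2-(\beta_c-\varepsilon)^2/2 = \varepsilon\beta_c-\varepsilon^2/2 = \varepsilon(\beta_c-\varepsilon/2)$. So the plan is a first- and second-moment computation for the counting variable $Z:=\sum_{\sigma}I(H_{N,K}(\sigma)/N\ge M_{N,K}-\varepsilon)$, or more conveniently for the slightly shifted count $\sum_\sigma I(H_{N,K}(\sigma)/N\ge \beta_c-\varepsilon')$ with $\varepsilon'$ close to $\varepsilon$, using that $M_{N,K}\to\beta_c$. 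The upper bound $N^{-1}\ln|\mathcal{L}_N(\varepsilon)|\le \varepsilon(\beta_c-\varepsilon/2)+\gamma/2$ follows from a union bound / first-moment (Markov) estimate on $\e Z$, since each $H_{N,K}(\sigma)/N$ is Gaussian with variance $1/N$ and a standard Gaussian tail gives $\prob{H_{N,K}(\sigma)/N\ge m}\le \exp(-Nm^2/2)$, so $\e Z\le 2^N e^{-N m^2/2}=\exp(N(\ln2-m^2/2))$.

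The matching lower bound is where the main obstacle lies. A naive second-moment argument on all of $\Sigma_N$ fails because of the strong correlations in $H_{N,K}$ between genomes with large epistatic overlap $Q_{1,2}$, which inflate $\e Z^2$ relative to $(\e Z)^2$. The natural fix, and the step I expect to be most delicate, is to restrict the count to a well-separated subfamily: I would count only configurations that are pairwise epistatically decoupled ($Q=0$, equivalently no shared block of $K+1$ consecutive matching loci) so that the corresponding Hamiltonians are either independent or weakly correlated, and then run the truncated second-moment method on this restricted ensemble. By the covariance formula $\e H_{N,K}(\sigma^1)H_{N,K}(\sigma^2)=NQ_{1,2}$, two genomes with $Q_{1,2}=0$ have independent fitness values, which makes the second moment factorize cleanly. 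The combinatorial input is to show that the number of such pairwise-$Q$-orthogonal, near-fittest genomes is still of the desired exponential order, i.e. that restricting to $Q=0$ does not cost exponential entropy — this is plausible because, as in the remark following Theorem \ref{thm:peaks}, the $Q=0$ constraint only forbids long perfect matches and removes a negligible fraction of candidates. Concentration of $Z$ around $\e Z$ (via the Gaussian concentration already packaged in Lemma \ref{lem3}, or Paley--Zygmund on the truncated count) then upgrades the in-expectation bounds to the high-probability statement, and choosing $C=C(\delta,\gamma)$ large enough and intersecting with the event from part 1 completes the proof.
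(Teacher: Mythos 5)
Your argument for part~1 is essentially the paper's: on the intersection of the concentration events for $M_{N,K}$ and $\overlineit{M}_{N,K}(S)$ (Lemmas \ref{lem3} and \ref{concentration:barversion}), the bounds \eqref{eqn:nearfittest-highepi-Q}--\eqref{eqn:nearfittest-highepi-R} hold without expectations up to a small error, the factor $1/7$ in \eqref{cor1.1:eq1} absorbs that error, and any two distinct points of $\mathcal{L}_N(\varepsilon)$ form an admissible pair for $\overlineit{M}_{N,K}$, forcing $Q=0$ and $|R|\le\delta$. That part is fine.

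Part~2 is where there is a genuine gap, on two counts. First, the ``restricted ensemble'' you propose cannot exist at the required size: if $A\subset\Sigma_N$ is a deterministic family in which every pair has $Q=0$, then in particular any two members of $A$ must disagree somewhere in the single window $\{0,1,\ldots,K\}$, so the restriction map $\sigma\mapsto(\sigma_0,\ldots,\sigma_K)$ is injective on $A$ and $|A|\le 2^{K+1}\approx 2^{\alpha N}$. For $\alpha<1$ the first moment of the count over such a family at level $\beta_c-\varepsilon$ is of order $e^{N(\alpha\ln 2-\ln 2+\varepsilon\beta_c-\varepsilon^2/2)}$, which is exponentially smaller than the target $e^{N\varepsilon(\beta_c-\varepsilon/2)}$ (and can even be exponentially small), so the truncated count cannot certify the lower bound. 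If what you actually intend is the standard decomposition of $\e Z^2$ over the values of $Q_{1,2}$ and showing the $Q_{1,2}>0$ terms are negligible, that is exactly Section \ref{sec:mmomentmethod} of the paper and does work for $\alpha\ge\alpha_*$ --- but then there is no ``combinatorial input'' to supply; the issue is purely analytic and already handled by Lemmas \ref{lem2} and \ref{lem1}. Second, and more fundamentally, even granting $\e Z^2\le C(\e Z)^2$, Paley--Zygmund yields only $\p(Z\ge\e Z/2)\ge 1/(4C)$, a \emph{constant} probability, whereas the corollary demands $1-Ce^{-N/C}$. You cannot invoke Lemma \ref{lem3} to upgrade this, because $N^{-1}\ln|\mathcal{L}_N(\varepsilon)|$ is not a Lipschitz function of the Gaussian disorder (it is built from indicators). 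The paper sidesteps both problems by a different mechanism: it applies the Gibbs-measure concentration result of \cite{auffinger2018concentration}, which pins $\bigl\la I(|H_{N,K}(\sigma)/N-F'(\beta)|\ge\eta)\bigr\ra$ down to $e^{-\Gamma N}$ with exponentially high probability, and then reads off
\begin{align*}
\frac1N\ln\Bigl|\Bigl\{\sigma:\Bigl|\frac{H_{N,K}(\sigma)}{N}-F'(\beta)\Bigr|\le\eta\Bigr\}\Bigr|\approx F(\beta)-\beta F'(\beta)
\end{align*}
from the free energy, which \emph{does} concentrate exponentially; a covering argument over $\beta\in[\beta_c-\varepsilon,\beta_c]$ together with $F'(\beta)=\beta$ then produces the exponent $\max_{\beta_c-\varepsilon\le\beta\le\beta_c}(\ln 2-\beta^2/2)=\varepsilon(\beta_c-\varepsilon/2)$ with probability $1-Ce^{-N/C}$. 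You would need to replace your Paley--Zygmund step with an argument of this type (or some other device converting the count into concentrating free-energy-type quantities) for part~2 to go through.
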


\begin{figure}[H]
    \centering
	\includegraphics[width=0.4\textwidth]{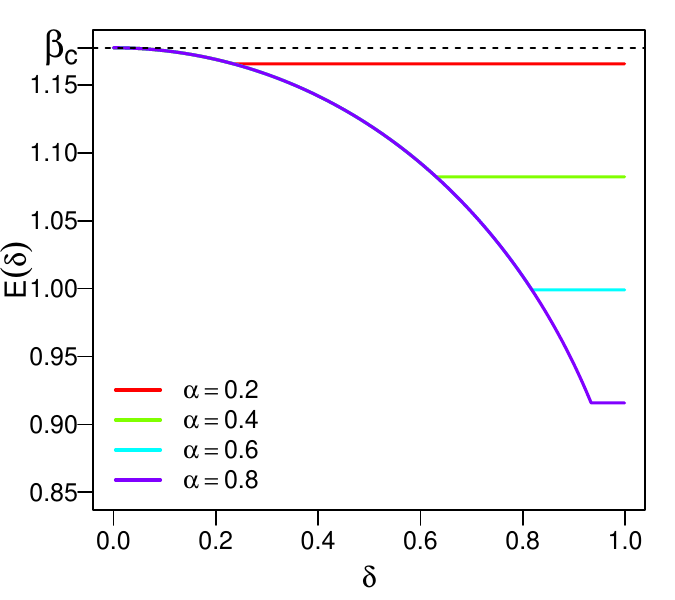}
    \includegraphics[width=0.4\textwidth]{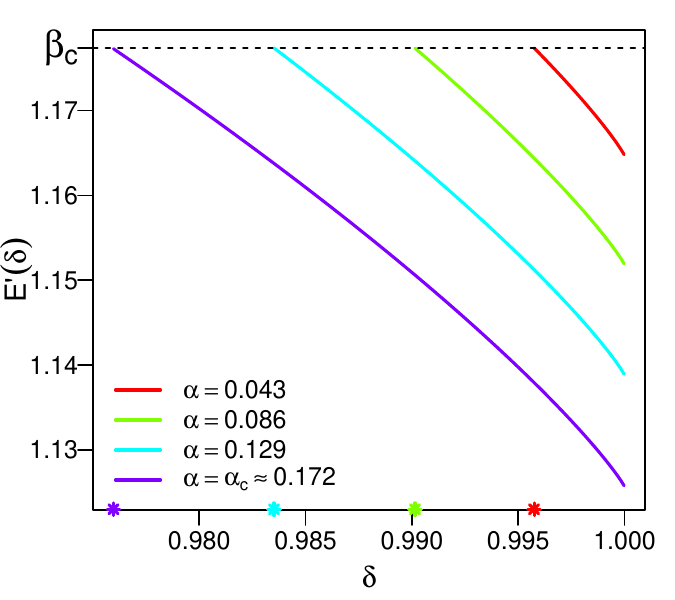}
     \caption{(Left) A sketch of $E(\delta)$ for $0<\delta<1$ when $\alpha>\alpha_*$. As $\delta$ increases the value of $E(\delta)$ will first decrease along the purple curve and then move horizontally to the right depending on the epistasis parameter. (Right) A sketch of $E'(\delta)$ when $0<\alpha\leq \alpha_*$. Note that $E'(\delta)$ is defined only on $(\delta_*,1)$ for $\delta_*=h^{-1}(2/(2-\alpha))$, which is marked as colored points on the $x$-axis for the corresponding $\alpha$ values.}
      \label{figure-E-delta}
\end{figure}

Our next theorem proves analogous results of Theorems \ref{thm:Q-high-epi} and \ref{thm:nearfittest-highepi} in the low-epistasis regime $\alpha\in (0,\alpha_*]$. To state the result, we first define \begin{align*}
	c_1(\alpha)&:=\frac{1}{2}\bigl(1-\alpha-\Delta(\alpha)\bigr)>0,\\
	c_2(\alpha)&:=\frac{1}{2}\bigl(1-\alpha+\Delta(\alpha)\bigr)<1-\alpha,
\end{align*}
where $\Delta(\alpha)=\sqrt{\alpha^2-6\alpha+1}$.

\begin{theorem}\label{thm:near-fittest-lowepi}
	Assume $0<\alpha\le\alpha_*$. If $\beta<\beta_c,$ \eqref{eqn:Q-high-epi-hightemp} holds
	and if $\beta>\beta_c,$ we have that for any $0<\delta<c_1(\alpha),$
	\begin{align}\label{thme.1:eq0} 
		\limsup_{N\to\infty}\e\bigl\la I(Q_{1,2}\in (0, c_1(\alpha)-\delta] \cup[c_2(\alpha)+\delta,1)\bigr\ra_\beta=0.
	\end{align}
	Additionally, we have the following statements.
	\begin{enumerate}
		\item For any $0<\delta<c_1(\alpha),$ we have
		\begin{align}
			\begin{split}
				\label{thm:low-epi:eq1} 
				&\limsup_{N\to\infty}\e\overlineit{M}_{N,K}(\{Q_{1,2} \in (0, c_1(\alpha)-\delta] \cup[c_2(\alpha)+\delta,1)\})\\
				&\leq 2\lim_{N\to\infty}\e M_{N,K}-2\beta_c\Bigl(1-\sqrt{1-\frac{\delta(\Delta(\alpha)+\delta)}{2}}\Bigr).
			\end{split}
		\end{align}

		\item For any $0<\delta<1$ with $h(\delta)<2/(2-\alpha)$, we have
		\begin{align}
			\begin{split}\label{thm3:eq3}
    &\limsup_{N\to\infty}\e\overlineit{M}_{N,K}(\{\delta<|R_{1,2}|<1\})\\
				&\quad \leq 2\lim_{N\to\infty}\e M_{N,K}-2\beta_c\Bigl(1-\sqrt{\frac{2-\alpha}{2}h(\delta)}\Bigr).
			\end{split}
		\end{align}
	\end{enumerate}
\end{theorem}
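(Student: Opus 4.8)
The proof rests on two pillars: a combinatorial count of genome pairs organized by their overlaps, and Gaussian moment estimates using that $H_{N,K}$ is a centred Gaussian process with $\e(H_{N,K}(\sigma^1)+H_{N,K}(\sigma^2))^2=2N(1+Q_{1,2})$. First I would isolate the entropy lemma. Writing $\tau_i=I(\sigma^1_i=\sigma^2_i)$, one has $NQ_{1,2}=\#\{i:\tau_i=\dots=\tau_{i+K}=1\}$, so a positive epistatic overlap $Q_{1,2}\approx q$ is produced most economically by forcing the genomes to agree on one run of $\approx(q+\alpha)N$ consecutive loci and leaving the other $\approx(1-q-\alpha)N$ loci free. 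Counting this way gives $\frac1N\ln\#\{(\sigma^1,\sigma^2):Q_{1,2}\approx q\}\to s(q):=(2-\alpha-q)\ln2$, and, imposing in addition a prescribed Hamming agreement (hence a value of $|R_{1,2}|$), the joint rate $s(q,r)=\ln2+(1-q-\alpha)\,\eta(\rho)$, where $\rho$ is the agreement density induced on the free part and $\eta$ the binary entropy in nats. The one technical point is the matching upper bound, i.e. that splitting the agreement into several runs never increases entropy; a transfer-matrix or run-length large-deviation estimate handles this.

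Given the entropy, the two coupled-maximum bounds \eqref{thm:low-epi:eq1} and \eqref{thm3:eq3} follow from a first-moment bound. Slicing by $q$ and using $\p((H_{N,K}(\sigma^1)+H_{N,K}(\sigma^2))/N\ge2e)\le e^{-Ne^2/(1+q)}$, the mean number of pairs with $Q_{1,2}\approx q$ above energy $2e$ is $e^{N(s(q)-e^2/(1+q))}$; hence $\overlineit{M}_{N,K}(\{Q_{1,2}\approx q\})\le 2\sqrt{(1+q)s(q)}=2\beta_c\sqrt{g(q)/2}$ with $g(q):=(1+q)(2-\alpha-q)$. For \eqref{thm:low-epi:eq1} I maximize $g$ over the forbidden set: since $c_1(\alpha),c_2(\alpha)$ are precisely the roots of $g=2$ (indeed $c_1+c_2=1-\alpha$, $c_1c_2=\alpha$) and $g$ is a downward parabola peaking inside $(c_1,c_2)$, the supremum on $(0,c_1-\delta]\cup[c_2+\delta,1)$ is at the inner endpoints, where the identity $g(c_1-\delta)=2-\delta(\Delta(\alpha)+\delta)$ gives exactly the stated right-hand side. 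For \eqref{thm3:eq3} the same first moment with the joint rate reduces matters to maximizing $(1+q)s(q,r)$ over feasible $q$ at $|R_{1,2}|=\delta$; the maximizer is interior and the optimal value is $(2-\alpha)\,h(\delta)\ln2$, whence $2\sqrt{(1+q)s(q,r)}=2\beta_c\sqrt{\tfrac{2-\alpha}{2}h(\delta)}$, the hypothesis $h(\delta)<2/(2-\alpha)$ being exactly what keeps this below the trivial bound $2\beta_c$.

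The Gibbs-measure statements I would obtain by comparing the local free energy $\Phi(q,\beta):=\lim\frac1N\ln\sum_{Q_{1,2}\approx q}e^{\beta(H_{N,K}(\sigma^1)+H_{N,K}(\sigma^2))}$ with $2\lim\e F_{N,K}(\beta)$, supplied by Theorem \ref{thm:Fnk-Mnk}. For the low-temperature gap \eqref{thme.1:eq0} ($\beta>\beta_c$), the forbidden set lies where $g(q)<2$, i.e. $t(q):=\sqrt{(1+q)s(q)}<\beta_c$; an energy-layered (truncated) first moment gives $\Phi(q,\beta)\le\max_{e\le t(q)}\{s(q)-e^2/(1+q)+2\beta e\}$, which freezes at $e=t(q)$ because $\beta>\beta_c>t(q)/(1+q)$, so $\Phi(q,\beta)\le 2\beta t(q)<2\beta\beta_c=2\lim\e F_{N,K}$ and the Gibbs mass on the gap is $e^{-\Omega(N)}$. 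The high-temperature case \eqref{eqn:Q-high-epi-hightemp} ($\beta<\beta_c$) is the main obstacle. There the single-scale bound is too weak: on the band $[c_1,c_2]$ (and near $q=1$) one has $t(q)\ge\beta_c$, and the annealed local free energy $s(q)+\beta^2(1+q)$ can exceed $2\lim\e F_{N,K}=2\ln2+\beta^2$. The remedy is the shared-block structure: on the forced agreement block the two Hamiltonians coincide, so their common part enters the sum with coefficient $2$, i.e. at inverse temperature $2\beta$, while the two free remainders contribute independently at inverse temperature $\beta$. Thus $\Phi(q,\beta)$ is really a sum of a shared REM on $\approx(q+\alpha)N$ sites at temperature $2\beta$ and two independent REMs on $\approx(1-q-\alpha)N$ sites at temperature $\beta$; because the shared block freezes as soon as $2\beta\ge\beta_c$, its contribution is suppressed below the annealed value, and a two-scale truncated-moment estimate yields $\Phi(q,\beta)<2\lim\e F_{N,K}$ for every $q>0$, with the deficit vanishing only as $q\to0$. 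This forces the Gibbs measure onto $\{Q_{1,2}=0\}$.

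Finally I would upgrade all in-expectation statements to high probability via the Gaussian concentration of $F_{N,K}$, $M_{N,K}$ and $\overlineit{M}_{N,K}(S)$ (Lemma \ref{lem3}), absorbing the arbitrarily small errors and passing between $\limsup\e$ and almost-sure bounds. The two genuinely hard steps are the matching upper bound in the entropy lemma and, above all, showing that the two-scale expression is a valid upper bound for $\Phi(q,\beta)$ — in particular controlling the $O(\alpha N)$ boundary windows that couple the shared block to the free remainders — which is what makes the high-temperature localization \eqref{eqn:Q-high-epi-hightemp} on the band $[c_1,c_2]$ work.
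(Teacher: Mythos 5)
Most of your proposal tracks the paper's actual argument closely. For the $\beta>\beta_c$ statements \eqref{thme.1:eq0} and \eqref{thm:low-epi:eq1}, the paper uses exactly your ingredients: the run-length entropy bound $|\{\sigma: NQ(\sigma)=l\}|\le N2^{N-(K+l)}$ (Lemma \ref{lem2}), the Gaussian maximum bound of Lemma \ref{main:proof:lem2} applied at variance $2N(1+t)$, and the observation that $c_1(\alpha),c_2(\alpha)$ are the roots of $(1+t)(2-\alpha-t)=2$, with $g(c_1-\delta)=2-\delta(\Delta(\alpha)+\delta)$ giving the stated constant. Your treatment of \eqref{thm3:eq3} differs slightly in bookkeeping --- the paper does not optimize a joint rate $(1+q)s(q,r)$ but simply splits into $\{Q_{1,2}>c_2(\alpha)+\delta'\}$ (controlled by part 1) and $\{Q_{1,2}\le c_2(\alpha)+\delta'\}$, on which it uses the crude entropy bound $2^{Nh(|r|)}$ and the variance bound $2N(1+c_2(\alpha)+\delta')$, then sends $\delta'\uparrow c_1(\alpha)$ so that $1+c_2+\delta'\to 2-\alpha$; this avoids having to justify your joint entropy formula and the claim that its maximizer yields exactly $(2-\alpha)h(\delta)\ln 2$.

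The genuine gap is the high-temperature claim \eqref{eqn:Q-high-epi-hightemp} for $\beta<\beta_c$. You correctly diagnose that the single-scale annealed bound fails on the band $[c_1(\alpha),c_2(\alpha)]$, but the two-scale ``shared REM at $2\beta$ plus two free REMs at $\beta$'' decomposition you propose as a remedy is left entirely unexecuted, and it is not clear it can be carried out as described: the agreement set realizing a given $Q_{1,2}$ need not be a single run, the restricted sums over the ``free'' part are not independent REMs across configurations, and the boundary windows you flag are only one of the difficulties. The paper avoids all of this with a one-line argument you have missed: Gaussian integration by parts gives $\beta(1-\e\la Q_{1,2}\ra_\beta)=\e F_{N,K}'(\beta)$, and since $\e F_{N,K}(\beta)$ is convex with a differentiable limit (Theorem \ref{thm:Fnk-Mnk}), Griffith's lemma yields $\e F_{N,K}'(\beta)\to\beta$ for $\beta\le\beta_c$, hence $\e\la Q_{1,2}\ra_\beta\to 0$, which is then upgraded to concentration at $Q_{1,2}=0$. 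So while your plan for the $\beta>\beta_c$ parts is essentially complete, the part you yourself identify as ``above all'' the hard step is precisely where your route would require substantial new work that the paper's free-energy-derivative argument renders unnecessary.
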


This result shows that when $\alpha<\alpha_*$, the epistatic overlap $Q$ can only be supported on the set $\{0\}\cup[c_1(\alpha),c_2(\alpha)]\cup\{1\}$ when $\beta>\beta_c$ and the same holds for the near-fittest genomes. 
Analogously to \eqref{eqn:nearfittest-highepi-Q}, \eqref{thm3:eq3} provides a quantitative characterization of the overlaps for near-fittest genomes in the low-epistasis regime: for any $\delta>\delta_*$ with  $\delta_\ast = \delta_\ast(\alpha)= h^{-1}(2/(2-\alpha))$, if the fitness values of two genomes $\sigma^1\ne \sigma^2$ are at least 
    \begin{align}\label{energylevel:eq2}
    E'(\delta):=\lim_{N\to\infty}\e M_{N,K}-\beta_c\Bigl(1-\sqrt{\frac{2-\alpha}{2}h(\delta)}\Bigr),
    \end{align}
	then their overlap $|R_{1,2}|<\delta$. The structure of the near-fittest genomes and the behavior of $E'(\delta)$ in the $\alpha<\alpha_*$ regime is sketched in Figure \ref{figure1}(b) and Figure \ref{figure-E-delta} (Right), respectively. Similar to the high-epistasis regime, we see that as $\delta \to \delta_\ast$, $E'(\delta)$ approaches the optimal fitness and thus,  there are no near-fittest evolutionary paths that involve no more than $\eta N$ mutations at every step whenever $0<\eta<(1-\delta_*)/2$.

\begin{remark}
	\rm Overlap gap properties have also been found in many important spin glass models (see, e.g., \cite{auffinger2018energy, chen2019suboptimality, chen2018energy, chen2018disorder}). Similar to the evolutionary barrier we explained above, it was shown to be a fundamental factor in determining the computational hardness of various related optimization problems (see, e.g., \cite{gamarnik2021overlap, huang2025tight}). 
\end{remark}

\subsubsection{Near-Fittest Evolutionary Paths}

Note that in the low-epistasis regime, $\delta_\ast\to 1$ as $\alpha\to 0$. Theorem \ref{thm:near-fittest-lowepi} suggests that  it is possible to have two genomes with fitness level greater than $E'(\delta)$ with an overlap in the range $(0, \delta)$ when $\delta>\delta_\ast$, which is almost the entire interval $(0,1)$ as long as $\alpha$ is small enough. The existence of the evolutionary paths would, in particular, rely on whether the overlap can take values as close as possible to $1$. 
Our final result shows that this is indeed the case and provides an affirmative answer to Evan's question.

Let $n\ge 10$ be an integer and $k=\lfloor N/(n+1)\rfloor$. Consider any two genomes $\hat\sigma \ne \check \sigma$ and an evolutionary path of $n$ steps
\begin{align}
\label{path}
\sigma^{(0)}=\hat \sigma,\, \sigma^{(1)},\, \sigma^{(2)},\, \ldots, \sigma^{(n-1)}, \,\sigma^{(n)}=\check\sigma, 
\end{align}
where for $1\leq l\leq n-1,$
\begin{align}\label{def:bridge}
\sigma_i^{(l)}=\left\{
\begin{array}{ll}
\hat \sigma_i & \text{if } i \ge lk \\
\\
\check \sigma_i & \text{if } i < lk,
\end{array}
\right.
\end{align}
that is, for the steps $l=0, 1,\ldots, n-2,$ $\sigma^{(l)}\to \sigma^{(l+1)}$, the genes on the set of loci $I_l=\{lk, lk+1, \ldots, (l+1)k-1\}$ are updated to be the same genotypes as the corresponding loci in $\check \sigma$, and in the last step from $\sigma^{(n-1)}$ to $\sigma^{(n)},$ the updates occur only at the set of loci $I_{n-1}=\{(n-1)k,(n-1)k+1,\ldots,N-1\}$. Our next result shows that if $\hat \sigma$ and $\check \sigma$ are near-fittest, then this evolutionary path \eqref{path} is near-fittest for sufficiently small $\alpha$.

\begin{theorem}\label{thm:near-fittest-path} 
Let $n\geq 10$ and $\eta\in (0,1)$. For any $\alpha$ with
\begin{align}\label{bridge:eq14}
    0<\alpha<\frac{\eta}{5\sqrt{2\ln 2}},
\end{align}
there exists some $N_0$ depending on $\eta,\alpha$ such that for any $N\geq N_0,$ with probability at least $1-\omega e^{-\eta^2N/(n^2\omega)}$, whenever $\check\sigma$ and $\hat\sigma$ satisfy
\begin{align}\label{bridge:eq13}
		\min\Bigl(\frac{H_{N,K}(\hat \sigma)}{N},\frac{H_{N,K}(\check \sigma)}{N}\Bigr)\geq M_{N,K}-\eta,
	\end{align}
we have that
	\begin{align}
		\nonumber   \min_{0\leq l\leq n-1} Q(\sigma^{(l)},\sigma^{(l+1)})&\geq 1-C\Bigl(\frac{1}{n}+\eta\Bigr),\\
		 \min_{0\leq l\leq n-1} R(\sigma^{(l)},\sigma^{(l+1)})&\geq 1-\frac{C}{n},\\
        \label{thm:near-fittest-path:eq11} 
        \min_{1\leq l\leq n-1}\frac{H_{N,K}(\sigma^{(l)})}{N}&\geq M_{N,K}-Cn\eta.
	\end{align}
    Here, $\omega>0$ and $C>0$ are absolute constants independent of all other variables.
\end{theorem}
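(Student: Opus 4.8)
My plan separates the three conclusions into a deterministic part (the overlaps) and a probabilistic part (the fitness), and treats them by very different means. For the overlaps, note that $\sigma^{(l)}$ and $\sigma^{(l+1)}$ differ only on the block $I_l$, of size at most $2k\le 2N/(n+1)$. Since $R$ counts sign disagreements, $R(\sigma^{(l)},\sigma^{(l+1)})\ge 1-2|I_l|/N\ge 1-C/n$ with no probabilistic input. For $Q$, a window $[i,i+K]$ produces a mismatch only if it meets $I_l$ at a disagreeing locus; as $I_l$ is an interval, at most $|I_l|+K$ windows qualify, so $Q(\sigma^{(l)},\sigma^{(l+1)})\ge 1-(|I_l|+K)/N\ge 1-C/n-\alpha$, and since $\alpha<\eta/(5\sqrt{2\ln2})<\eta$ this is $\ge 1-C(1/n+\eta)$. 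Both hold for every pair $\hat\sigma,\check\sigma$.

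The fitness bound is the substance, and the key idea is an almost-additive block decomposition. Partition $\{0,\dots,N-1\}$ into the path blocks $I_0,\dots,I_{n-1}$ and write $H_{N,K}(\sigma)=\sum_{j=0}^{n-1}P_j(\sigma)+B(\sigma)$, where $P_j(\sigma)$ sums $X_i(\sigma)$ over windows $[i,i+K]$ contained in $I_j$ and $B(\sigma)$ collects the windows straddling a block boundary. In the regime where the claim is nontrivial one has $K<k$ (I handle the complement at the end), so each $P_j$ is nonempty and depends only on the spins in $I_j$; hence the $P_j$ are independent across $j$ and are simultaneously maximized by one configuration $\sigma^{\#}$ with $P_j(\sigma^{\#})=P_j^{\ast}:=\max_\sigma P_j(\sigma)$. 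Because the interface of $\sigma^{(l)}$ sits exactly at the block boundary $lk$, no block is split, so $P_j(\sigma^{(l)})=P_j(\check\sigma)$ for $j<l$ and $P_j(\sigma^{(l)})=P_j(\hat\sigma)$ for $j\ge l$, giving
\begin{align*}
H_{N,K}(\sigma^{(l)})=\sum_{j<l}P_j(\check\sigma)+\sum_{j\ge l}P_j(\hat\sigma)+B(\sigma^{(l)}).
\end{align*}
I would then use that a near-fittest genome is near-optimal in every block. With $B_{\max}:=\max_{\sigma}|B(\sigma)|$, applying the definition of $M_{N,K}$ to $\sigma^{\#}$ gives $\sum_j P_j^{\ast}\le M_{N,K}N+B_{\max}$, while $\sum_j P_j(\check\sigma)=H_{N,K}(\check\sigma)-B(\check\sigma)\ge (M_{N,K}-\eta)N-B_{\max}$; subtracting, the total block deficit obeys $\sum_j(P_j^{\ast}-P_j(\check\sigma))\le \eta N+2B_{\max}$, and likewise for $\hat\sigma$. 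Since each $P_j^{\ast}-P_j\ge 0$, the partial deficits along the splice are dominated by the full ones, so
\begin{align*}
H_{N,K}(\sigma^{(l)})\ge \sum_j P_j^{\ast}-2(\eta N+2B_{\max})-B_{\max}\ge M_{N,K}N-3\eta N-6B_{\max}.
\end{align*}
Everything thus reduces to showing $B_{\max}\le n\eta N$ with high probability.

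This last step is the main obstacle, and it is exactly where $\alpha<\eta/(5\sqrt{2\ln2})$ is consumed. For fixed $\hat\sigma,\check\sigma$, $B(\sigma)$ is a sum of at most $nK$ independent standard Gaussians, so $\mathrm{Var}\le nK\le n\alpha N$; but I need uniformity over all endpoints. The point is that $B$ depends only on the $\le 2nK$ spins within distance $K$ of a block boundary, so a union bound over the $\le 2^{2nK}$ relevant patterns with the Gaussian tail yields
\begin{align*}
\p\bigl(B_{\max}>n\eta N\bigr)\le 2\exp\Bigl(2nK\ln2-\tfrac{(n\eta N)^2}{2nK}\Bigr).
\end{align*}
Inserting $K\le\alpha N$, $\alpha<\eta/(5\sqrt{2\ln2})$ and the identity $\ln2/\sqrt{2\ln2}=\tfrac12\sqrt{2\ln2}$ bounds the entropy term by $\tfrac15\sqrt{2\ln2}\,n\eta N$ and the Gaussian term below by $\tfrac52\sqrt{2\ln2}\,n\eta N$, so the exponent is at most $-c\,n\eta N$. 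This gives the high-probability statement with a rate stronger than $\eta^2N/(n^2\omega)$, which then follows a fortiori; combined with the displayed inequality it produces the fitness bound with a universal $C$. The identity above is precisely what fixes the threshold constant $5\sqrt{2\ln2}$, which I regard as the crux of the argument.

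Finally, in the complementary regime $K\ge k$ (i.e.\ $\alpha\gtrsim 1/(n+1)$), where the decomposition degenerates, the constraint $1/(n+1)\lesssim\alpha<\eta/(5\sqrt{2\ln2})$ forces $n\eta>2\sqrt{2\ln2}$, so the target loss $Cn\eta$ already exceeds the full fitness range: since $-H_{N,K}$ has the same law as $H_{N,K}$, one has $\min_\sigma H_{N,K}(\sigma)/N\ge -M_{N,K}-o(1)\ge M_{N,K}-2\beta_c-o(1)$ with high probability by the REM-type concentration underlying Theorem~\ref{thm:Fnk-Mnk}, and the bound holds trivially for $C$ large. The only remaining care is to absorb floor-function rounding and the larger last block $|I_{n-1}|\approx 2k$ into the constants, which is what forces the threshold $N_0(\eta,\alpha)$.
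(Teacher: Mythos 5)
Your proposal is correct, and for the fitness bound \eqref{thm:near-fittest-path:eq11} it takes a genuinely different route from the paper. The paper splits each $\sigma^{(l)}$ into only \emph{two} pieces ($I_{\leq l-1}$ and its complement) and invokes Lemma \ref{bridge:lem1}, which asserts that a near-fittest genome is near-optimal on every macroscopic sub-block with deficit $4\eta/c$; that lemma needs Slepian monotonicity (Lemma \ref{monotonicity}) and, crucially, Theorem \ref{thm:Fnk-Mnk} to identify the ground-state energies of the sub-block $\NK$ models with $\beta_c$ so that they can be compared to $M_{N,K}$. You instead use the full $n$-block decomposition $H_{N,K}=\sum_j P_j+B$ together with the simultaneous block-maximizer $\sigma^{\#}$ (legitimate since the $P_j$ depend on disjoint spin sets when $K<k$); this converts the global near-optimality of $\hat\sigma$ and $\check\sigma$ into a bound $\eta N+2B_{\max}$ on the \emph{total} block deficit, and nonnegativity of the individual deficits then controls every partial sum along the splice. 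The only probabilistic input is the union bound $B_{\max}\leq n\eta N$, whose entropy-versus-variance trade-off is exactly where the hypothesis $\alpha<\eta/(5\sqrt{2\ln 2})$ is spent --- the same place the paper spends it on its interface terms $E_N^1,E_N^2$. What each approach buys: yours is more elementary and self-contained, needing Theorem \ref{thm:Fnk-Mnk} only in the degenerate regime $K\geq k$ (which you correctly dispose of by noting that there $n\eta\gtrsim\sqrt{2\ln 2}$ makes the claim vacuous against the full fitness range); the paper's route produces the reusable restriction statement Lemma \ref{bridge:lem1}, of independent interest. Both lose the same factor $n$ in the deficit, and your failure probability $e^{-cn\eta N}$ is well below the required $\omega e^{-\eta^2N/(n^2\omega)}$. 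Two cosmetic points: your intermediate display actually gives $M_{N,K}N-2\eta N-6B_{\max}$ rather than $-3\eta N$ (which only strengthens your bound), and the last block satisfies $|I_{n-1}|\approx 2k$ rather than $\leq 2k$ exactly, a discrepancy absorbed into $C$ as you indicate.
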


 We emphasize that under \eqref{bridge:eq14}, Theorem \ref{thm:peaks} still ensures the existence of exponentially many near-fittest genomes that are approximately orthogonal to each other since $\alpha$ is strictly positive. Now for any fixed number of steps $n,$ as $\eta$ is allowed to be as small as one wishes, Theorem~\ref{thm:near-fittest-path} further implies that all near-fittest genomes are essentially interconnected by the near-fittest evolutionary paths \eqref{path}, in which each generation involves only no more than $\lfloor N/n\rfloor$ mutations; see Figure \ref{figure1}(c). We add that Theorem \ref{thm:near-fittest-path} indeed also holds disregard the ordering of updates on the sets of loci, $I_1,\ldots,I_{n-1}$.

\begin{remark}
    \rm As long as $n$ is large enough, all overlap values become achievable along the path \eqref{path} in the additional limit as $\alpha \to 0$, that is, for any $t\in [0,1],$
\begin{align*}
	\limsup_{\varepsilon\downarrow 0}\limsup_{\alpha\downarrow 0}\limsup_{N\to\infty}\e\overlineit{M}_{N,K}(\{|Q_{1,2}-t|<\varepsilon\})=2\lim_{N\to\infty} \e M_{N,K},\\
	\limsup_{\varepsilon\downarrow 0}\limsup_{\alpha\downarrow 0}\limsup_{N\to\infty}\e\overlineit{M}_{N,K}(\{|R_{1,2}-t|<\varepsilon\})=2\lim_{N\to\infty} \e M_{N,K}.
\end{align*}
This behavior is in contrast to the overlap gap property described earlier for $\alpha\in(0,1]$ fixed.
\end{remark}

\subsection{Open Problems}

While we investigate the $\NK$ model in the regime $K/N\to\alpha \in (0,1]$ using methodologies from spin glass theory, several important problems remain open both within this regime and beyond. Below, we outline some of these open problems.

\begin{enumerate}
	
	\item Let $K$ be a fixed positive integer. What is the limiting free energy? What is the behavior of the overlap at any temperature and at the maximal fitness? Notably in this regime, the maximum fitness has been well-studied by the Furstenberg–Kifer theory, see \cite{ES02}. We anticipate that similar approach can be used to understand the limiting free energy, but the study for the overlaps requires  new ideas.
	
	\item Understand the questions above under the assumption, $K\to\infty$ and $K/N\to 0$ as $N\to\infty$.  Does Chatterjee's result \cite{chatterjee2008chaos} elaborated in Remark \ref{rmk1.3} remain true for overlap $R(\sigma,\sigma')$ instead? If yes, we will see that the number of near-fittest and orthogonal genomes diverges. In this case, does the near-fittest path in Theorem \ref{thm:near-fittest-path} still exist? 
	
	\item Suppose $K/N\to\alpha\in (0,1].$ Our formulation of the $\NK$ model assumes that the fitness components are normal. Can we extend the results to other distributions?
\end{enumerate}

\subsection{Structure of the Paper}

Theorem \ref{thm:Fnk-Mnk} is the most crucial ingredient throughout this paper, providing the basis for all subsequent results.
As mentioned before, the $\NK$ model with $\alpha=1$ corresponds to the REM and it was well-known that the second moment method allows to obtain the same formulas in Theorem \ref{thm:Fnk-Mnk}, see, e.g., \cite{bovier2006statistical}. In light of this, it is tempting to apply the same approach to the $\NK$ model. However, while the details will be carried out in Section \ref{sec:mmomentmethod}, our analysis indicates that the moment method seems to work only in the regime $[\alpha_*,1]$, see Remark \ref{remark:momentmethod} below. In Section \ref{sec6}, we establish Theorem \ref{thm:Fnk-Mnk} for the entire regime $\alpha\in (0,1]$ by an approximation argument that makes use of a $p$-spin variant of the $\NK$ model, called the $p$-spin $\NK$ model, and a balanced multi-species model. The proofs of the overlap gap properties stated in Theorem \ref{thm:Q-high-epi}, Corollary \ref{cor1.1}, and Theorem \ref{thm:near-fittest-lowepi} are carried out in Section \ref{Sec3} followed by the proof for the multiple peak property in Section \ref{sec:MPP} and the existence of near-fittest paths in Section \ref{sec:NFP}.

\section{Moment Method}\label{sec:mmomentmethod}

In this section, we will employ the second moment approach to computing the limiting free energy in the $N\!K$ model. Our result below shows that it can be explicitly obtained at any temperature if $\alpha\in [\alpha_*,1]$. However, when $0<\alpha<\alpha_*$, it can only be achieved in part of the high temperature regime depending on $\alpha.$

\begin{theorem}\label{thm2}
	The following limits hold.
	\begin{itemize}
		\item[$(i)$] If $\alpha_*\leq \alpha\leq 1,$ then
		\begin{align*}
			\lim_{N\to\infty}\e F_{N,K}(\beta)&= \left\{\begin{array}{ll}
				\ln 2+\frac{\beta^2}{2},&\forall \beta<\beta_c,\\
				\\
				\beta \beta_c,&\forall\beta \geq \beta_c.
			\end{array}\right.
		\end{align*}
		\item[$(ii)$] If $0<\alpha<\alpha_*$, then
		\begin{align*}
			\lim_{N\to\infty}\e F_{N,K}(\beta)&= \ln 2+\frac{\beta^2}{2},\,\,\forall 0<\beta<\sqrt{\ln 2}(1+\sqrt{\alpha}).
		\end{align*}
	\end{itemize}
\end{theorem}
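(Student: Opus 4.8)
The plan is to sandwich the free energy between a first-moment upper bound and a (truncated) second-moment lower bound, and then pass from $\ln Z_{N,K}$ to $\e F_{N,K}$ by Gaussian concentration. Since $\e H_{N,K}(\sigma)^2 = NQ(\sigma,\sigma)=N$ for every $\sigma$, the annealed partition function is $\e Z_{N,K}(\beta)=2^N e^{\beta^2N/2}$, so Jensen's inequality immediately gives $\e F_{N,K}(\beta)\le \ln 2+\beta^2/2$ for all $\beta$ and all $\alpha$. For the lower bound I would apply the second-moment method to the energy-truncated partition function $Z^{\le m}:=\sum_\sigma e^{\beta H_{N,K}(\sigma)}I(H_{N,K}(\sigma)\le mN)$. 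As $F_{N,K}$ is a $\beta/\sqrt N$-Lipschitz function of the i.i.d.\ Gaussian disorder (each $H_{N,K}(\sigma)$ depends on $N$ coordinates with unit coefficients), Gaussian concentration gives $\p(|F_{N,K}-\e F_{N,K}|>t)\le 2e^{-Nt^2/(2\beta^2)}$; hence it suffices to show that with probability bounded away from $0$ one has $\tfrac1N\ln Z_{N,K}\ge \ln2+\beta^2/2-o(1)$, which a Paley--Zygmund argument delivers once $\e(Z^{\le m})^2=(1+o(1))(\e Z^{\le m})^2$ and $\e Z^{\le m}=(1+o(1))\e Z_{N,K}$.

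The crux is the second moment. Using $\e H_{N,K}(\sigma^1)H_{N,K}(\sigma^2)=NQ_{1,2}$, the untruncated ratio is $\e(Z_{N,K})^2/(\e Z_{N,K})^2=2^{-2N}\sum_{\sigma^1,\sigma^2}e^{\beta^2NQ_{1,2}}$, so everything hinges on the large-deviation rate of the epistatic overlap under the uniform measure. Writing $\xi_i:=I(\sigma^1_i=\sigma^2_i)$, the $\xi_i$ are i.i.d.\ $\mathrm{Bernoulli}(1/2)$ and $NQ_{1,2}=\sum_i\prod_{j=0}^K\xi_{i+j}$ counts the loci that start a run of at least $K+1\approx\alpha N$ consecutive agreements. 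A run-length analysis then shows the support of $Q_{1,2}$ is essentially $\{0\}\cup(0,1-\alpha]\cup\{1\}$ (a maximal agreement run of length $L$ contributes $(L-K)_+$, so a single disagreement already caps $Q_{1,2}$ at $\approx 1-\alpha$), and that the cheapest way to realize $Q_{1,2}\approx q\in(0,1-\alpha]$ is a single run of length $Nq+K$, giving $\tfrac1N\ln\p(Q_{1,2}\approx q)\to-(q+\alpha)\ln2$. The exponential rate of the untruncated ratio is thus $\max\!\big(0,\ \sup_{q\in(0,1-\alpha]}[\beta^2q-(q+\alpha)\ln2],\ \beta^2-\ln2\big)$, and the $q=1$ term $\beta^2-\ln2$ shows the bare second moment already breaks down at $\beta=\sqrt{\ln2}$ --- precisely what the truncation must repair.

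I would truncate at the tightest level preserving the first moment, $m=\beta$ (the tilted mean of $H_{N,K}(\sigma)/N$), so that $\e Z^{\le m}=(1+o(1))\e Z_{N,K}$ for $\beta\le\beta_c$. For a pair with $Q_{1,2}=q$ the vector $(H^1,H^2)$ is bivariate Gaussian with unit variances, covariance $Nq$, and a Laplace evaluation of $\e[e^{\beta(H^1+H^2)}I(H^1,H^2\le mN)]$ at $m=\beta$ yields exponent $\beta^2(1+2q)/(1+q)$ (the box constraint is active). Combining this with the pair count $2^{N(2-q-\alpha)}$ and the normalization $e^{N(2\ln2+\beta^2)}$, the off-diagonal exponent becomes
\[
\Psi(q)=\frac{\beta^2q}{1+q}-(q+\alpha)\ln2,\qquad q\in(0,1-\alpha].
\]
Optimizing, $\Psi$ has an interior maximum at $q^*=\beta/\sqrt{\ln2}-1$ (present only for $\beta>\sqrt{\ln2}$) with value $\beta^2-2\sqrt{\ln2}\,\beta+(1-\alpha)\ln2$, which is $\le0$ precisely for $\sqrt{\ln2}(1-\sqrt\alpha)\le\beta\le\sqrt{\ln2}(1+\sqrt\alpha)$; when $q^*>1-\alpha$ the maximum sits at $q=1-\alpha$ and stays negative up to $\beta_c$. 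Since $\sqrt{\ln2}\,(1+\sqrt{\alpha_*})=\sqrt{\ln2}\cdot\sqrt2=\beta_c$, the off-diagonal threshold $\sqrt{\ln2}(1+\sqrt\alpha)$ exceeds $\beta_c$ exactly when $\alpha\ge\alpha_*$, while the truncation makes the $q=1$ diagonal contribute exponent $\beta^2/2-\ln2\le0$ for every $\beta\le\beta_c$. This gives $\e F_{N,K}\to\ln2+\beta^2/2$ for $\beta<\beta_c$ when $\alpha\ge\alpha_*$, and for $\beta<\sqrt{\ln2}(1+\sqrt\alpha)$ when $\alpha<\alpha_*$, establishing the high-temperature formulas.

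For the low-temperature branch $\beta\ge\beta_c$ of part $(i)$ I would first prove $\e M_{N,K}\to\beta_c$: the bound $\limsup\e M_{N,K}\le\beta_c$ is the union estimate $2^N\p(H_{N,K}\ge(\beta_c+\varepsilon)N)\to0$, while $\liminf\e M_{N,K}\ge\beta_c$ follows from the very same second-moment computation applied to the count $|\{\sigma:H_{N,K}(\sigma)/N\ge\beta_c-\varepsilon\}|$, whose off-diagonal exponent is again $\Psi$ with $\beta^2$ replaced by $(\beta_c-\varepsilon)^2$ and is controlled exactly when $\alpha\ge\alpha_*$. Granting this, the formula $\beta\beta_c$ comes from two monotone bounds: $F_{N,K}(\beta)\ge\beta M_{N,K}$ gives $\liminf\e F_{N,K}\ge\beta\beta_c$, and integrating $\partial_\beta F_{N,K}=\la H_{N,K}\ra_\beta/N\le M_{N,K}$ from $\beta_c$ gives $F_{N,K}(\beta)\le F_{N,K}(\beta_c)+(\beta-\beta_c)M_{N,K}$, so with $\e F_{N,K}(\beta_c)\to 2\ln2=\beta_c^2$ (continuity of the high-temperature formula at $\beta_c$) we get $\limsup\e F_{N,K}\le\beta\beta_c$. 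I expect the main obstacle to be the two intertwined large-deviation computations that pin down $\alpha_*$: rigorously establishing the rate $-(q+\alpha)\ln2$ for $Q_{1,2}$ (the run-length combinatorics, the degenerate gap $(1-\alpha,1)$ in the support, and the sharp constants), and choosing the truncation level $m=\beta$ so that the constrained bivariate moment produces exactly $\Psi(q)=\beta^2q/(1+q)-(q+\alpha)\ln2$ whose maximal value vanishes at $\beta=\sqrt{\ln2}(1+\sqrt\alpha)$. A looser truncation (say $m=\beta_c$) overestimates the second moment and yields a strictly smaller threshold, so it is the correct interaction between the truncation and the overlap rate that is delicate; the remaining ingredients (Jensen, Paley--Zygmund, Gaussian concentration, and the monotonicity bounds) are standard.
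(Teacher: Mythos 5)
Your proposal is correct and follows essentially the same route as the paper: both reduce the matching lower bound to a (Paley--Zygmund plus Gaussian concentration) second-moment computation whose off-diagonal exponent is exactly $\frac{\beta^2 q}{1+q}-(q+\alpha)\ln 2$ (the paper's $-f_s(t)$), driven by the same overlap large-deviation rate $-(q+\alpha)\ln 2$ from the run-length combinatorics (the paper's Lemma 2.1) and the same bivariate Gaussian tail exponent $s^2/(1+q)$, so the threshold $\sqrt{\ln 2}(1+\sqrt{\alpha})$ and the value $\alpha_*$ emerge identically. The remaining differences are cosmetic: the paper runs the second moment on the level-set count $L_N(s)$ and optimizes $\ln 2-s^2/2+\beta s$ over $s$, using the Li--Shao normal comparison inequality for the joint tail and Talagrand's Lemma 2.7 for the low-temperature upper bound, whereas you truncate the partition function at $m=\beta$, evaluate the constrained bivariate moment by tilting, and get the $\beta\geq\beta_c$ upper bound by integrating $\partial_\beta F_{N,K}\leq M_{N,K}$ from $\beta_c$ --- all standard and equivalent substitutions here.
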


For the rest of this section, we establish this theorem. For any $s\in \mathbb{R},$ define the cardinality of the level set as
\begin{align*}
	L_N(s)&=\bigl|\bigl\{\sigma:H_{N,K}(\sigma)\geq sN\bigr\}\bigr|.
\end{align*}
Then
\begin{align*}
	\e L_N(s)&=2^N\p(H_{N,K}(\mathbbm{1})\geq sN)=2^N\p(z\geq s\sqrt{N})=2^N\Phi(-s\sqrt{N}),
\end{align*}
where $\Phi$ is the cumulative distribution function of the standard normal random variable.
To control the second moment, write \begin{align*}
	&\e L_N(s)^2\\
	&=\sum_{\sigma^1,\sigma^2}\p\bigl(H_{N,K}(\sigma^1)\geq sN,H_{N,K}(\sigma^2)\geq sN\bigr)\\
	&=\sum_{l=0}^N\bigl|\{(\sigma^1,\sigma^2):Q(\sigma^1,\sigma^2)=l/N\}\bigr|\p\bigl(X_1(l/N)\geq s\sqrt{N},X_2(l/N)\geq s\sqrt{N}\bigr),
\end{align*} 
where for any $t\in [0,1]$, $(X_1(t),X_2(t))$ is a bivariate centered normal vector with $\e X_1(t)^2=\e X_2(t)^2=1$ and $\e X_1(t)X_2(t)=t.$ Consider the epistasis overlap of any genome with $\mathbbm{1}=(1,1,\ldots, 1)$, defined as 
\begin{align}
	\label{eqn:def-Qsigma}
	Q(\sigma):=\frac{1}{N}\sum_{i=0}^{N-1}I(\sigma_j=1,\,\,\forall i\leq j\leq i+K).
\end{align}
Observe that for any fixed $\sigma^1,$ the mapping $
\sigma^2\mapsto (\sigma_0^1\sigma_0^2,\ldots,\sigma_{N-1}^1\sigma_{N-1}^2)$
is a bijection so that 
$$\bigl|\{\sigma^2:Q(\sigma^1,\sigma^2)=t\}\bigr|=\bigl|\{\sigma:Q(\sigma)=t\}\bigr|$$ 
and thus,
\begin{align*}
	\bigl|\{(\sigma^1,\sigma^2):Q(\sigma^1,\sigma^2)=t\}\bigr|
	&=\sum_{\sigma^1}\big|\{\sigma^2:Q(\sigma^1,  \sigma^2)=t\}\bigr|\\
	&=\sum_{\sigma}\bigl|\{\sigma:Q(\sigma)=t\}\bigr|=2^N\bigl|\{\sigma:Q(\sigma)=t\}\bigr|.
\end{align*}
Therefore,
\begin{align}\label{momentmethod:eq1}
	\e L_N(s)^2
	&=2^N\sum_{l=0}^N\bigl|\{\sigma:Q(\sigma)=l/N\}\bigr|\p\bigl(X_1(l/N)\geq s\sqrt{N},X_2(l/N)\geq s\sqrt{N}\bigr).
\end{align} 
The next subsection establishes the bounds on the two quantities on the right-hand side of \eqref{momentmethod:eq1}.

\subsection{Some Preliminary Estimates}

\begin{lemma}\label{lem2}
	For any $\sigma\neq \mathbbm{1}$, we have
	\begin{align*}
		Q(\sigma)&\leq \frac{N-K-1}{N}.
	\end{align*}
	Also, for any $1\leq l\leq N-K-1$,
	\begin{align*}
		\bigl|\{\sigma\in \Sigma_N:NQ(\sigma)=l\}\bigr|\leq N2^{N-(K+l)}.
	\end{align*}
\end{lemma}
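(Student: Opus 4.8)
Both bounds are purely combinatorial, so I would argue directly about the cyclic block structure and ignore the Gaussian disorder entirely.

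For the first inequality I would use a one-line pigeonhole argument. If $\sigma \neq \mathbbm{1}$, fix a locus $j$ with $\sigma_j = -1$. The block starting at $i$ is the window $\{i, i+1, \ldots, i+K\}$ (indices mod $N$), and it contains the locus $j$ precisely when $i \in \{j-K, j-K+1, \ldots, j\}$. Since $K+1 \le N$ (because $K = \lfloor\alpha(N-1)\rfloor \le N-1$), these $K+1$ starting indices are distinct, and none of the corresponding blocks can be all-$+1$ since each contains $\sigma_j = -1$. Hence at most $N-(K+1)$ of the $N$ blocks are all-$+1$, i.e. $NQ(\sigma) \le N-K-1$, which is the claim.

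For the cardinality bound, the key structural observation is that the all-$+1$ blocks are exactly the length-$(K+1)$ windows contained in a maximal run of $+1$'s, and a maximal $+1$-run of length $\ell$ contains exactly $(\ell-K)^+$ such windows. Writing the maximal $+1$-runs of length $\ge K+1$ (the \emph{long runs}) as having lengths $\ell_1,\ldots,\ell_r$, this gives $l = NQ(\sigma) = \sum_{j=1}^r(\ell_j-K)$, so the long runs occupy exactly $\sum_j \ell_j = l+rK$ loci; since $l \le N-K-1$ forces $\sigma \ne \mathbbm{1}$, we have $r \ge 1$ and at least one $-1$ present, and each long run is immediately flanked by a $-1$ on each side. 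I would then count $A_l := \{\sigma : NQ(\sigma)=l\}$ by stratifying on $r$. Anchoring at the starting position of one long run serves two purposes at once: it cuts the cycle into a line (contributing the factor $N$) and splits $\sigma$ into an alternating concatenation of the $r$ long runs and $r$ inter-run segments, each of the latter beginning with its forced boundary $-1$ and containing no $+1$-run of length $\ge K+1$. A configuration in this stratum is determined by (i) a composition of $l+rK$ into $r$ parts $\ge K+1$, of which there are $\binom{l-1}{r-1}$; (ii) a composition of the remaining $N-l-rK$ loci into $r$ positive segment-lengths, of which there are $\binom{N-l-rK-1}{r-1}$; and (iii) the segment contents, bounded by $2^{(N-l-rK)-r}$ using the forced leading $-1$ of each segment. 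Correcting the anchor over-count by the factor $r$ (each configuration is anchored once per long run) yields
\[
|A_l| \;\le\; \sum_{r\ge 1}\frac{N}{r}\binom{l-1}{r-1}\binom{N-l-rK-1}{r-1}\,2^{\,N-l-rK-r},
\]
whose $r=1$ term is exactly $N\,2^{\,N-K-l-1} = \tfrac12 N\,2^{\,N-K-l}$.

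The whole difficulty lies in the tail $r\ge 2$. Each increment of $r$ costs a factor $2^{-(K+1)}$ while gaining only polynomial (in $N$) binomial factors; since $K=\lfloor\alpha(N-1)\rfloor$ grows linearly in $N$, the factor $2^{-K}$ dominates, so for $N$ large the entire $r\ge 2$ tail is at most $\tfrac12 N\,2^{\,N-K-l}$, and the two halves combine to the claimed $|A_l|\le N\,2^{\,N-K-l}$. This suppression step is the one I expect to be delicate: there is no clean injection deleting $K+l$ contiguous $+1$'s in the multi-run case, because a single anchor cannot record several independent run-lengths. Rather than an exact encoding, one must therefore show that configurations with two or more long runs are geometrically negligible, and this is precisely the point where the linear growth of $K$ is used.
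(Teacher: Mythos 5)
Your proposal is correct and follows essentially the same route as the paper: the first bound is the same pigeonhole on the $K+1$ windows through a fixed $-1$, and the cardinality bound is obtained, as in the paper, by stratifying over the number of maximal runs of at least $K+1$ consecutive $+1$'s, counting compositions of $l$ (giving the factor $\binom{l-1}{r-1}$), using the forced $-1$ after each run, and suppressing the multi-run tail by the geometric factor $2^{-K}$ per extra run with $K$ linear in $N$. Your bookkeeping (anchoring plus division by $r$, and the composition of the complement) is slightly sharper than the paper's cruder $\binom{N}{m}$ for the block starting positions, but the mechanism and the large-$N$ requirement are identical.
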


\begin{proof}
	Without loss of generality, we can assume that $\sigma_{N-1}=-1$ and 
    thus, $I(\sigma_j=1,\,\,\forall k\leq j\leq k+K)=0$ for all  $k=N-K-1, \ldots, N-1$ and thus, $Q(\sigma)\leq (N-K-1)/N.$ 
	
	For the second assertion, we note that the $i$-th summand in the definition of $Q(\sigma)$ is nonzero only if there is a block of at least $(K+1)$ many 1's, starting from $\sigma_i$. Thus for each $\sigma \in \{\pm 1\}^N$, we only need to consider the blocks consisting of at least $(K+1)$ consecutive $+1$'s (modulo $N$) -- see the gray blocks in Figure \ref{fig:fig3} where we represent a genome of length $N$ as a circle due to the periodic boundary condition. Denote the starting indices of these blocks by $i_1, i_2,\ldots, $ and the length of these blocks by $K+n_1, K+n_2, \ldots$, for $n_1, n_2, \cdots \ge 1$. With these notations, we can express.
	\[
	NQ(\sigma) = n_1 + n_2 + \cdots. 
	\]
	Therefore, for any $1\le l\le N-K-1$, we have
	\begin{align*}
		&\bigl|\{\sigma\in \Sigma_N:NQ(\sigma)=l\}\bigr|\\ &\qquad \qquad \le  2^N \sum_{m=1}^l \binom{N}{m} 2^{-m}\sum_{1\le n_1,\ldots, n_m \le l} \mathbbm{1}_{\left\{\sum _{i=1}^m n_i=l\right\}}\prod_{j=1}^m 2^{-K-n_j},
	\end{align*}
	where $m$ is the number of blocks of at least $K+1$ consecutive 1's, the factor $\binom{N}{m}$ bounds the number of choices for the indices $i_1, \ldots, i_m$, and the factor $2^{-m}$ is because the spin right after each block must take the value $-1$ (to terminate the block of consecutive 1's).  
    \begin{figure}[ht]
		\centering
\includegraphics[width=0.4\textwidth]{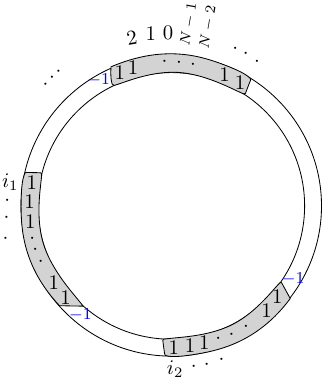}
		\caption{A sketch of the epistasis overlap structure with the genome $\mathbbm 1=(1,1,\ldots,1)$.}
        \label{fig:fig3}
	\end{figure}
	Since $n_1 + \cdots + n_m = l$, the right hand side above can be simplified and bounded by
	\begin{align*}
		2^N \sum_{m=1}^l \binom{N}{m}2^{-m}\sum_{1\le n_1,\ldots, n_m \le l} \mathbbm{1}_{\left\{\sum _{i=1}^m n_i=l\right\}}\cdot 2^{-mK-l}
		\leq 2^{N-l-1} \sum_{m=1}^l a_m
	\end{align*}
	for $$a_m:=\binom{N}{m}\binom{l-1}{m-1}2^{-mK},$$  where we have used the trivial bound $2^{-m}\leq 2^{-1}.$ Note that as long as $N$ is large enough, we have that 
	\begin{align*}
		\frac{a_{m+1}}{a_m} &= \frac{N-m}{m+1}\cdot \frac{l-m}{m} \cdot 2^{-K}\\
		&\le N^2 2^{-K} \le \frac{1}{2}
	\end{align*}
	for all $1\leq m\leq l$.
	Thus, the number of configurations $\sigma$ with $NQ(\sigma)=l$ is bounded by
	\begin{align*}
		\bigl|\{\sigma\in \Sigma_N:NQ(\sigma)=l\}\bigr|&\le 2^{N-l-1}a_1\cdot (1+2^{-1} +2^{-2}+ \cdots ) \\
		&= 2^{N-l} \cdot N 2^{-K}= N2^{N-(K+l)},
	\end{align*}
	which is the desired bound. 
\end{proof}\vspace{4mm}

\begin{remark}\rm
	The exponent in the second upper bound in Lemma \ref{lem2} is indeed tight. To see this, note that $|\{\sigma \in \Sigma_N: NQ(\sigma)=l\}|$ is at least as large as the number of configurations in which there is only one segment of $(K+l)$ consecutive 1's and the rest of the locations do not contain a segment of $K+1$ consecutive 1's. From this, we readily have
	\begin{align}\label{rmk1:eq1}
		\bigl|\{\sigma \in \Sigma_N: NQ(\sigma)=l\}\bigr| \ge N(2^{N-K-l-1-\frac{N-K-l-1}{K+1}}).
	\end{align}
	Here, $N$ counts the choices for the starting position (counting clockwise) of the segment of $(K+l)$ consecutive 1's followed by an additional $-1$ at the end of the segment, separating it from the rest of the $N-K-l-1$ locations. For the remaining locations, setting every integer multiple of $(K+1)$-th location to $-1$ would guarantee that there is no segment of $(K+1)$ consecutive 1's and this requires us to set at most $$
	\bigg\lfloor \frac{N-K-l-1}{K+1}\bigg\rfloor$$ many such locations. These together yield \eqref{rmk1:eq1} and thus,
	\begin{align*}
		\bigl| \{\sigma \in \Sigma_N: NQ(\sigma)=l\}\bigr| &\ge N 2^{N-K-l-1-\frac{N-K}{K+1}}\ge C_\alpha N2^{N-K-l},
	\end{align*}
	where the last inequality holds since $$\frac{N-K}{K+1}\sim \frac{1-\alpha}{\alpha}.$$
\end{remark}

\begin{lemma}\label{lem1}
	For any $t\in [0,1]$ and $s\in \mathbb{R}$, we have that
	\begin{align*}
		\p\bigl(X_1(t)\geq s\sqrt{N},X_2(t)\geq s\sqrt{N}\bigr)&\leq \p(Z\geq s\sqrt{N})^2+\frac{1}{2\pi}\arcsin(t)e^{-sN^2/(1+t)}
	\end{align*}
	for $Z\sim N(0,1).$
\end{lemma}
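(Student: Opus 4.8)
The plan is to regard the joint tail probability as a function of the correlation $t$ and to differentiate it using the Gaussian heat-equation identity (Price's theorem). Write $a:=s\sqrt{N}$ and let $\phi_t$ denote the density of the centered bivariate normal vector $(X_1(t),X_2(t))$ with unit variances and correlation $t$, namely
\[
\phi_t(x,y)=\frac{1}{2\pi\sqrt{1-t^2}}\exp\Bigl(-\frac{x^2-2txy+y^2}{2(1-t^2)}\Bigr).
\]
Set
\[
F(t):=\p\bigl(X_1(t)\geq a,\,X_2(t)\geq a\bigr)=\int_a^\infty\!\!\int_a^\infty \phi_t(x,y)\,dx\,dy.
\]
At $t=0$ the two coordinates are independent, so $F(0)=\p(Z\geq a)^2$, which is exactly the first term on the right-hand side. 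The task is therefore to control the increment $F(t)-F(0)=\int_0^t F'(u)\,du$.

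The key step is the classical identity $\partial_t\phi_t=\partial_x\partial_y\phi_t$ for the bivariate normal density, which lets me differentiate under the integral and then telescope the resulting mixed partial over the quadrant $[a,\infty)^2$. Since $\phi_t$ and its first partials decay rapidly at infinity, two applications of the fundamental theorem of calculus collapse the double integral to its corner value:
\[
F'(t)=\int_a^\infty\!\!\int_a^\infty \partial_x\partial_y\phi_t(x,y)\,dx\,dy=\phi_t(a,a)=\frac{1}{2\pi\sqrt{1-t^2}}\exp\Bigl(-\frac{a^2}{1+t}\Bigr),
\]
where the final equality follows by substituting $(a,a)$ into the explicit density and simplifying via $a^2-2ta^2+a^2=2a^2(1-t)$ and $1-t^2=(1-t)(1+t)$.

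It then remains to integrate from $0$ to $t$ and to bound the result cleanly. Because $u\mapsto 1/(1+u)$ is decreasing and $a^2=s^2N\geq 0$, the exponential factor is largest at $u=t$, so I pull out $\exp(-a^2/(1+t))=\exp(-s^2N/(1+t))$ and am left with $\int_0^t (2\pi\sqrt{1-u^2})^{-1}\,du=\arcsin(t)/(2\pi)$. Combining the pieces gives
\[
F(t)\leq \p(Z\geq a)^2+\frac{\arcsin(t)}{2\pi}\exp\Bigl(-\frac{s^2N}{1+t}\Bigr),
\]
which is the claimed inequality with $a=s\sqrt{N}$. (The stated exponent $sN^2$ should read $s^2N$, since $a^2=s^2N$.)

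I expect the only genuine technical point to be justifying the differentiation under the integral sign together with the telescoping over the unbounded quadrant; both are routine given the rapid Gaussian decay of $\phi_t$ and its derivatives. The apparent singularity of $1/\sqrt{1-t^2}$ as $t\to 1$ is harmless, since $\int_0^t du/\sqrt{1-u^2}=\arcsin t$ stays finite up to $t=1$.
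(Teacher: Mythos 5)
Your proof is correct. The paper itself gives no argument at all here: it simply cites the normal comparison inequality of Li and Shao \cite{li2002normal} and notes that the lemma is a special case. What you have done is reconstruct, from scratch, essentially the proof that underlies that reference: the Plackett/Price identity $\partial_t\phi_t=\partial_x\partial_y\phi_t$, the collapse of the mixed partial over the quadrant to the corner value $\phi_t(a,a)$, and the monotonicity of $u\mapsto e^{-a^2/(1+u)}$ to pull the exponential out of the integral and leave $\int_0^t(2\pi\sqrt{1-u^2})^{-1}\,du=\arcsin(t)/(2\pi)$. All the computations check out (including the simplification $\phi_t(a,a)=(2\pi\sqrt{1-t^2})^{-1}e^{-a^2/(1+t)}$, and the argument is indifferent to the sign of $s$ since only $a^2=s^2N\geq 0$ enters), and the two technical points you flag --- differentiation under the integral and the vanishing of boundary terms at infinity --- are indeed routine for Gaussian densities. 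You are also right that the exponent $e^{-sN^2/(1+t)}$ in the statement is a typo for $e^{-s^2N/(1+t)}$; the paper's own use of the lemma in Section \ref{momentcomparisons}, where the bound appears as $e^{-Ns^2/(1+l/N)}$, confirms the intended form. The trade-off between the two approaches is the usual one: the citation is shorter, while your self-contained derivation makes the lemma verifiable without consulting \cite{li2002normal} and makes transparent exactly where the $\arcsin$ and the $1/(1+t)$ in the exponent come from.
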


\begin{proof}
	Our assertion follows as a special case of the general Gaussian comparison inequality established in \cite{li2002normal}.
\end{proof}

\subsection{Moment Comparisons}\label{momentcomparisons}

In light of the first inequality in Lemma \ref{lem2}, we only need to bound the sum in \eqref{momentmethod:eq1} for $l$ in three cases: $\{N\},$ $\{0\}$, $\{1,\ldots,N-K-1\}$, and  It follows that from Lemmas \ref{lem2} and \ref{lem1} and noting that $|\{\sigma:Q(\sigma)=0\}|\leq 2^N,$ we have that
\begin{align*}
	\e L_N(s)^2&\leq 2^N\p(Z\geq s\sqrt{N})+2^{2N}\p(Z\geq s\sqrt{N})^2\\
	&+2^{2N}\p(Z\geq s\sqrt{N})^2+2^N\sum_{i=1}^{N-K-1}N2^{N-(K+l)}\cdot\frac{1}{2\pi}\arcsin(i/N)e^{-Ns^2/(1+i/N)}\\
	&\leq 2^N\p(Z\geq s\sqrt{N})+2^{2N+1}\p(Z\geq s\sqrt{N})^2\\
	&+2^NN\sum_{l=1}^{N-K-1}2^{N-(K+l)}e^{-Ns^2/(1+l/N)},
\end{align*}
where we have used $\arcsin(x)/(2\pi)\leq 1.$ Recall that
$\e L_N(s)=2^N\p(Z\geq s\sqrt{N})$, and  
it follows 
\begin{align*}
	\frac{\e L_N(s)^2}{(\e L_N(s))^2}
	&\leq \frac{2^{-N}}{\p(Z\geq s\sqrt{N})}+2+N\sum_{l=1}^{N-K-1}\frac{2^{-(K+l)}e^{-Ns^2/(1+l/N)}}{\p(Z\geq s\sqrt{N})^2}.
\end{align*}
Note that
\begin{align*}
	\frac{s\sqrt{N}}{\sqrt{2\pi}(s^2N+1)}e^{-s^2N/2}\leq \p (Z\geq s\sqrt{N}).
\end{align*}
From this, the first term can be controlled by
\begin{align*}
	\frac{2^{-N}}{\p(Z\geq s\sqrt{N})}&\leq \frac{\sqrt{2\pi}(s^2N+1)}{s\sqrt{N}}e^{-N(\ln 2-s^2/2)}.
\end{align*}
As for the third term, note that for $t=l/N,$ we can write
\begin{align*}
	N\sum_{l=1}^{N-K-1}\frac{2^{-(K+l)}e^{-Ns^2/(1+l/N)}}{\p(Z\geq s\sqrt{N})^2}&\leq N\cdot\frac{2^3\pi(s^2N+1)^2}{s^2N}\cdot Ne^{- N\min_{0\leq t\leq 1-\alpha}f_s(t)}\\
	&=\frac{2^3\pi(s^2N+1)^2N}{s^2}e^{ -N\min_{0\leq t\leq 1-\alpha}f_s (t)},
\end{align*}
where the first inequality used that $K\geq \alpha N-2$ and
\begin{align*}
	f_s(t)&:=(\alpha+t)\ln 2+\frac{s^2}{1+t}-s^2\\
	&=(\alpha+t)\ln 2-\frac{ts^2}{1+t},\,\,0\leq t\leq 1-\alpha.
\end{align*}
To sum up,\begin{align}\label{momentcomparison:eq1}
	\frac{\e L_N(s)^2}{(\e L_N(s))^2}
	&\leq 2+\frac{\sqrt{2\pi}(s^2N+1)}{s\sqrt{N}}e^{-N(\ln 2-s^2/2)}+\frac{2^2\pi(s^2N+1)^2N}{s^2}e^{ -N\min_{0\leq t\leq 1-\alpha}f_s(t)},
\end{align}
and the right hand side is bounded from above when $s< \sqrt{2\ln 2}=\beta_c$ and $\min_{0\leq t\leq 1-\alpha}f_s(t)>0.$ To verify these inequalities, we divide our discussion into three cases:

\begin{lemma}\label{add:lem1}
	If $2-\sqrt{2}<\alpha\leq 1,$ we have 
	\begin{align*}
		\min_{t\in [0,1-\alpha]}f_s(t)>0,\,\,\forall 0<s<\beta_c.
	\end{align*}
\end{lemma}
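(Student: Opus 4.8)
The plan is to analyze the single-variable function $f_s(t)=(\alpha+t)\ln 2-\frac{ts^2}{1+t}$ directly on the interval $[0,1-\alpha]$ and to show that it stays strictly positive for every $0<s<\beta_c$. First I would rewrite it in the more transparent form $f_s(t)=(\alpha+t)\ln 2-s^2+\frac{s^2}{1+t}$ and record the left endpoint value $f_s(0)=\alpha\ln 2>0$, which already handles $t=0$ and suggests that any difficulty must come from an interior or right-endpoint minimum.

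Next I would exploit convexity. Differentiating gives $f_s'(t)=\ln 2-\frac{s^2}{(1+t)^2}$ and $f_s''(t)=\frac{2s^2}{(1+t)^3}>0$, so $f_s$ is strictly convex on $(-1,\infty)$ with a unique global minimizer $t^*=\frac{s}{\sqrt{\ln 2}}-1$ solving $f_s'(t^*)=0$. Hence the minimum of $f_s$ over the compact interval $[0,1-\alpha]$ is attained at the projection of $t^*$ onto that interval, and I would split into the three cases $t^*\le 0$, $0<t^*<1-\alpha$, and $t^*\ge 1-\alpha$, corresponding respectively to the ranges $s\le\sqrt{\ln 2}$, $\sqrt{\ln 2}<s<(2-\alpha)\sqrt{\ln 2}$, and $s\ge(2-\alpha)\sqrt{\ln 2}$.

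The two extreme cases are routine. When $t^*\le 0$ the minimum is $f_s(0)=\alpha\ln 2>0$. When $t^*\ge 1-\alpha$ the minimum is $f_s(1-\alpha)=\ln 2-\frac{(1-\alpha)s^2}{2-\alpha}$, which is positive because $s<\beta_c$ forces $s^2<2\ln 2$ while $\frac{2-\alpha}{1-\alpha}>2$ for every $\alpha\in(0,1)$, so that $s^2<\frac{(2-\alpha)\ln 2}{1-\alpha}$. The substantive case is the interior minimum, where substituting $1+t^*=s/\sqrt{\ln 2}$ gives the clean closed form $f_s(t^*)=-s^2+2\sqrt{\ln 2}\,s+(\alpha-1)\ln 2$, which factors as $-\bigl(s-\sqrt{\ln 2}(1-\sqrt\alpha)\bigr)\bigl(s-\sqrt{\ln 2}(1+\sqrt\alpha)\bigr)$ and is therefore positive exactly on the window $\sqrt{\ln 2}(1-\sqrt\alpha)<s<\sqrt{\ln 2}(1+\sqrt\alpha)$.

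To close the interior case I would verify that the relevant $s$-range $\bigl(\sqrt{\ln 2},(2-\alpha)\sqrt{\ln 2}\bigr)$ lies inside this positivity window. The lower containment is immediate since $\sqrt{\ln 2}>\sqrt{\ln 2}(1-\sqrt\alpha)$, while the upper containment $(2-\alpha)\sqrt{\ln 2}\le\sqrt{\ln 2}(1+\sqrt\alpha)$ reduces to $\alpha+\sqrt\alpha\ge 1$, which is guaranteed by the hypothesis $\alpha>2-\sqrt 2$ (the map $\alpha\mapsto\alpha+\sqrt\alpha$ is increasing and equals $1$ at $\tfrac{3-\sqrt5}{2}<2-\sqrt2$). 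I expect this containment to be the crux of the argument: recognizing the factorization of $f_s(t^*)$ and matching its roots $\sqrt{\ln 2}(1\pm\sqrt\alpha)$ against the endpoints of the $s$-range is precisely where the threshold on $\alpha$ enters. It is worth noting in the writeup that $\alpha_*=3-2\sqrt2$ is exactly the value at which $\beta_c=\sqrt 2\sqrt{\ln 2}$ equals the upper root $\sqrt{\ln 2}(1+\sqrt\alpha)$; this is why $2-\sqrt2$ (where $t^*$ crosses $1-\alpha$ at $s=\beta_c$) and $\alpha_*$ are the natural dividing lines separating the high-epistasis regime treated here from the neighboring ranges.
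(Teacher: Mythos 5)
Your proof is correct and follows essentially the same route as the paper: both split into the three cases according to whether the critical point $t^*=s/\sqrt{\ln 2}-1$ falls below, inside, or above $[0,1-\alpha]$, evaluate $f_s$ at the resulting minimizer, and reduce the interior case to checking that $(\sqrt{\ln 2},(2-\alpha)\sqrt{\ln 2})$ sits inside the positivity window $\bigl(\sqrt{\ln 2}(1-\sqrt{\alpha}),\sqrt{\ln 2}(1+\sqrt{\alpha})\bigr)$, i.e.\ that $2-\alpha<1+\sqrt{\alpha}$ under $\alpha>2-\sqrt{2}$. The explicit factorization of $f_s(t^*)$ and the closing remark on the roles of $2-\sqrt 2$ versus $\alpha_*$ are nice touches but do not change the argument.
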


\begin{proof} Obviously our assertion holds when $\alpha =1$. Thus, we will only focus on the case $2-\sqrt{2}<\alpha<1$, which implies that $
	\sqrt{\ln 2}<(2-\alpha)\sqrt{\ln 2}<\sqrt{2\ln 2}.$ From this inequality, we divide our discussion into three cases.
	
	\begin{itemize}
		\item  Case I: $\sqrt{\ln 2}< s< (2-\alpha)\sqrt{\ln 2}.$ 
		Note that
		\begin{align*}
			f_s'(t)=\ln 2-\frac{s^2}{(1+t)^2}=0
		\end{align*}
		and it has a unique zero at $t_*=s/\sqrt{\ln 2}-1,$ which lies in $[0,1-\alpha]$. Therefore, $f$ attains its minimum value at $t_*$ with
		$$
		f_s(t_*)=-(1-\alpha) \ln 2+2s\sqrt{\ln 2}-s^2
		$$
		Here, to ensure that the right hand side is strictly positive, it is necessarily that
		\begin{align*}
			\sqrt{\ln 2}(1-\sqrt{\alpha})<s<\sqrt{\ln 2}(1+\sqrt{\alpha}).
		\end{align*}
		Thus, if 
		\begin{align}\label{add:eq1}
			\sqrt{\ln 2}<s<\sqrt{\ln 2}\min(2-\alpha,1+\sqrt{\alpha}),
		\end{align}
		then $\min_{0\leq t\leq 1-\alpha}f_s(t)>0.$ Here, since $1-\alpha<\sqrt{2}-1<(\sqrt{5}-1)/2,$ we see that $2-\alpha<1+\sqrt{\alpha}.$ Hence, whenever $\sqrt{\ln 2}<s<(2-\alpha)\sqrt{\ln 2},$ we have $\min_{t\in [0,1-\alpha]}f_s(t)>0.$
		
		\item Case II:
		$s\leq \sqrt{\ln 2}$. In this regime, we see that $s\leq (1+t)\sqrt{\ln 2}$ for any $0\leq t\leq 1-\alpha$ and hence $f_s'(t)\geq 0$ for all $t\in [0,1-\alpha]$. Hence, $\min_{t\in [0,1-\alpha]}f_s(t)=f_s(0)=\alpha\ln 2>0.$

		\item Case III: $(2-\alpha)\sqrt{\ln 2}\leq s\leq \sqrt{2\ln 2}.$ Since $s\geq (1+t)\sqrt{\ln 2}$ for any $0\leq t\leq 1-\alpha$, we have that $f_s'(t)\leq 0$ for all $t\in [0,1-\alpha]$ and thus, $$\min_{t\in [0,1-\alpha]}f_s(t)=f_s(1-\alpha)=\ln 2-\frac{(1-\alpha) s^2}{2-\alpha},$$ which is strictly positive since $$s
		\leq \sqrt{2\ln 2}<\sqrt{\frac{(2-\alpha)\ln 2}{1-\alpha}}.$$
	\end{itemize}
\end{proof}

\begin{lemma}\label{add:lem2}
	If $\alpha_*\leq \alpha\leq 2-\sqrt{2},$ then
	\begin{align*}
		\min_{t\in [0,1-\alpha]}f_s(t)>0,\,\,\forall 0<s<\beta_c.
	\end{align*}
\end{lemma}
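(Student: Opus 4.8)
The plan is to reuse the convexity analysis of $f_s$ already carried out in the proof of Lemma \ref{add:lem1}, tracking carefully where the hypothesis $\alpha\ge\alpha_*$ is decisive. Since $f_s''(t)=2s^2/(1+t)^3>0$, the function $f_s$ is strictly convex on $[0,1-\alpha]$ with a single unconstrained critical point $t_*=s/\sqrt{\ln 2}-1$, coming from $f_s'(t)=\ln 2-s^2/(1+t)^2=0$. I would then split the argument according to whether $t_*$ lands inside $[0,1-\alpha]$, exactly as in the three-case scheme of Lemma \ref{add:lem1}.

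First, for $0<s\le\sqrt{\ln 2}$ one has $t_*\le 0$, so $f_s$ is nondecreasing on $[0,1-\alpha]$ and $\min_t f_s(t)=f_s(0)=\alpha\ln 2>0$; this reproduces Case II of Lemma \ref{add:lem1} verbatim. The substance of the lemma lies in the range $\sqrt{\ln 2}<s<\beta_c$. Here I would first note that the constraint $\alpha\le 2-\sqrt{2}$ forces $(2-\alpha)\sqrt{\ln 2}\ge\sqrt{2\ln 2}=\beta_c$, so that $t_*=s/\sqrt{\ln 2}-1<(2-\alpha)-1=1-\alpha$ for every $s<\beta_c$; together with $t_*>0$ this places the minimizer in the open interval $(0,1-\alpha)$. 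Evaluating at $t_*$ via $1+t_*=s/\sqrt{\ln 2}$ gives the same expression $f_s(t_*)=-(1-\alpha)\ln 2+2s\sqrt{\ln 2}-s^2$ as before. Read as a downward quadratic in $s$, its roots are $\sqrt{\ln 2}(1\pm\sqrt{\alpha})$, so $f_s(t_*)>0$ precisely when $\sqrt{\ln 2}(1-\sqrt{\alpha})<s<\sqrt{\ln 2}(1+\sqrt{\alpha})$, and the lower bound is automatic since $s>\sqrt{\ln 2}\ge\sqrt{\ln 2}(1-\sqrt{\alpha})$.

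The remaining upper bound $s<\sqrt{\ln 2}(1+\sqrt{\alpha})$ is exactly the point at which the hypothesis $\alpha\ge\alpha_*$ enters, and I expect this identification to be the only real subtlety. Because $\alpha_*=3-2\sqrt{2}=(\sqrt{2}-1)^2$, the condition $\alpha\ge\alpha_*$ is equivalent to $\sqrt{\alpha}\ge\sqrt{2}-1$, i.e. $1+\sqrt{\alpha}\ge\sqrt{2}$, whence $\sqrt{\ln 2}(1+\sqrt{\alpha})\ge\sqrt{2\ln 2}=\beta_c>s$. Thus $f_s(t_*)>0$ on all of $\sqrt{\ln 2}<s<\beta_c$, which closes the argument. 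The conceptual content is precisely that $\alpha_*$ is the threshold at which the larger root $\sqrt{\ln 2}(1+\sqrt{\alpha})$ of the interior minimum value $f_s(t_*)$ first reaches the critical temperature $\beta_c$; for $\alpha<\alpha_*$ this root would fall below $\beta_c$ and $f_s(t_*)$ would turn negative for $s$ near $\beta_c$, which is the mechanism underlying the breakdown of the second moment method in the low-epistasis regime. I would also remark that Case III of Lemma \ref{add:lem1} (where $s\ge(2-\alpha)\sqrt{\ln 2}$) never arises here, since $(2-\alpha)\sqrt{\ln 2}\ge\beta_c>s$, so the interval $[\alpha_*,2-\sqrt{2}]$ is genuinely governed by the interior-minimum computation.
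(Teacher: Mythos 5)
Your proof is correct and follows essentially the same route as the paper's: both reduce to Cases I and II of Lemma \ref{add:lem1}, using $\alpha\le 2-\sqrt{2}\Rightarrow 2-\alpha\ge\sqrt 2$ to place the interior minimizer $t_*$ in $[0,1-\alpha]$ for all $s<\beta_c$, and $\alpha\ge\alpha_*=(\sqrt 2-1)^2\Rightarrow 1+\sqrt\alpha\ge\sqrt 2$ to guarantee $f_s(t_*)>0$ up to $s=\beta_c$. Your added observations (that Case III never arises here, and that $\alpha_*$ is exactly the threshold where the root $\sqrt{\ln 2}(1+\sqrt\alpha)$ meets $\beta_c$) are accurate and consistent with Remark \ref{remark:momentmethod}.
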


\begin{proof}
	Note that $\alpha\leq 2-\sqrt{2}$ implies that $(2-\alpha)\sqrt{\ln 2}\geq \sqrt{2\ln 2}.$ Hence, as Case I in Lemma \ref{add:lem1}, whenever $$
	\sqrt{\ln 2}<s< \sqrt{\ln 2}\min(2-\alpha,1+\sqrt{\alpha}),$$ 
	we have $\min_{t\in [0,1-\alpha]}f_s(t)>0$. Here, $\alpha\leq 2-\sqrt{2}$ implies that $2-\alpha\geq \sqrt{2}$; $\varepsilon\geq \alpha_*$ implies that $1+\sqrt{\alpha}\geq \sqrt{2}.$ Hence, if $\sqrt{\ln 2}<s<\sqrt{2\ln 2}$, $\min_{t\in [0,1-\alpha]}f_s(t)>0.$ Next, if $s\leq \sqrt{\ln 2},$ we can use Case II in Lemma \ref{add:lem1} to show that $\min_{t\in [0,1-\alpha]}f_s(t)>0$ if $s\leq \sqrt{\ln 2}.$ These complete our proof.
\end{proof}

\begin{lemma}\label{2ndmoment:lem1}
	If $0<\alpha<\alpha_*,$ then
	\begin{align*}
		\min_{t\in [0,1-\alpha]}f_s(t)>0,\,\,\forall 0<s<\sqrt{\ln 2}(1+\sqrt{\alpha})<\beta_c.
	\end{align*}
\end{lemma}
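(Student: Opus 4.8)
The plan is to follow the same critical-point analysis used in the proofs of Lemmas \ref{add:lem1} and \ref{add:lem2}, the only new feature being that the admissible range of $s$ is now cut off at $\sqrt{\ln 2}(1+\sqrt\alpha)$ rather than at $\beta_c$. First I would record that
$f_s'(t)=\ln 2-s^2/(1+t)^2$ is strictly increasing in $t$ and vanishes only at $t_*=s/\sqrt{\ln 2}-1$, so that $f_s$ is convex on $[0,1-\alpha]$ and attains its global minimum at $t_*$; hence the minimum over $[0,1-\alpha]$ sits at $t_*$ whenever $t_*\in[0,1-\alpha]$, and at an endpoint otherwise.

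Next I would split into two cases. When $0<s\le\sqrt{\ln 2}$ one has $t_*\le 0$, so $f_s'\ge 0$ throughout $[0,1-\alpha]$ and $\min f_s=f_s(0)=\alpha\ln 2>0$, exactly as in Case II of Lemma \ref{add:lem1}. When $\sqrt{\ln 2}<s<\sqrt{\ln 2}(1+\sqrt\alpha)$ one has $t_*>0$, and the key point is to confirm $t_*\le 1-\alpha$, i.e.\ $s\le(2-\alpha)\sqrt{\ln 2}$. Here I would use that $\alpha<\alpha_*=3-2\sqrt 2$ forces $\sqrt\alpha+\alpha<1$, equivalently $1+\sqrt\alpha<2-\alpha$, so the imposed bound $s<\sqrt{\ln 2}(1+\sqrt\alpha)$ already yields $s<(2-\alpha)\sqrt{\ln 2}$; thus $t_*$ lies in the interior of $[0,1-\alpha]$ and the minimum equals $f_s(t_*)$.

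It then remains to evaluate $f_s(t_*)$. Substituting $1+t_*=s/\sqrt{\ln 2}$ collapses the expression to $f_s(t_*)=-(1-\alpha)\ln 2+2s\sqrt{\ln 2}-s^2$, a downward parabola in $s$ whose roots are precisely $\sqrt{\ln 2}(1\pm\sqrt\alpha)$. Hence $f_s(t_*)>0$ exactly on $\sqrt{\ln 2}(1-\sqrt\alpha)<s<\sqrt{\ln 2}(1+\sqrt\alpha)$, covering the whole sub-range in question. Combining the two cases gives $\min_{t\in[0,1-\alpha]}f_s(t)>0$ for all $0<s<\sqrt{\ln 2}(1+\sqrt\alpha)$, and the final inequality $\sqrt{\ln 2}(1+\sqrt\alpha)<\beta_c=\sqrt{2\ln 2}$ is immediate, being equivalent to $1+\sqrt\alpha<\sqrt 2$, i.e.\ to $\alpha<3-2\sqrt 2=\alpha_*$.

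I do not expect a genuine obstacle; the computation is essentially forced. The one point requiring care — and the reason the threshold $\alpha_*$ appears — is verifying that the interior critical point $t_*$ stays inside $[0,1-\alpha]$ across the entire range $s<\sqrt{\ln 2}(1+\sqrt\alpha)$, which is exactly where $1+\sqrt\alpha<2-\alpha$ is invoked. This is also what distinguishes the present low-epistasis regime from Lemma \ref{add:lem1}, where a third case (the minimum migrating to the right endpoint $t=1-\alpha$ for $s$ near $\beta_c$) had to be treated separately; here the cutoff $\sqrt{\ln 2}(1+\sqrt\alpha)$ sits strictly below $(2-\alpha)\sqrt{\ln 2}$, so that endpoint regime is never entered.
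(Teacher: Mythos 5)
Your proposal is correct and follows essentially the same route as the paper: the paper simply invokes Cases I and II of Lemma \ref{add:lem1} (splitting at $s=\sqrt{\ln 2}$, locating the interior critical point $t_*=s/\sqrt{\ln 2}-1$, and using that $\alpha<\alpha_*$ forces $1+\sqrt{\alpha}<2-\alpha$ so the endpoint case never arises), which is exactly the computation you carry out explicitly. Your closing observation that $\sqrt{\ln 2}(1+\sqrt{\alpha})<\beta_c$ is equivalent to $\alpha<3-2\sqrt{2}$ is also the correct justification for the final inequality in the statement.
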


\begin{proof}
	Note that $\alpha\leq \alpha_*$ readily implies that
	\begin{align*}
		2-\alpha\geq 1+\sqrt{\alpha}.
	\end{align*}
	Hence, from Case I in Lemma \ref{add:lem1}, if $\sqrt{\ln 2}<s<\sqrt{2\ln 2}(1+\sqrt{\alpha})$, then $\min_{t\in [0,1-\alpha]}f_s(t)>0.$ Also, from Case II in Lemma \ref{add:lem1}, this strict positivity still holds when $0\leq s\leq \sqrt{\ln 2}.$
\end{proof}

\subsection{Proof of Theorem \ref{thm2}}

Before we begin our proof, we need two lemmas. Lemma \ref{lem3} states that the free energy, the maximal fitness, and the restricted maximal fitness are sufficiently concentrated. 

\begin{lemma}\label{lem3}
	For any $\beta>0$, $N\geq 1$, and $K\geq 1,$ we have that for any $t>0,$
	\begin{align*}
		\p\bigl(|F_{N,K}(\beta)-\e F_{N,K}(\beta)|\geq t\bigr)\leq 2e^{-\frac{Nt^2}{4\beta^2}},\\
		\p\bigl(|M_{N,K}-\e M_{N,K}|\geq t\bigr)\leq 2e^{-\frac{Nt^2}{4}}.
	\end{align*}
	Also, for any nonempty $S\subset \Sigma_N\times\Sigma_N$ and $t>0,$
	\begin{align*}
		\p\bigl(|\overlineit{M}_{N,K}(S)-\e \overlineit{M}_{N,K}(S)|\geq t\bigr)\leq 2e^{-\frac{Nt^2}{16}}.
	\end{align*}
\end{lemma}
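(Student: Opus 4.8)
The plan is to realize all three quantities as Lipschitz functions of a single Gaussian disorder vector and then invoke the standard Gaussian concentration (Borell--TIS) inequality: if $f\colon\Reals^n\to\Reals$ is $L$-Lipschitz with respect to the Euclidean norm and $\vg$ is a standard Gaussian vector, then $\p(|f(\vg)-\e f(\vg)|\ge t)\le 2e^{-t^2/(2L^2)}$. First I would collect the disorder $\vg=(X_i(a))_{0\le i\le N-1,\,a\in\Sigma_{K+1}}$ into one standard Gaussian vector and record the key linear structure: for each genome $\sigma$ one has $H_{N,K}(\sigma)=\la\vg,\bv_\sigma\ra$, where $\bv_\sigma$ is the $\{0,1\}$-vector selecting, at each locus $i$, the coordinate indexed by the local pattern $(\sigma_i,\ldots,\sigma_{i+K})$. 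Since exactly one coordinate is selected per locus, $\bv_\sigma$ has precisely $N$ nonzero entries, so $\|\bv_\sigma\|_2=\sqrt N$ for every $\sigma$; moreover $\la\bv_{\sigma^1},\bv_{\sigma^2}\ra=NQ_{1,2}$, which is exactly the covariance $\e H_{N,K}(\sigma^1)H_{N,K}(\sigma^2)$ and explains why $Q$ is the natural quantity here.

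With this in hand I would treat the three estimates in turn by bounding the Lipschitz constant of the corresponding map $\vg\mapsto(\cdots)$. For $M_{N,K}=\tfrac1N\max_\sigma\la\vg,\bv_\sigma\ra$, each $\la\vg,\bv_\sigma\ra$ is $\sqrt N$-Lipschitz, and a pointwise maximum of $\sqrt N$-Lipschitz functions is again $\sqrt N$-Lipschitz; dividing by $N$ gives Lipschitz constant $1/\sqrt N$, whence $\p(|M_{N,K}-\e M_{N,K}|\ge t)\le 2e^{-Nt^2/2}$. The coupled quantity $\overlineit{M}_{N,K}(S)=\tfrac1N\max_{(\sigma^1,\sigma^2)\in S}\la\vg,\bv_{\sigma^1}+\bv_{\sigma^2}\ra$ is handled identically, using $\|\bv_{\sigma^1}+\bv_{\sigma^2}\|_2\le 2\sqrt N$ (indeed $\|\bv_{\sigma^1}+\bv_{\sigma^2}\|_2^2=2N+2NQ_{1,2}\le 4N$), which yields Lipschitz constant $2/\sqrt N$ and $\p(|\overlineit{M}_{N,K}(S)-\e\overlineit{M}_{N,K}(S)|\ge t)\le 2e^{-Nt^2/8}$. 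For the free energy $F_{N,K}(\beta)=\tfrac{\beta}{N}\cdot\tfrac1\beta\ln\sum_\sigma e^{\beta\la\vg,\bv_\sigma\ra}$, I would instead bound the gradient of the log-partition map directly: its gradient equals $\sum_\sigma G_{N,K,\beta}(\sigma)\,\bv_\sigma$, a Gibbs convex combination of the $\bv_\sigma$, so by convexity of the norm it has length at most $\max_\sigma\|\bv_\sigma\|_2=\sqrt N$; multiplying by $\beta/N$ gives Lipschitz constant $\beta/\sqrt N$ and $\p(|F_{N,K}(\beta)-\e F_{N,K}(\beta)|\ge t)\le 2e^{-Nt^2/(2\beta^2)}$.

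In each case the exponent obtained is at least as strong as the one claimed ($Nt^2/2\ge Nt^2/4$, $Nt^2/8\ge Nt^2/16$, $Nt^2/(2\beta^2)\ge Nt^2/(4\beta^2)$), so the stated inequalities follow a fortiori. There is no serious obstacle here: the entire content is the observation that the model is linear in a single Gaussian vector, with every selector $\bv_\sigma$ of the common norm $\sqrt N$. The only points requiring a moment's care are the free-energy step, where one should use the softmax-gradient (Gibbs-average) representation rather than differentiating a maximum, and the coupled step, where the factor $2$ in $\|\bv_{\sigma^1}+\bv_{\sigma^2}\|_2$ is responsible for the worse constant $16$ appearing in the exponent.
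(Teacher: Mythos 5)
Your proof is correct and follows essentially the same route as the paper: both reduce each quantity to a Lipschitz function of the single Gaussian disorder vector (with Lipschitz constants $\beta/\sqrt N$, $1/\sqrt N$, and $2/\sqrt N$ respectively) and invoke Gaussian concentration. The only difference is that you use the sharp Borell--TIS constant $2L^2$ while the paper cites Talagrand's version with $4L^2$ in the denominator (which is where its constants $4\beta^2$, $4$, $16$ come from), so your bounds are slightly stronger and imply the stated ones a fortiori.
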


\begin{proof}
	Set $$F(x)=\frac{1}{N}\ln \sum_{\sigma}\frac{1}{2^N}\exp \beta\sum_{i=1}^Nx_i(\sigma_i,\ldots,\sigma_{i+K})$$ for any $x=(x_i(\sigma_i,\ldots,\sigma_{i+K}))_{0\leq i\leq N-1,\sigma\in \Sigma_N}$. Then
	\begin{align*}
		F(x)
		&\leq F(y)+\frac{1}{N}\ln \sum_{\sigma}\frac{1}{2^N}e^{\beta \sqrt{N}\|x(\sigma)-y(\sigma)\|_2}\\
		&\leq F(y)+\frac{1}{N}\ln \sum_{\sigma}\frac{1}{2^N}e^{\beta \sqrt{N}\|x-y\|_2}\\
		&=F(y)+\frac{\beta}{\sqrt{N}}\|x-y\|_2,
	\end{align*}
	where $x(\sigma):=(x_i(\sigma_i,\ldots,\sigma_{i+K}))_{0\leq i\leq N-1}$ and we have used that $\|x(\sigma)-y(\sigma)\|_2\leq \|x-y\|_2$ for all $\sigma$. The first assertion follows directly from the Gaussian concentration inequality for Lipschitz functions, see, e.g., \cite[Proposition 1.3.5]{talagrand2010mean}. The second follows from the first by substituting $t$ by $\beta t.$
\end{proof}

\begin{lemma}[Proposition 1.1.3 in \cite{talagrand2010mean}]\label{main:proof:lem2}
	Let $(g_k)_{1\leq k\leq M}$ be a sequence of centered Gaussian r.v. with $\e g_k^2\leq T^2$ for all $1\leq k\leq M.$ Then
	\begin{align*}
		\e\ln \sum_{k=1}^Me^{\beta g_k}\leq \left\{\begin{array}{ll}
			\ln M+\frac{\beta T^2}{2},&\mbox{if $0<\beta<\frac{\sqrt{2\ln M}}{T}$},\\
			\\
			\beta T\sqrt{2\ln M},&\mbox{if $\beta \geq \frac{\sqrt{2\ln M}}{T}$}.
		\end{array}
		\right.
	\end{align*}
	and
	$$
	\e\max_{1\leq k\leq M}g_k\leq T\sqrt{2\ln M}.
	$$
\end{lemma}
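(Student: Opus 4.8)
The plan is to derive both inequalities from the single elementary ingredient $\e e^{\lambda g_k}=e^{\lambda^2\e g_k^2/2}\le e^{\lambda^2T^2/2}$ for every $\lambda>0$, combined with Jensen's inequality and one optimization over an auxiliary parameter. Set $\beta^\ast:=\sqrt{2\ln M}/T$ for the crossover temperature.

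First I would establish the high-temperature free energy bound. Since $\ln$ is concave, Jensen's inequality gives $\e\ln\sum_{k}e^{\beta g_k}\le\ln\e\sum_{k}e^{\beta g_k}=\ln\sum_{k}\e e^{\beta g_k}\le\ln\bigl(Me^{\beta^2T^2/2}\bigr)=\ln M+\tfrac{\beta^2T^2}{2}$. This is valid for every $\beta>0$, but I would keep it as the stated estimate only for $\beta<\beta^\ast$. Next I would prove the maximum bound by an exponential Chebyshev argument: for any $\lambda>0$, concavity of $\ln$ (or monotonicity of $\exp$) yields $e^{\lambda\,\e\max_k g_k}\le\e e^{\lambda\max_k g_k}=\e\max_k e^{\lambda g_k}\le\e\sum_k e^{\lambda g_k}\le Me^{\lambda^2T^2/2}$. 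Taking logarithms gives $\e\max_k g_k\le\tfrac{\ln M}{\lambda}+\tfrac{\lambda T^2}{2}$, and the choice $\lambda=\beta^\ast$ minimizes the right-hand side and returns exactly $T\sqrt{2\ln M}$.

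Finally, for the low-temperature free energy bound ($\beta\ge\beta^\ast$) I would factor each summand as $e^{\beta g_k}=e^{\beta^\ast g_k}e^{(\beta-\beta^\ast)g_k}$ and bound the second factor uniformly by $e^{(\beta-\beta^\ast)\max_j g_j}$, so that $\sum_k e^{\beta g_k}\le e^{(\beta-\beta^\ast)\max_j g_j}\sum_k e^{\beta^\ast g_k}$. Taking $\ln$ and then $\e$, I would insert the high-temperature bound evaluated precisely at $\beta^\ast$, which equals $\ln M+\tfrac{(\beta^\ast)^2T^2}{2}=2\ln M$, together with the maximum bound just proved. Since $\beta^\ast T\sqrt{2\ln M}=2\ln M$, the additive $\ln M$ contributions cancel and one is left with exactly $\beta T\sqrt{2\ln M}$.

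The only delicate point is this last cancellation: the reason the low-temperature bound is sharp (with no residual additive $\ln M$) is that the splitting is performed at the crossover $\beta^\ast$, so that the constant term $2\ln M$ coming from the high-temperature estimate is exactly absorbed into $-\beta^\ast T\sqrt{2\ln M}$. Choosing any other base temperature would leave a spurious positive constant. All remaining manipulations—the Gaussian moment generating function, Jensen, and the one-variable minimization giving $\lambda=\beta^\ast$—are routine.
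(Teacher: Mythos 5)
Your proof is correct. Note first that the paper does not actually prove this lemma --- it is quoted from Talagrand's book (Proposition 1.1.3 in \cite{talagrand2010mean}) --- so there is no internal proof to compare against; what you have written is essentially the standard argument, and every step checks out: Jensen plus the Gaussian moment generating function for the high-temperature bound, the exponential Chebyshev/optimization argument for $\e\max_k g_k$, and the splitting $e^{\beta g_k}=e^{\beta^\ast g_k}e^{(\beta-\beta^\ast)g_k}$ at the crossover $\beta^\ast=\sqrt{2\ln M}/T$ for the low-temperature bound, where the cancellation $(\beta^\ast)^2T^2/2=\ln M=\beta^\ast T\sqrt{2\ln M}-\ln M$ indeed removes the additive $\ln M$ exactly. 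The one place you differ from the canonical proof is the low-temperature step: the textbook route instead uses the subadditivity $\bigl(\sum_k a_k\bigr)^{\gamma/\beta}\le\sum_k a_k^{\gamma/\beta}$ for $\gamma\le\beta$ to get $\e\ln\sum_k e^{\beta g_k}\le\frac{\beta}{\gamma}\ln M+\frac{\beta\gamma T^2}{2}$ and then optimizes $\gamma=\beta^\ast$; your version replaces that H\"older-type inequality with the bound $e^{(\beta-\beta^\ast)g_k}\le e^{(\beta-\beta^\ast)\max_j g_j}$ plus the already-proved maximum estimate, which is equally elementary and gives the same constant. Two small remarks: (i) the high-temperature bound you prove, $\ln M+\beta^2T^2/2$, is the correct one --- the paper's display reads $\ln M+\beta T^2/2$, which is a typo, as one sees from the application in the subsequent Remark where the conclusion is $\ln 2+\beta^2/2$; (ii) as you note, the Jensen bound holds for all $\beta>0$, and restricting it to $\beta<\beta^\ast$ is only a matter of which of the two bounds is smaller.
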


\begin{remark}
	\rm In particular, applying this lemma to the free energy of the $\NK$ model, we have the following bounds:
	\begin{align}
		\begin{split}\label{main:proof:thm4}
			\e F_{N,K}(\beta)&\leq 
			\left\{\begin{array}{ll}
				\ln 2+\frac{\beta^2}{2},&\forall \beta<\beta_c,\\
				\\
				\beta \sqrt{2\ln 2},&\forall \beta \geq \beta_c.
			\end{array}
			\right.
		\end{split}
	\end{align}
\end{remark}

Now we are ready to prove Theorem \ref{thm2}.  Assume $\alpha_*\leq \alpha\leq 1.$ From Lemmas \ref{add:lem1} and \ref{add:lem2}, we have that for all $0\leq s<\beta_c,$
\begin{align*}
	\limsup_{N\to\infty}\frac{\e L_N(s)^2}{(\e L_N(s))^2}\leq 2.
\end{align*}
Hence, using the Paley-Zygmund ienquality, as long as $N$ is large enough,
\begin{align*}
	\p(L_N(s)\geq  2^{-1}\e L_N(s))&\geq \frac{(\e L_N(s))^2}{4\e L_N(s)^2 }\geq \frac{1}{9}. 
\end{align*}
Therefore, it follows that with probability at least $1/9,$ $L_N(s)\geq 2^{-1}\e L_N(s)>0$ and thus,
\begin{align*}
	F_{N,K}(\beta)\geq \frac{1}{N}\ln \bigl(2^{-1}\e\bigl( L_N(s) e^{\beta Ns}\bigr)\bigr)=-\frac{\ln 2}{N}+\frac{\ln \e L_N(s)}{N}+\beta s.
\end{align*}
On the other hand, from Lemma \ref{lem3}, for any $\delta>0,$ as long as $N$ is large enough, with probability at least $1-1/10,$ $$
F_{N,K}(\beta)\leq \e F_{N,K}(\beta)+\delta.
$$
Consequently, sending $N\to\infty$ and then $\delta\downarrow 0,$
\begin{align}\label{eq1}
	\liminf_{N\to\infty}\e F_{N,K}(\beta)\geq \lim_{\delta\downarrow 0}\liminf_{N\to\infty}\Bigl(\frac{\ln 2}{N}+\frac{\ln \e L_N(s)}{N}+\beta s-\delta\Bigr)= \ln 2-\frac{s^2}{2}+\beta s.
\end{align}
Next,
note that $s\geq 0\mapsto \ln 2-s^2/2+\beta s$ is a concave function and it maximum value is attained by $s=\beta.$ Thus, if $\beta \leq \beta_c,$ we can take $s=\beta$ so that 
\begin{align*}
	\liminf_{N\to\infty}\e F_{N,K}(\beta)\geq \ln 2+\frac{\beta^2}{2}.
\end{align*}
If $\beta\geq \beta_c$, we send  $s\uparrow \beta_c$ to get
\begin{align*}
	\liminf_{N\to\infty}\e F_{N,K}(\beta)\geq \beta \beta_c.
\end{align*}
These together with \eqref{main:proof:thm4} and Lemma \ref{lem3} conclude the first desired limit $(i)$ in Theorem \ref{thm2}. As for the second case, $\alpha<\alpha_*,$ we also have that
\begin{align*}
	\limsup_{N\to\infty}\frac{\e L_N(s)^2}{(\e L_N(s))^2}\leq 2
\end{align*}
for all $0<s<\sqrt{\ln 2}(1+\sqrt{\alpha})<\beta_c.$ From \eqref{2ndmoment:lem1}, the same argument as above yields that
\begin{align*}
	\liminf_{N\to\infty}\e F_{N,K}(\beta)\geq \ln 2+\frac{\beta^2}{2}.
\end{align*}
as long as $\beta\leq \sqrt{\ln 2}(1+\sqrt{\alpha}).$ This readily implies $(ii)$ in Theorem \ref{thm2}, again due to \eqref{main:proof:thm4} and Lemma \ref{lem3} and our proof is completed.

\begin{remark}\label{remark:momentmethod}\rm
	It is well-known \cite{hashorva2005asymptotics} that for any fixed $0\leq t<1$ and $s>0,$
	$$
	\lim_{N\to\infty}\frac{\p(X_1(t)\geq s\sqrt{N},X_2(t)\geq s\sqrt{N})}{\frac{1}{2\pi s^2N\sqrt{1-t^2}}e^{-\frac{s^2N}{1+t}}}=1.
	$$ 
	This suggests that the leading-order term in \eqref{momentcomparison:eq1} is essentially sharp, revealing an inherent limitation of the current second moment method in obtaining the limiting free energy for all temperatures within the regime $0<\alpha<\alpha_*.$
\end{remark}

\section{Limiting Free Energy and Maximal Fitness}\label{sec6}

This section is devoted to proving Theorem \ref{thm:Fnk-Mnk}, which relies on two key approximations: the $p$-spin $\NK$ model and a balanced multi-species model. The first two subsections provide the necessary groundwork for the main proof.

\subsection{$\NK$ Model with $p$-spin Interaction}

Let $p$ be a fixed even number. For any $N\geq K\geq 1,$ the $\NK$ model with $p$-spin interaction is defined as the Gaussian process $H_{N,K}^p$ with covariance 
\begin{align*}
	\e    H_{N,K}^p(\sigma^1)H_{N,K}^p(\sigma^2)&=\sum_{i=0}^{N-1}\bigl(R_{N,K,i}(\sigma^1,\sigma^2)\bigr)^p
\end{align*}
for $\sigma\in \Sigma_N,$
where $$
R_{N,K,i}(\sigma^1,\sigma^2):=\frac{1}{K+1}\sum_{j=0}^{K}\sigma_{i+j}^1\sigma_{i+j}^2
$$
and as the original $\NK$ model, the spins live on a periodic ring of length $N.$ Set the free energy as
$$
F_{N,K}^p(\beta)=\frac{1}{N}\ln \sum_{\sigma}e^{\beta H_{N,K}^p(\sigma)}.
$$ 
Fix $\alpha\in (0,1]$. From now on, let $K=\lfloor \alpha(N-1)\rfloor$ as in the original $\NK$ model.  If $\alpha=1,$ let $n=k=1.$ Let $1\leq k\leq  n<\infty$ so that 
$
\alpha\leq k/n.
$
Take $l=\lceil N/n\rceil.$ Set $N_l=nl$ and $K_l=kl$. The lemma below establishes a continuity of the free energy in the $p$-spin $\NK$ model in the ratio $K/N$.

\begin{lemma}\label{NKP:lemma1}
	There exists a constant $C>0$ depending only on $\beta$ and $p$ such that
	\begin{align*}
		\limsup_{N\to\infty} \bigl|\e F_{N,K}^p(\beta)-\e F_{N_l,K_l}^p(\beta)\bigr|&\leq \frac{C(k/n-\alpha)}{\alpha}.
	\end{align*}
	
\end{lemma}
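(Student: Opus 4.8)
The plan is to realize both free energies as Gibbs functionals on the single configuration space $\Sigma_{N_l}$ and then apply a Gaussian (Guerra-type) interpolation. Since $N_l\geq N$, introduce an auxiliary centered Gaussian field $\tilde H$ on $\Sigma_{N_l}$ that depends only on the first $N$ coordinates, namely $\tilde H(\sigma):=H_{N,K}^p(\sigma_0,\ldots,\sigma_{N-1})$, taken independent of $H_{N_l,K_l}^p$. Summing out the $N_l-N$ free coordinates gives $\frac1{N_l}\ln\sum_{\sigma\in\Sigma_{N_l}}e^{\beta\tilde H(\sigma)}=\frac{N}{N_l}F_{N,K}^p(\beta)+\frac{(N_l-N)\ln2}{N_l}$, and since $0\le N_l-N<n$ while $\e F_{N,K}^p(\beta)$ is bounded (Lemma \ref{main:proof:lem2}), this differs from $\e F_{N,K}^p(\beta)$ by $O(n/N)\to0$. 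Thus it suffices to compare the free energies of $\tilde H$ and $H_{N_l,K_l}^p$ on $\Sigma_{N_l}$.

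For $t\in[0,1]$ set $H_t:=\sqrt{1-t}\,\tilde H+\sqrt{t}\,H_{N_l,K_l}^p$ and $\varphi(t):=\frac1{N_l}\e\ln\sum_{\sigma\in\Sigma_{N_l}}e^{\beta H_t(\sigma)}$. Gaussian integration by parts yields
\begin{align*}
\varphi'(t)=\frac{\beta^2}{2N_l}\e\bigl\langle D(\sigma,\sigma)\bigr\rangle_t-\frac{\beta^2}{2N_l}\e\bigl\langle D(\sigma^1,\sigma^2)\bigr\rangle_t,
\end{align*}
where $D(\sigma^1,\sigma^2):=\e H_{N_l,K_l}^p(\sigma^1)H_{N_l,K_l}^p(\sigma^2)-\e\tilde H(\sigma^1)\tilde H(\sigma^2)$ and $\langle\cdot\rangle_t$ is the Gibbs average at inverse temperature $\beta$ for $H_t$ (one replica in the first term, two in the second). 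On the diagonal $D(\sigma,\sigma)=N_l-N\le n$, so the first term is $O(n/N)$ and vanishes; everything reduces to bounding $|D(\sigma^1,\sigma^2)|$ uniformly, where $D(\sigma^1,\sigma^2)=\sum_{i=0}^{N_l-1}(R_{N_l,K_l,i})^p-\sum_{i=0}^{N-1}(\tilde R_{i})^p$ with $\tilde R_i$ the length-$(K+1)$ window overlap computed on the ring of size $N$.

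The core estimate compares two window averages of the signs $\xi_j:=\sigma_j^1\sigma_j^2\in\{\pm1\}$. Using $|a^p-b^p|\le p|a-b|$ for $a,b\in[-1,1]$ together with the elementary bound $\bigl|\frac1{M}\sum_{j=0}^{M-1}\xi_{i+j}-\frac1{m}\sum_{j=0}^{m-1}\xi_{i+j}\bigr|\le 2(M-m)/M$ (with $m=K+1\le M=K_l+1$), each bulk index $i$ whose windows do not cross the periodic seam contributes at most $2p(K_l-K)/(K_l+1)$. Since $K_l=kl$ and $K=\lfloor\alpha(N-1)\rfloor$ with $N_l=nl\sim N$, one has $(K_l-K)/(K_l+1)\to (k/n-\alpha)/(k/n)\le(k/n-\alpha)/\alpha$, so summing over the at most $N_l$ bulk indices and dividing by $N_l$ produces the claimed main term. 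The remaining indices are those whose window wraps around the seam, together with the $N_l-N<n$ extra terms; although there are $\Theta(N)$ wrapping indices (the windows have length $\Theta(N)$), for each of them the coordinate multisets averaged in $R_{N_l,K_l,i}$ and $\tilde R_i$ differ, beyond the length discrepancy already accounted for, by only $O(n)$ positions near the seam, hence by at most $O(n)/(K+1)=O(n/(\alpha N))$; summing $\Theta(N)$ such contributions and dividing by $N_l$ gives $O(n/(\alpha N))\to0$.

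The main obstacle is precisely this boundary bookkeeping: because the interactions are long-range ($K\asymp\alpha N$), the mismatch between the rings of length $N$ and $N_l$ perturbs $\Theta(N)$ of the window overlaps rather than $O(1)$ of them, so one must verify that each such window is altered at only $O(n)$ coordinates and that the aggregate correction is $o(N_l)$. Once $|D(\sigma^1,\sigma^2)|\le 2pN_l\cdot(k/n-\alpha)/\alpha+o(N_l)$ is established uniformly in $\sigma^1,\sigma^2$, integrating $|\varphi'(t)|$ over $t\in[0,1]$ and invoking the first paragraph's reduction gives $\limsup_{N\to\infty}|\e F_{N,K}^p(\beta)-\e F_{N_l,K_l}^p(\beta)|\le C(k/n-\alpha)/\alpha$ with $C=C(\beta,p)$, as claimed.
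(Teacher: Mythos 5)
Your proposal is correct and follows essentially the same route as the paper: a Guerra-type Gaussian interpolation between the two Hamiltonians realized on a common configuration space (your $\tilde H$ on $\Sigma_{N_l}$ is the same device as the paper's product space $\Sigma_N\times\Sigma_{N_l-N}$), with the key input being the uniform per-window estimate $|R_{N_l,K_l,i}-R_{N,K,i}|\leq C(N_l-N+K_l-K)/K$. Your treatment is in fact slightly more explicit than the paper's on two minor points that it glosses over --- the diagonal term $D(\sigma,\sigma)=N_l-N$ in the interpolation derivative and the bookkeeping for the $\Theta(N)$ windows crossing the periodic seam --- but both are $o(1)$ corrections and the argument is otherwise identical.
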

\begin{proof} We adapt an interpolation argument to establish our proof.
	Note that $N_l\geq N$ and $K_l\geq K.$ For any $0\leq t\leq 1,$ define
	\begin{align}\label{additional:eq2}		F(t)=\frac{1}{N}\e\ln \sum_{(\sigma,\rho)\in \Sigma_N\times\Sigma_{N_l-N}}\exp\beta\bigl({\sqrt{t}H_{N,K}^p(\sigma)+\sqrt{1-t}H_{N_l,K_l}^p(\sigma,\rho)}\bigr).
	\end{align}
	Denote by $(\sigma^1,\rho^1)$ and $(\sigma^2,\rho^2)$ independent samples from the Gibbs measure associated to this free energy and by $\la \cdot\ra_t$  the corresponding Gibbs expectation.  
	Then
	\begin{align}
		\begin{split}\label{NKP:proof:eq1}
			F(0)&=\frac{1}{N}\e\ln \sum_{\tau\in \Sigma_{N_l}}\exp \beta H_{N_l,K_l}^p(\tau),\\
			F(1)&=\frac{1}{N}\e \ln \sum_{\sigma\in \Sigma_N}\exp{\beta H_{N,K}^p(\sigma)}+\frac{N_l-N}{N}\ln 2.
		\end{split}
	\end{align}
	Now, using the Gaussian integration by parts yields
	\begin{align}
		\nonumber	F'(t)&=\frac{\beta^2}{2}\e\Bigl\la \frac{1}{N}\sum_{i=0}^{N_l-1}R_{N_l,K_l,i}((\sigma^1,\rho^1),(\sigma^2,\rho^2))^p-\frac{1}{N}\sum_{i=0}^{N-1}\bigl(R_{N,K,i}(\sigma^1,\sigma^2)\bigr)^p\Bigr\ra_t\\
		\nonumber	&=\frac{\beta^2}{2}\e\Bigl\la \frac{1}{N}\sum_{i=0}^{N-1}\Bigl(R_{N_l,K_l,i}((\sigma^1,\rho^1),(\sigma^2,\rho^2))^p-\bigl(R_{N,K,i}(\sigma^1,\sigma^2)\bigr)^p\Bigr)\\
		\label{additional:eq1}	&\qquad\qquad\qquad\qquad+\frac{1}{N}\sum_{i=N}^{N_l-1}R_{N_l,K_l,i}((\sigma^1,\rho^1),(\sigma^2,\rho^2))^p\Bigr\ra_t.
	\end{align}
	Here, observe that for all $0\leq i\leq N-1,$ the spins appearing in the overlaps $R_{N_l,K_l,i}((\sigma^1,\rho^1),(\sigma^2,\rho^2))$ and $R_{N,K,i}(\sigma^1,\sigma^2)$ are different from each other by at most $N_l-N+K_l-K$ many locations, it follows that there exists a constant $C$ independent of $i$ such that
	\begin{align*}
		\Bigl| R_{N_l,K_l,i}((\sigma^1,\rho^1),(\sigma^2,\rho^2))-R_{N,K,i}(\sigma^1,\sigma^2)\Bigr|&\leq \frac{C(N_l-N+K_l-K)}{K}.
	\end{align*}
	Hence,
	\begin{align*}
		\bigl|F'(t)\bigr|&\leq C\beta^2p\frac{N_l-N+K_l-K}{K}+\frac{\beta^2}{2}\frac{N_l-N}{N}.
	\end{align*}
	Since this inequality holds for all $t\in (0,1),$ by taking integral and noting \eqref{NKP:proof:eq1}, our assertion follows.
	
\end{proof}

\subsection{Free Energy of a Balanced Multi-Species Model}

Fix an even integer $p\geq 2$ and positive integers $k,n$ with $k<n$.  
Consider the multi-species model defined as follows. Let $\mathscr{S}=\{0,1,\ldots,n-1\}$ be the set of species labels. For any $l\geq 1,$ denote $N=nl.$ Let
$\mathcal{I}_s=\{ls,ls+1,\ldots, ls+l-1\}$ be the set of species belonging to the $s$-th species and set the corresponding overlap as
$$
R_{\MS,s}(\sigma^1,\sigma^2)=\frac{1}{l}\sum_{j=0}^{l-1}\sigma_{ls+j}^1\sigma_{ls+j}^2.
$$
The multi-species model we are interested in has the Hamiltonian  $(H_{\MS,l}^p(\sigma))_{\sigma\in\Sigma_N}$ with covariance
\begin{align}\label{HamiltonianMSp}
	\e H_{\MS,l}^{p}(\sigma^1)H_{\MS,l}^{p}(\sigma^2)=N\xi\bigl(R_{\MS,0}(\sigma^1,\sigma^2),\ldots,R_{\MS,n-1}(\sigma^1,\sigma^2)\bigr),
\end{align}
where for all $x=(x_0,\ldots,x_{n-1})\in \mathbb{R}^n,$
$$
\xi(x):=\frac{1}{n}\sum_{s=0}^{n-1}\Bigl(\frac{x_{s}+x_{s+1}+\cdots+x_{s+k-1}}{k}\Bigr)^p$$
and the indices live on a periodic ring of length $n$ so that $x_{a}=x_{a+n}$ for all $0\leq a\leq n-1.$ When $p=2$, this is an example of the general multi-species Sherrington-Kirkpatrick model in \cite{barra2015multi,panchenko2015free}. Define the free energy as $$
F_{\MS,l}^p(\beta)=\frac{1}{N}\ln\sum_{\sigma\in \Sigma_N}e^{\beta H_{\MS,l}^p(\sigma)}.$$
The following proposition holds.

\begin{proposition}\label{ParisiFormula:NKP}
	We have that for any $\beta>0,$ 
	$
	\lim_{l\to\infty}\e F_{\MS,l}^p(\beta)
	$
	exists.
	Furthermore, 
	\begin{align*}
		\lim_{l\to\infty}\e F_{\MS,l}^p(\beta)&=\ln 2+\frac{\beta^2}{2},\,\,\forall 0<\beta\leq \beta_p,\\
		\lim_{l\to\infty}\e F_{\MS,l}^p(\beta)&\geq \ln2+\beta\beta_p-\frac{\beta_p^2}{2},\,\,\forall \beta>\beta_p,
	\end{align*}
	where $\beta_p>0$ is defined as \begin{align}\label{add:thm10:eq1}
		\beta_p^2=\inf_{0<u<1}(1+u^{-p})I(u)
	\end{align}
	for
	\begin{align*}
		I(u):=\frac{1}{2}\bigl((1+u)\ln (1+u)+(1-u)\ln (1-u)\bigr),\,\,\forall u\in [0,1].
	\end{align*}
\end{proposition}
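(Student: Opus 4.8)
The plan is to sandwich $\e F_{\MS,l}^p(\beta)$ between an annealed upper bound and a Gaussian‑comparison lower bound that decouples the $n$ species, and then to reduce everything to a single pure $p$-spin model treated by the second‑moment scheme of Section~\ref{sec:mmomentmethod}. For the upper bound, since $\xi(1,\dots,1)=1$ we have $\e H_{\MS,l}^p(\sigma)^2=N$, so Jensen gives
\begin{equation*}
\e F_{\MS,l}^p(\beta)\le\frac1N\ln\e\sum_{\sigma}e^{\beta H_{\MS,l}^p(\sigma)}=\ln2+\frac{\beta^2}{2},\qquad\forall\beta>0.
\end{equation*}
For the lower bound I would use that, $p$ being even, $t\mapsto t^p$ is convex, so by Jensen the kernel in \eqref{HamiltonianMSp} satisfies $\xi(R)\le\frac1n\sum_{s=0}^{n-1}R_{\MS,s}^p$ for every pair, with equality on the diagonal. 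Letting $H'$ be the centered field on $\Sigma_N$ with covariance $N\cdot\frac1n\sum_s R_{\MS,s}^p$ — exactly the $k=1$ instance of \eqref{HamiltonianMSp}, i.e.\ $n$ independent pure $p$-spin models on $l$ spins — and noting that the mixed second derivatives of $x\mapsto\ln\sum_\sigma e^{\beta x_\sigma}$ are nonpositive, Kahane's Gaussian comparison inequality gives
\begin{equation*}
\e F_{\MS,l}^p(\beta)\ge\frac1N\e\ln\sum_{\sigma}e^{\beta H'(\sigma)}=\e F^{\mathrm{sg}}_l(\beta),
\end{equation*}
where $F^{\mathrm{sg}}_l$ is the single pure $p$-spin free energy on $l$ spins (the product structure of $H'$ over species factorizes the middle quantity into $n$ identical single‑species free energies).

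It then suffices to control the single $p$-spin model (say on $N$ spins). Writing $L_N(s)=|\{\sigma:H(\sigma)\ge sN\}|$ one has $\e L_N(s)=2^N\Phi(-s\sqrt N)$, and grouping $\e L_N(s)^2$ by the overlap and applying Lemma~\ref{lem1} with correlation $t=R^p$ yields
\begin{equation*}
\frac{\e L_N(s)^2}{(\e L_N(s))^2}\le 1+o(1)+\sum_{u}e^{N\left(s^2\frac{u^p}{1+u^p}-I(u)\right)},
\end{equation*}
with $I$ as in the statement. The exponent is strictly negative for all $u\ne0$ precisely when $s^2<\inf_{0<u<1}(1+u^{-p})I(u)=\beta_p^2$; the decisive point is that the bivariate‑tail exponent $1/(1+t)$ of Lemma~\ref{lem1} produces the factor $u^p/(1+u^p)$ instead of the naive $u^p$, which is exactly what upgrades the usable range from $\inf_u I(u)/u^p$ to $\beta_p^2$. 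Hence for every $s<\beta_p$, Paley--Zygmund together with the Gaussian concentration of the free energy (Lemma~\ref{lem3}) gives $\frac1N\ln L_N(s)\to\ln2-\frac{s^2}{2}$, exactly as in the proof of Theorem~\ref{thm2}.

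From this concentrated count I would read off both regimes via $F_N(\beta)\ge\beta s+\frac1N\ln L_N(s)$: optimizing $\beta s+\ln2-\frac{s^2}{2}$ over $s\in(0,\beta_p)$ returns $\ln2+\frac{\beta^2}{2}$ for $\beta\le\beta_p$ (take $s\uparrow\beta$) and the frozen value $\ln2+\beta\beta_p-\frac{\beta_p^2}{2}$ for $\beta>\beta_p$ (take $s\uparrow\beta_p$). With the annealed upper bound this identifies $\lim_l\e F^{\mathrm{sg}}_l$ as $\ln2+\frac{\beta^2}{2}$ for $\beta\le\beta_p$ and lower‑bounds it by the frozen value for $\beta>\beta_p$; the comparison above and the matching upper bound then transfer both statements to $\e F_{\MS,l}^p$. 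Since for $\beta>\beta_p$ the sandwich no longer pins down the limit, existence for \emph{every} $\beta$ I would obtain from a Guerra--Toninelli superadditivity argument: the species are balanced (each of size $l$) and $\xi$ is convex on $\mathbb{R}^n$, so interpolating between a system with $l_1+l_2$ spins per species and the superposition of two independent subsystems has a derivative of definite sign, making $l\mapsto\e\ln\sum_\sigma e^{\beta H_{\MS,l}^p(\sigma)}$ superadditive; Fekete's lemma then gives the limit (this is where the multi‑species references \cite{barra2015multi,panchenko2015free} enter).

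The step I expect to be the main obstacle is securing the \emph{sharp} threshold $\beta_p$ rather than the weaker $\inf_u I(u)/u^p$. A direct second moment on the multi‑species level set does not reach $\beta_p$: already for $k=1,\ n=2$ the off‑diagonal profile $(u,0)$ makes $s^2\frac{\xi}{1+\xi}-\frac1n\sum_sI(u_s)$ positive for some $s<\beta_p$, so the naive count is genuinely lossy. This is precisely why the Kahane decoupling must come first — on the single‑species model no off‑diagonal profiles occur, the optimization collapses to the one‑dimensional $\inf_{0<u<1}(1+u^{-p})I(u)$, and the precise tail constant of Lemma~\ref{lem1} supplies the matching factor. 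Checking the convexity and balance hypotheses behind the superadditivity (hence existence in the low‑temperature regime) is the remaining technical point.
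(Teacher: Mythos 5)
Your argument is correct, but it follows a genuinely different route from the paper's. The paper proves the proposition by citing the balanced multi-species result of Bates--Sohn (\cite[Example 1.2(c) and Corollary 2.2]{bates2025balanced}) to get the \emph{exact identity} $\lim_l\e F_{\MS,l}^p(\beta)=\lim_N\e F_N^p(\beta)$ for all $\beta$ (which also settles existence), and then imports the single-species facts from the literature: $F_\infty^p(\beta)=\ln2+\beta^2/2$ for $\beta\le\beta_p$ from \cite[Theorem 16.3.1]{talagrand2011mean}, differentiability from \cite{Panchenko2008}, and the $\beta>\beta_p$ bound from convexity of $\beta\mapsto F_\infty^p(\beta)$ via the tangent line at $\beta_p$. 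You instead sandwich: the annealed bound from above, and from below a Kahane comparison (using $\xi(x)\le\frac1n\sum_s x_s^p$ by convexity of $t\mapsto t^p$, with equal diagonals and nonpositive off-diagonal second derivatives of $\ln\sum_\sigma e^{\beta x_\sigma}$) against the fully decoupled field, whose free energy factorizes exactly into $n$ copies of the pure $p$-spin model on $l$ spins; you then re-derive Talagrand's threshold by the second-moment computation and obtain both regimes from $F_N(\beta)\ge\beta s+\frac1N\ln L_N(s)$, with existence for all $\beta$ supplied separately by Guerra--Toninelli superadditivity (valid here since $\xi$ is convex and the species proportions are fixed). Your version is more self-contained --- it avoids the recent preprint and the Parisi-formula/differentiability machinery --- at the cost of yielding only a one-sided comparison with the single-species model for $\beta>\beta_p$; since the proposition (and its use in the proof of Theorem \ref{thm:Fnk-Mnk}, where one sends $p\to\infty$ and $\beta_p\to\beta_c$) only requires equality up to $\beta_p$ and a lower bound beyond it, this loss is immaterial. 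Your diagnosis that a direct second moment at the multi-species level is lossy (off-diagonal overlap profiles such as $(u,0,\dots,0)$ beat the one-dimensional threshold) and that the exact factorization must precede the counting is accurate and is implicitly why both proofs route through a single-species reduction. One small point of care: Paley--Zygmund plus concentration of $F_{N,K}(\beta)$ directly yields the free-energy lower bound $\ln2-\frac{s^2}{2}+\beta s$ as in the paper's proof of Theorem \ref{thm2}; the stronger claim that $\frac1N\ln L_N(s)$ itself converges also needs the easy Markov upper bound on $L_N(s)$, but nothing in your argument actually depends on that stronger form.
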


\begin{proof}
	Note that $\xi$ is a convex and homogeneous polynomial of degree $p$. Furthermore, since $x_a=x_{a+n}$ for all $0\leq a\leq n-1$, the model is balanced in the sense that 
	\begin{align*}
		\frac{\partial\xi}{\partial{x_a}}(1,\ldots,1)
	\end{align*}
	is independent of $0\leq a\leq n-1.$ As a result, from \cite[Example 1.2(c) and Corollary 2.2]{bates2025balanced}, the limiting free energy in the multi-species model is equal to that of a one-dimensional system, more precisely,
	\begin{align*}
		F_{\MS,\infty}^p(\beta):=	\lim_{l\to\infty}\e F_{\MS,l}^p(\beta)=\lim_{N\to\infty}\e F_{N}^p(\beta)=:F_\infty^p(\beta),
	\end{align*}
	where $F_N^p(\beta)$ is the free energy associated to the classical $p$-spin model, whose Hamitonian is given by $H_N^p$ with covariance structure $\e H_N^p(\sigma^1)H_N^p(\sigma^2)=NR(\sigma^1,\sigma^2)^p.$ This establishes the existence of the limit of $\e F_{\MS,l}^p(\beta).$ 
	
	Next,  it is well-known that $F_\infty^p(\beta)$ can be expressed as the Parisi formula and one major consequence of this expression is that it ensures the differentiability of $F_{\infty}^p(\beta)$ for all $\beta>0$, see from \cite{Panchenko2008}. From \cite[Theorem 16.3.1]{talagrand2011mean}, it was also known that $F_\infty^p(\beta)$ is equal to $\ln 2+\beta^2/2$ for all $\beta\leq \beta_p$, where $\beta_p$ is defined \eqref{add:thm10:eq1}. Consequently, 
	\begin{align*}
		%\label{NKP:proof:eq4}
		F_{\MS,\infty}^p(\beta)=\ln 2+\frac{\beta^2}{2},\,\,\forall 0<\beta<\beta_p.
	\end{align*}
	and using the convexity of $F_{\MS,\infty}^p(\beta),$
	\begin{align*}
		%\label{NKP:proof:eq5}
		F_{\MS,\infty}^p(\beta)\geq  F_{\MS,\infty}^p(\beta_p)+\frac{dF_{\MS,\infty}^p}{d\beta}(\beta_p)(\beta-\beta_p)=\ln 2+\beta_p\beta-\frac{\beta_p^2}{2},\,\,\forall \beta>\beta_p,
	\end{align*}
	completing our proof.
\end{proof}

\subsection{Proof of Theorem \ref{thm:Fnk-Mnk}}

We now turn to the proof of Theorem \ref{thm:Fnk-Mnk}. First of all, from Remark \ref{main:proof:thm4}, it remains to show the matching lower bound. 
Fix an arbitrary even $p\geq 2.$ Note that the Hamiltonians of our original $\NK$ model and the $p$-spin $\NK$ model satisfy
\begin{align*}
	\e  H_{N,K}(\sigma^1)H_{N,K}(\sigma^2)&=NQ(\sigma^1,\sigma^2)\\
	&\leq N\sum_{i=0}^{N-1}R_{N,K,i}(\sigma^1,\sigma^2)^p=\e H_{N,K}^p(\sigma^1)H_{N,K}^p(\sigma^2)
\end{align*}
and $\e H_{N,K}(\sigma)^2=\e H_{N,K}^p(\sigma)^2=N.$ Using these, it can be argued, similar to the last subsection, by considering the analogous interpolated free energy between $H_{N,K}$ and $H_{N,K}^p$ and computing its derivative similar to \eqref{additional:eq2}  and \eqref{additional:eq1} to deduce that $$
\e F_{N,K}(\beta)\geq \e F_{N,K}^p(\beta).$$ 
It follows from Lemma \ref{NKP:lemma1} that for any two positive integers $k,n$ with $ \alpha\leq k/n\leq  1,$
\begin{align}\label{additional:eq3}
	\liminf_{N\to\infty}\e F_{N,K}(\beta)\geq	\liminf_{N\to\infty}\e F_{N,K}(\beta)\geq \liminf_{l\to\infty}\e F_{N_l,K_l}(\beta)-\frac{C}{\alpha}(k/n-\alpha),
\end{align}
where for $l:=\rceil N/n\rceil$, $N_l=ln$ and $K_l=lk.$

Next, observe that for any $ls\leq i<l(s+1)$ for some $0\leq s\leq n-1,$ we have that $l(s+k)\leq i+K_l<l(s+k+1)$ so that
\begin{align*}
	R_{N_l,K_l,i}(\sigma^1,\sigma^2)
	&=\frac{1}{K_l+1}\sum_{j=0}^{l(s+1)-1-i}\sigma_{i+j}^1\sigma_{i+j}^2+\frac{K_l}{K_l+1}\frac{1}{k}\sum_{r=s+1}^{s+k-1}R_{\MS,r}(\sigma^1,\sigma^2)\\
	&\qquad\qquad+\frac{1}{K_l+1}\sum_{j=0}^{i-ls}\sigma_{l(s+k)+j}^1\sigma_{l(s+k)+j}^2,
\end{align*}
which implies that
\begin{align*}
	\Bigl|R_{N_l,K_l,i}(\sigma^1,\sigma^2)-\frac{1}{k}\sum_{r=s+1}^{s+k-1}R_{\MS,r}(\sigma^1,\sigma^2)\Bigr|&\leq \frac{2l}{K_l}+\frac{1}{K_l}\leq \frac{3}{k}.  
\end{align*}
Denote by 
\begin{align}\label{thm10:add:eq2}
	F(t)=\frac{1}{N}\e\ln \sum_{\sigma}e^{\beta(\sqrt{t}H_{N_l,K_l}^p(\sigma)+\sqrt{1-t}H_{\MS,l}(\sigma))},\,\,t\in [0,1].
\end{align}
Then using the Gaussian integration by parts and $|x^p-y^p|\leq p|x-y|$ for all $x,y\in [-1,1],$ we have
\begin{align*}
	|F'(t)|&=\frac{\beta^2}{2N_l}\Bigl|\e \Bigl\la \sum_{s=0}^{n-1}l\Bigl(\frac{1}{k}\sum_{r=0}^{k-1}R_{\MS,r}(\sigma^1,\sigma^2)\Bigr)^p-\sum_{i=0}^{N_l-1}\bigl(R_{N_l,K_l,i}(\sigma^1,\sigma^2)\bigr)^p\Bigr\ra_{t}\Bigr|\leq \frac{3p\beta^2}{2k},
\end{align*}
where $\la \cdot\ra_t$ is the Gibbs expectation associated to $F(t)$ and $\sigma^1,\sigma^2$ are i.i.d. samples from this measure. Therefore,
\begin{align}\label{NKP:proof:eq2}
	\bigl|\e F_{N_l,K_l}^p(\beta)-\e F_{\MS,l}^p(\beta)\bigr|\leq \frac{3p\beta^2}{2k}.
\end{align}
Consequently, from  \eqref{additional:eq3} and Proposition \ref{ParisiFormula:NKP}, by letting $k,n\to\infty$ along $\alpha\leq k/n\leq 1$ and $k/n\to \alpha,$ we have that for any even $p\geq 2,$
\begin{align*}
	\liminf_{N\to\infty}\e F_{N,K}(\beta)\geq \left\{
	\begin{array}{ll}
		\ln 2+\frac{\beta^2}{2},&\mbox{if $0<\beta\leq \beta_p,$}\\
		\\
		\ln 2+\beta_p\beta-\frac{\beta_p^2}{2},&\mbox{if $\beta>\beta_p.$}
	\end{array}
	\right.
\end{align*}

Finally, our proof will be completed if we can show that $\lim_{p\to\infty}\beta_p=\beta_c.$
Recall $\beta_p$ from \eqref{add:thm10:eq1}. The entropy function $I$ satisfies $I(0)=0,$ $I'(0)=0,$ and $I''(u)=1/(1-u^2)\geq 1.$ Thus,  $
I(u)\geq u^2/2.
$
Together with the fact that $I$ is strictly increasing, we have that for any $0<u_0<1,$
\begin{align*}
	\beta_p^2&\geq \min\Bigl(\inf_{0<u\leq u_0}\frac{(1+u^{-p})u^2}{2},\inf_{u_0\leq u<1}(1+u^{-p})I(u)\Bigr)\\
	&\geq \min\Bigl(\frac{u_0^{-(p-2)}}{2},2I(u_0)\Bigr),
\end{align*}
which implies that
\begin{align*}
	\liminf_{p\to\infty}\beta_p^2\geq 2I(u_0).
\end{align*}
Since this holds for all $0<u_0<1,$ we conclude that $\lim_{p\to\infty}\beta_p^2\geq \lim_{u_0\uparrow 1}2I(u_0)=\beta_c^2$ and thus, $\lim_{p\to \infty}\beta_p^2=\beta_c^2.$ This completes our proof.

\section{Proof of Overlap Gap Properties}\label{Sec3}

In this section, we establish the overlap gap properties as stated in Theorems \ref{thm:Q-high-epi}, \ref{thm:nearfittest-highepi} and \ref{thm:near-fittest-lowepi}. 
For any nonempty $S\subseteq \Sigma_N\times\Sigma_N,$ set
\begin{align*}
	\overlineit{F}_{N,K}(\beta,S)&=    \frac{1}{N}\ln \sum_{(\sigma^1,\sigma^2)\in S}e^{\beta (H_{N,K}(\sigma^1)+H_{N,K}(\sigma^2))}
\end{align*}
and recall $\overlineit{M}_{N,K}(S)$ from \eqref{def:barM}. Note that similar to \eqref{eqn:GSEbounds} and Lemma \ref{lem3}, we also have 
\begin{align}\label{eqn:barGSEbounds}
	\overlineit{M}_{N,K}(S)\leq \frac{1}{\beta}\overlineit{F}_{N,K}(\beta,S)\leq \overlineit{M}_{N,K}(S)+\frac{2\ln 2}{N},\,\,\forall \beta>0
\end{align}
and the following concentration inequality holds, whose proof is omitted the proof.
\begin{lemma}
    \label{concentration:barversion}
     For any $\beta>0$, $N\geq 1$, $K\geq 1,$ and nonempty $S\subset \Sigma_N\times\Sigma_N,$ we have that for any $t>0,$
	\begin{align*}
		\p\bigl(|\overlineit{F}_{N,K}(\beta,S)-\e \overlineit{F}_{N,K}(\beta,S)|\geq t\bigr)\leq 2e^{-\frac{Nt^2}{16\beta^2}},\\
		\p\bigl(|\overlineit{M}_{N,K}(S)-\e \overlineit{M}_{N,K}(S)|\geq t\bigr)\leq 2e^{-\frac{Nt^2}{16}}.
	\end{align*}
\end{lemma}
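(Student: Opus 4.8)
The plan is to reproduce the proof of Lemma \ref{lem3} almost verbatim, the sole new feature being that the coupled quantities $\overlineit F_{N,K}(\beta,S)$ and $\overlineit M_{N,K}(S)$ depend on two replicas, which doubles the relevant Lipschitz constant. First I would regard both as functions of the underlying disorder $x=(x_i(\eta))_{0\le i\le N-1,\,\eta\in\Sigma_{K+1}}$, a family of i.i.d.\ standard Gaussians, and write $H_{N,K}(\sigma;x)=\sum_{i=0}^{N-1}x_i(\sigma_i,\ldots,\sigma_{i+K})$ to display this dependence; note that $S$ is a fixed, non-random subset of $\Sigma_N\times\Sigma_N$. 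Setting $x(\sigma):=(x_i(\sigma_i,\ldots,\sigma_{i+K}))_{0\le i\le N-1}\in\mathbb{R}^N$, the elementary bound $\|x(\sigma)-y(\sigma)\|_2\le\|x-y\|_2$ together with Cauchy--Schwarz gives, uniformly over all pairs $(\sigma^1,\sigma^2)$,
\[
\bigl|\bigl(H_{N,K}(\sigma^1;x)+H_{N,K}(\sigma^2;x)\bigr)-\bigl(H_{N,K}(\sigma^1;y)+H_{N,K}(\sigma^2;y)\bigr)\bigr|\le 2\sqrt{N}\,\|x-y\|_2.
\]

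For the free-energy bound I would write $\Phi(x):=\frac1N\ln\sum_{(\sigma^1,\sigma^2)\in S}e^{\beta(H_{N,K}(\sigma^1;x)+H_{N,K}(\sigma^2;x))}$, so that $\Phi$ evaluated at the true disorder equals $\overlineit F_{N,K}(\beta,S)$. Exponentiating the preceding display termwise shows that each summand at $x$ is at most $e^{2\beta\sqrt N\|x-y\|_2}$ times the corresponding summand at $y$; summing over $(\sigma^1,\sigma^2)\in S$ and taking $\tfrac1N\ln$ yields
\[
\Phi(x)\le\Phi(y)+\frac{2\beta}{\sqrt{N}}\,\|x-y\|_2,
\]
and by symmetry $\Phi$ is $\tfrac{2\beta}{\sqrt N}$-Lipschitz on $\mathbb{R}^{N2^{K+1}}$. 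The Gaussian concentration inequality for Lipschitz functions (Proposition 1.3.5 in \cite{talagrand2010mean}) then gives $\p(|\overlineit F_{N,K}(\beta,S)-\e\overlineit F_{N,K}(\beta,S)|\ge t)\le 2e^{-Nt^2/(16\beta^2)}$, the denominator $16\beta^2$ being four times the square of the Lipschitz constant $2\beta/\sqrt N$.

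For the maximum-fitness bound I would note that $\overlineit M_{N,K}(S)$ equals $\Psi(x):=\max_{(\sigma^1,\sigma^2)\in S}\tfrac1N(H_{N,K}(\sigma^1;x)+H_{N,K}(\sigma^2;x))$, a maximum over the finite set $S$ of linear functions of $x$, each of which is $\tfrac{2}{\sqrt N}$-Lipschitz by the same Cauchy--Schwarz estimate. Since a maximum of $L$-Lipschitz functions is again $L$-Lipschitz, $\Psi$ is $\tfrac2{\sqrt N}$-Lipschitz, and Gaussian concentration gives $\p(|\overlineit M_{N,K}(S)-\e\overlineit M_{N,K}(S)|\ge t)\le 2e^{-Nt^2/16}$. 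I do not expect any genuine obstacle: the entire argument is a transcription of Lemma \ref{lem3}, and the only point demanding care is correctly tracking the factor of two in the Lipschitz constants (hence the $16\beta^2$ and $16$ in the exponents, versus the $4\beta^2$ and $4$ of the single-replica Lemma \ref{lem3}), which originates solely from the coupled two-replica structure and is insensitive to $S$, $K$, and the geometry of the constraint.
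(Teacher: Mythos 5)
Your proof is correct and is exactly the argument the paper has in mind: the paper omits the proof of Lemma \ref{concentration:barversion} precisely because it is the proof of Lemma \ref{lem3} with the Lipschitz constant doubled by the two-replica sum $H_{N,K}(\sigma^1)+H_{N,K}(\sigma^2)$, which is what you carry out. Your constants ($2\beta/\sqrt{N}$ and $2/\sqrt{N}$, hence $16\beta^2$ and $16$ in the exponents) are tracked correctly and consistently with the normalization of the Gaussian concentration inequality used for Lemma \ref{lem3}.
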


\subsection{Proof of Theorem \ref{thm:Q-high-epi}}\label{Subsec:thm2}

Assume that $3-2\sqrt{2}<\alpha\leq 1.$ We begin by stating the following useful lemma:

\begin{lemma}[Griffith]
	Let $(F_{N})_{N\geq 1}$ be a sequence of convex and differentiable functions defined on $[0,\infty).$ Assume that $f(x):=\lim_{N\to\infty}F_{N}(x)$ exists for all $x\in [0,\infty).$ If $f$ is differentiable at some $x_0\in (0,\infty),$ then $\lim_{N\to\infty}F_{N}'(x_0)=f'(x_0).$
\end{lemma}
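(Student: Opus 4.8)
The plan is to use the standard sandwiching of the derivative of a convex function between its backward and forward difference quotients, and then to pass to the limit in $N$ and in the increment separately. Fix $x_0\in(0,\infty)$ and choose any increment $h$ with $0<h<x_0$, so that $x_0-h$ and $x_0+h$ both lie in the domain $[0,\infty)$. For each $N$, convexity and differentiability of $F_N$ give the subgradient (tangent-line) inequalities $F_N(x_0\pm h)\ge F_N(x_0)\pm h\,F_N'(x_0)$, which rearrange into the two-sided bound
\begin{align}\label{griffith:sandwich}
	\frac{F_N(x_0)-F_N(x_0-h)}{h}\le F_N'(x_0)\le \frac{F_N(x_0+h)-F_N(x_0)}{h}.
\end{align}

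First I would let $N\to\infty$ in \eqref{griffith:sandwich}. Because $F_N\to f$ pointwise, the two outer quotients converge to the corresponding difference quotients of $f$, yielding
\begin{align*}
	\frac{f(x_0)-f(x_0-h)}{h}\le \liminf_{N\to\infty}F_N'(x_0)\le \limsup_{N\to\infty}F_N'(x_0)\le \frac{f(x_0+h)-f(x_0)}{h}.
\end{align*}

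Next I would send $h\downarrow 0$. Since $f$ is differentiable at $x_0$, the left-most quotient tends to the left derivative and the right-most quotient to the right derivative, both equal to $f'(x_0)$; this squeezes $\liminf_{N\to\infty}F_N'(x_0)$ and $\limsup_{N\to\infty}F_N'(x_0)$ to the common value $f'(x_0)$, which gives $\lim_{N\to\infty}F_N'(x_0)=f'(x_0)$. I expect no substantive obstacle in this argument; the only points requiring minor care are that $x_0>0$ is used so that the backward quotient is admissible within $[0,\infty)$, and that $f$, being a pointwise limit of convex functions, is itself convex, so its one-sided derivatives exist automatically and the differentiability hypothesis is needed only to force them to coincide.
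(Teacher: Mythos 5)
Your proof is correct. The paper states this lemma without proof, invoking it as the classical Griffith (convexity) lemma, and your argument --- sandwiching $F_N'(x_0)$ between the backward and forward difference quotients via the tangent-line inequality for convex differentiable functions, then letting $N\to\infty$ followed by $h\downarrow 0$ and using differentiability of $f$ at $x_0$ to squeeze --- is precisely the standard proof of that classical fact, with the hypotheses $x_0>0$ (so the backward quotient is admissible) and differentiability of $f$ used exactly where needed.
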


Note that by Gaussian integration by parts, it can be computed directly that $\beta (1-\e\la Q_{1,2}\ra_\beta)=\e F_{N,K}'(\beta).$ From Theorem \ref{thm:Fnk-Mnk} and the Griffith lemma, \eqref{eqn:Q-high-epi-hightemp} holds since
\begin{align}\label{thm1:proof:eq1}
	\lim_{N\to\infty}\beta(1-\e\la Q_{1,2}\ra_\beta) =\lim_{N\to\infty}\e F_{N,K}'(\beta)=F'(\beta)=\left\{
	\begin{array}{ll}
		\beta,&\mbox{if $\beta\leq \beta_c$},\\
		\beta_c,&\mbox{if $\beta>\beta_c$}.
	\end{array}\right.
\end{align}
Next, consider $\beta> \beta_c.$ If $\alpha=1,$ then \eqref{eqn:Q-high-epi-lowtemp-1} and \eqref{eqn:Q-high-epi-lowtemp-2} also hold from \eqref{thm1:proof:eq1} since $Q_{1,2}\in \{0,1\}$. Thus, for the remaining of this part of the proof, we assume that $\alpha_*<\alpha<1.$ 
Note that from Lemma~\ref{lem2}, the values of $Q_{1,2}$ can only be from $\{0,1/N,2/N,\ldots,(N-K)/N,1\}.$ Also, if $t=i/N$ for some $1\leq i\leq N-K$, then  $\e (M_{N,K}(\sigma^1)+M_{N,K}(\sigma^2))^2=2N(1+t)$ for any $(\sigma^1,\sigma^2)$ satisfying $Q_{1,2}=t$ and
\begin{align}\label{thm3.1:proof:eq5}
	\ln |\{Q_{1,2}=t\}|\leq \ln (N2^{2N-K-Nt})=(2N-K-Nt)\ln 2+\ln N.
\end{align}
It follows from Lemma \ref{main:proof:lem2} that
\begin{align}
	\nonumber
	\e F_{N,K}(\beta,\{Q_{1,2}=t\})&\leq \frac{\beta\sqrt{2N(1+t)}\cdot\sqrt{2\ln |\{Q_{1,2}=t\}|}}{N}\\
	\nonumber   &\leq 2\beta\frac{\sqrt{N(1+t)(2N-K-Nt+\ln N)\ln 2}}{N}\\
	\label{thm1:proof:eq5}   &\leq 2\beta \sqrt{(1+t)(2-\alpha-t+\epsilon_N)\ln 2},
\end{align}
where we used $K \geq \alpha(N-1)-1\geq \alpha N-2$ and denoted $\epsilon_N=N^{-1}(2+\ln N).$ Note that
\begin{align}\label{additional:eq10}
	\sup_{0<t\leq 1-\alpha}(1+t)(2-\alpha-t)=\Bigl(\frac{3-\alpha}{2}\Bigr)^2
\end{align}
and the maximum is attained at $t=(1-\alpha)/2.$ From \eqref{thm1:proof:eq5}, we have
\begin{align}
	\nonumber  \max_{t=i/N,1\leq i\leq N-K}\e \overlineit{F}_{N,K}(\beta,\{Q_{1,2}=t\})&\leq \beta (3-\alpha)\sqrt{\ln 2}+2\beta \sqrt{2\epsilon_N\ln 2}\\
	\label{thm1:proof:eq3}    &\leq \beta (3-\alpha)\sqrt{\ln 2}+4\beta \sqrt{\epsilon_N},
\end{align}
where the first inequality used $\sqrt{a+b}\leq \sqrt{a}+\sqrt{b}$ for all $a,b\geq 0,$ while the second one used $2\sqrt{2\ln 2}<4.$
Since the free energies are concentrated as in Lemma \ref{lem3}, by using union bounds, we have that for any $\varepsilon>0$, as long as $N$ is large enough, with probability at least
$
1-2Ne^{-\varepsilon^2N/16\beta^2},
$
\begin{align}\label{additional:eq11}
	\max_{t=i/N,1\leq i\leq N-K}\overlineit{F}_{N,K}(\beta,\{Q_{1,2}=t\})\leq  \beta (3-\alpha)\sqrt{\ln 2}+4\beta \sqrt{\epsilon_N}+\varepsilon.
\end{align}
Next, note that for any $a_1,\ldots,a_k> 0,$
$$
\max(\ln a_1,\ldots,\ln a_k)\leq \ln (a_1+\cdots+a_k)\leq \ln k+\max(\ln a_1,\ldots,\ln a_k)
$$
and that $Q_{1,2}\leq (N-K-1)/N.$ On the same event that \eqref{additional:eq11} holds, we have
\begin{align}
	\nonumber\overlineit{F}_{N,K}(\beta,\{0<Q_{1,2}<1\})&\leq \beta (3-\alpha)\sqrt{\ln 2}+4\beta \sqrt{\epsilon_N}+\varepsilon+\frac{\ln N}{N}\\
	\label{additional:eq8} &=2\beta\beta_c-\beta (2\beta_c-(3-\alpha)\sqrt{\ln 2})+4\beta \sqrt{\epsilon_N}+\varepsilon+\frac{\ln N}{N}.
\end{align}
On the other hand, recall from Theorem \ref{thm:Fnk-Mnk} and Lemma \ref{lem3} that for any $\varepsilon>0$, with probability at least $1-2e^{-N\varepsilon^2/4\beta^2},$ $\beta\beta_c\leq F_{N,K}(\beta)+2\varepsilon.$ From these, we conclude that on an event with probability at least $1-2Ne^{-\varepsilon^2N/16\beta^2}-2e^{-\varepsilon^2N/4\beta^2},$ 
\begin{align}\label{thm1:proof:eq2}
	\la I(0<Q_{1,2}<1)\ra_\beta&\leq Ne^{-\beta N(2\beta_c-(3-\alpha)\sqrt{\ln 2})+N(4\beta \sqrt{\epsilon_N}+5\varepsilon)}.
\end{align}
Since $\alpha>\alpha_*,$ we have $2\beta_c>(3-\alpha)\sqrt{\ln 2}$ and this implies that as long as $\varepsilon$ is small enough,
\begin{align*}
	\lim_{N\to\infty}\e \la I(0<Q_{1,2}<1)\ra_\beta=0.
\end{align*}
Consequently, from \eqref{thm1:proof:eq1},
\begin{align*}
	\lim_{N\to\infty}\e \la I(Q_{1,2}=1) \ra_\beta=\lim_{N\to\infty}\e \la Q_{1,2}\ra_\beta=1-\frac{\beta_c}{\beta},
\end{align*}
Since
\begin{align*}
	\lim_{N\to\infty}\bigl(\e \la I(Q_{1,2}=1) \ra_\beta+\e \la I(Q_{1,2}=0)\ra_\beta\bigr)=1,
\end{align*}
we also have
\begin{align*}
	\lim_{N\to\infty}\e \la I(Q_{1,2}=0) \ra_\beta=\frac{\beta_c}{\beta}.
\end{align*}
These complete the proofs of \eqref{eqn:Q-high-epi-hightemp}, \eqref{eqn:Q-high-epi-lowtemp-1}, and \eqref{eqn:Q-high-epi-lowtemp-2}.

\smallskip

\subsection{Proof of Theorem \ref{thm:nearfittest-highepi}}

Let $\alpha_*<\alpha\leq 1$ be fixed. 
From \eqref{eqn:barGSEbounds} and \eqref{additional:eq8},
with probability at least $1-2Ne^{-\varepsilon^2N/16}$,
\begin{align*}
	\overlineit{M}_{N,K}(\{0<Q_{1,2}<1\})\leq 2\lim_{N\to\infty}\e M_{N,K}-(2\beta_c-(3-\alpha)\sqrt{\ln 2})+4\sqrt{\epsilon_N}+5\varepsilon+\frac{\ln N}{\beta N}
\end{align*}
and this readily leads to \eqref{eqn:nearfittest-highepi-Q}.

Next, we establish \eqref{eqn:nearfittest-highepi-R}. For any admissible value $r\in [-1,1]$ of $R_{1,2}$,
note that by releasing the constraint $Q_{1,2}=0$ and using the symmetry in spin configurations, we have
\begin{align*}
	|\{Q_{1,2}=0,R_{1,2}=r\}|\leq 2^N\Bigl|\Bigl\{\sigma:\frac{1}{N}\sum_{i=0}^{N-1}\sigma_i=r\Bigr\}\Bigr|\leq 2^N\binom{N}{ \frac{N(1+|r|)}{2}}\leq 2^{Nh(|r|)},
\end{align*}
where $h(\cdot)$ is defined at \eqref{eqn:def-h}.
Consequently, from Lemma \ref{main:proof:lem2},
\begin{align}
	\nonumber    \e \overlineit{M}_{N,K}(\{Q_{1,2}=0,R_{1,2}=r\})&\leq 2\sqrt{h(r)\ln 2}\\
	\label{add:thm4:proof:eq1}    &=2\lim_{N\to\infty}\e M_{N,K}-2(\beta_c-\sqrt{h(|r|)\ln 2}).
\end{align}
Hence,
\begin{align}
	\nonumber   \max_{|r|\geq \delta} \e \overlineit{M}_{N,K}(\{Q_{1,2}=0,R_{1,2}=r\})
	&=2\lim_{N\to\infty}\e M_{N,K}-2(\beta_c-\sqrt{h(\delta)\ln 2}),
\end{align}
where the supremum is taken over all admissible $r$ with $|r|\geq \delta$ and we used the fact that $h(\delta)$ is strictly decreasing on $[0,1]$. Now using Lemma \ref{concentration:barversion}, we can argue as before to conclude that
\begin{align}
	\label{additional:eq9}  \limsup_{N\to\infty} \e \overlineit{M}_{N,K}(\{Q_{1,2}=0,|R_{1,2}|\geq \delta\})
	&\leq 2\lim_{N\to\infty}\e M_{N,K}-2(\beta_c-\sqrt{h(\delta)\ln 2}).
\end{align}
Note that for any $0<\delta<1,$
\begin{align*}
	\overlineit{M}_{N,K}(\{\delta<|R_{1,2}|< 1\})&\leq \max\bigl(\overlineit{M}_{N,K}(\{Q_{1,2}=0,|R_{1,2}|>\delta\}),\overlineit{M}_{N,K}(\{0<Q_{1,2}<1\})\bigr).
\end{align*}
From \eqref{eqn:nearfittest-highepi-Q},  \eqref{additional:eq9}, and again Lemma \ref{concentration:barversion} yields \eqref{eqn:nearfittest-highepi-R}.

\subsection{Proof of Corollary \ref{cor1.1} }

Assume $\alpha_*<\alpha\leq 1$. Let $0<\delta<1$ be fixed. Recall $\varepsilon$ from \eqref{cor1.1:eq1}. From Theorem \ref{thm:nearfittest-highepi}, as long as $N$ is large enough, there exists an event $E_{N}$ of probability at least $1-4e^{-\varepsilon^2 N/16}-2e^{-\varepsilon^2 N/4}$ and on $E_N,$
$$
\overlineit{M}_{N,K}(\{0<Q_{1,2}<1\})\leq 2M_{N,K}+3\varepsilon+\varepsilon-7\varepsilon=2M_{N,K}-3\varepsilon
$$
and
$$
\overlineit{M}_{N,K}(\{\delta<|R_{1,2}|<1)\leq 2M_{N,K}+3\varepsilon+\varepsilon-7\varepsilon=2M_{N,K}-3\varepsilon,
$$
where in both inequalities, $3\varepsilon$ comes from Lemmas \ref{lem3} and \ref{concentration:barversion}, and $\varepsilon$ and $7\varepsilon$ arise due to Theorem~\ref{thm:nearfittest-highepi}.
Let $E_N'$ be the event in which there exist some distinct $\sigma,\sigma'\in \mathcal{L}_N(\varepsilon)$ such that either $0<Q(\sigma,\sigma')<1$ or $\delta<|R(\sigma,\sigma')|<1.$ On this event, either
\begin{align*}
	\overlineit{M}_{N,K}(\{0<Q_{1,2}<1\})&\geq 2 M_{N,K}-2\varepsilon
\end{align*}
or
\begin{align*}
	\overlineit{M}_{N,K}(\{\delta<|R_{1,2}|<1\})&\geq 2 M_{N,K}-2\varepsilon.
\end{align*}
This implies that $E_N'\subset E_N^c.$ Consequently, with probability at least $1-4e^{-\varepsilon^2 N/16}-2e^{-\varepsilon^2 N/4}$, $\mathcal{L}_N(\varepsilon)$ satisfies that for any distinct $\sigma,\sigma'\in \mathcal{L}_N(\varepsilon)$, $Q(\sigma,\sigma')=0$ and $|R(\sigma,\sigma')|\leq \delta$.

Next, we continue to bound the cardinality of $\mathcal{L}_N(\varepsilon).$ From Lemma \ref{lem3} and noting that the limiting free energy $F(\beta)$ of the $\NK$ model in Theorem \ref{thm:Fnk-Mnk} is differentiable in $\beta$, it follows from \cite[Theorem 1]{auffinger2018concentration} that for every $\beta>0$ and $0<\eta<\beta_c,$ there exist positive constants $\Gamma(\beta,\eta),$ $C(\beta,\eta),$ and $N(\beta,\eta)$ such that for all $N\geq N(\beta,\eta)$, with probability at least $1-C(\beta,\eta)e^{-N/C(\beta,\eta)},$ we have
\begin{align*}
	F(\beta)-\beta\eta\leq  F_{N,K}(\beta)\leq F(\beta)+\beta\eta
\end{align*}
and
\begin{align*}
	\Bigl\la I\Bigl(\Bigl|\frac{H_{N,K}(\sigma)}{N}-F'(\beta)\Bigr|\geq \eta\Bigr) \Bigr\ra\leq e^{-\Gamma(\beta,\eta)N},
\end{align*}
which after taking complement and logarithm leads to
\begin{align*}
	\frac{1}{N}\ln (1-e^{-\Gamma(\beta,\eta)N})+F(\beta)-\beta\eta&\leq \frac{1}{N}\ln (1-e^{-\Gamma(\beta,\eta)N})+F_{N,K}(\beta)\\
	&\leq \frac{1}{N}\ln\sum_{|H_{N,K}(\sigma)/N-F'(\beta)|< \eta} e^{\beta H_{N,K}(\sigma)}\\
	&\leq F_{N,K}(\beta)\leq F(\beta)+\beta\eta
\end{align*}
and thus,
\begin{align}
	\begin{split}\label{additional:eq4}
		\Bigl|\frac{1}{N}\ln\Bigl|\Bigl\{\sigma:\Bigl|\frac{H_{N,K}(\sigma)}{N}-F'(\beta)\Bigr|\leq \eta\Bigr\}\Bigr|-(F(\beta)-\beta F'(\beta))\Bigr|\leq 2\beta\eta-\frac{1}{N}\ln(1-e^{-\Gamma(\beta,\eta)N}).
	\end{split}
\end{align}
Note that by Theorem \ref{thm:Fnk-Mnk}, $F'(\beta)=\beta$ for all $0<\beta\leq \beta_c.$ 
From these, for any $0<\beta_1<\beta_2<\beta_c,$ letting $\eta=(\beta_2-\beta_1)/2$ and $\beta=(\beta_1+\beta_2)/2,$ \eqref{additional:eq4} can be written as
\begin{align}
	\begin{split}\label{additional:eq5}
		&\Bigl|\frac{1}{N}\ln\Bigl|\Bigl\{\sigma:\beta_1\leq \frac{H_{N,K}(\sigma)}{N}\leq \beta_2\Bigr\}\Bigr|-\Bigl(\ln 2-\frac{1}{2}\Bigl(\frac{\beta_1+\beta_2}{2}\Bigr)^2\Bigr)\Bigr|\\
		&\leq \frac{1}{2}(\beta_2^2-\beta_1^2)-\frac{1}{N}\ln(1-e^{-\Gamma(\beta,\eta)N}).
	\end{split}
\end{align}
For any $\eta',\eta''>0,$ from \eqref{additional:eq4} with $\beta=\beta_c$ and $\eta=\eta'$ and from \eqref{additional:eq5}, it follows by an covering argument that there exist positive constants $C'=C'(\varepsilon,\eta',\eta
'')$ and $N'=N'(\varepsilon,\eta',\eta'')$ such that for every $N\geq N'$, with probability at least $1-C'e^{-N/C'},$
\begin{align*}
	&\Bigl|\frac{1}{N}\ln\Bigl|\Bigl\{\sigma:\beta_c-\varepsilon\leq \frac{H_{N,K}(\sigma)}{N}\leq \beta_c+\eta'\Bigr\}\Bigr|-\max_{\beta_c-\varepsilon\leq \beta\leq \beta_c}\Bigl(\ln 2-\frac{\beta^2}{2}\Bigr)\Bigr|\leq 2\beta_c\eta'+\eta'',
\end{align*}
where $$
\max_{\beta_c-\varepsilon\leq \beta\leq \beta_c}\Bigl(\ln 2-\frac{\beta^2}{2}\Bigr)=\ln 2-\frac{1}{2}(\beta_c-\varepsilon)^2=\varepsilon\Bigl(\beta_c-\frac{\varepsilon}{2}\Bigr).
$$
Finally, note that from Lemma \ref{lem3} and Theorem \ref{thm:Fnk-Mnk}, if $\beta_c$ is substituted by $M_{N,K}$, the same inequality remains true with a different constant $C'$. This completes our  proof.

\subsection{Proof of Theorem \ref{thm:near-fittest-lowepi}}

Assume $0<\alpha\leq \alpha_*.$ Note that for $\beta\leq \beta_c$, \eqref{eqn:Q-high-epi-hightemp} holds by the same argument in Subsection \ref{Subsec:thm2}. For $\beta> \beta_c,$ the inequality \eqref{thm1:proof:eq5} still holds, but instead of \eqref{additional:eq10}, we now use that for any $0<\delta<c_1(\alpha)$, letting $\eta=\delta+2^{-1}\Delta(\alpha ),$ we have
\begin{align*}
	\max_{t\in [0,1-\alpha]:\bigl|t-\frac{1-\alpha}{2}\bigr|\geq \eta}(1+t)(2-\alpha-t)=\Bigl(\frac{3-\alpha}{2}\Bigr)^2-\eta^2 
\end{align*}
so that as in the proof of Theorem \ref{thm:Q-high-epi},
\begin{align}
	\nonumber    &\limsup_{N\to\infty}\e \overlineit{F}_{N,K}\Bigl(\beta,\Bigl(0,\frac{1-\alpha}{2}-\delta\Bigr]\bigcup\Bigl[\frac{1-\alpha}{2}+\delta,1\Bigr)\Bigr)\\
	\nonumber     &\leq 2\lim_{N\to\infty}\e F_{N,K}(\beta)-2\beta\Bigl(\beta_c-\sqrt{\Bigl(\Bigl(\frac{3-\alpha}{2}\Bigr)^2-\eta^2\Bigr)\ln 2}\Bigr)\\
	\label{additional:eq12}    &=2\lim_{N\to\infty}\e F_{N,K}(\beta)-2\beta\beta_c\Bigl(1-\sqrt{1-\frac{\delta(\Delta(\alpha)+\delta)}{2}}\Bigr),
\end{align}
where the last equality holds since
\begin{align*}
	\Bigl(\frac{3-\alpha}{2}\Bigr)^2-\eta^2&=\Bigl(\Bigl(\frac{3-\alpha}{2}\Bigr)^2-2\Bigr)+2-\eta^2\\
	&=\frac{\Delta(\alpha)^2}{4}+2-\Bigl(\frac{\Delta(\alpha)^2}{4}+\delta \Delta(\alpha)+\delta^2\Bigr)=2-\delta\bigl(\Delta(\alpha)+\delta\bigr).
\end{align*}
Observe that for any $0<\delta<c_1(\alpha),$
\begin{align*}
	0<\delta(\Delta(\alpha)+\delta)<c_1(\alpha)c_2(\alpha)=\alpha.
\end{align*}
Hence, we have
\begin{align*}
	\limsup_{N\to\infty}\e\Bigl\la I\Bigl(Q_{1,2}\in\Bigl(0,\frac{1-\alpha}{2}-\eta\Bigr]\bigcup\Bigl[\frac{1-\alpha}{2}+\eta,1\Bigr)\Bigr)\Bigr\ra_\beta=0.
\end{align*}
This establishes \eqref{thme.1:eq0} by noting that
$$
\frac{1-\alpha}{2}-\eta=c_1(\alpha)-\delta\,\,\mbox{and}\,\,\frac{1-\alpha}{2}+\eta=c_2(\alpha)+\delta.
$$
Also, from \eqref{eqn:GSEbounds}, \eqref{eqn:barGSEbounds}, and \eqref{additional:eq12}, we have  \eqref{thm:low-epi:eq1}.

Lastly for \eqref{thm3:eq3}, let $\delta\in (0,1)$ satisfy $
h(\delta)<1/(1-\alpha/2).
$
Note that for any $\delta'\in (0,c_1(\alpha))$ and admissible $r,$ we have
$$
|\{0\leq Q_{1,2}\leq c_2(\alpha)+\delta',R_{1,2}=r\}|\leq 2^{Nh(|r|)}.
$$
We can argue similar to \eqref{thm1:proof:eq5} by using Lemma \ref{main:proof:lem2} to get that
\begin{align*}
	&\limsup_{N\to\infty}\e \overlineit{M}_{N,K}(\{0\leq Q_{1,2}\leq c_2(\alpha)+\delta',|R_{1,2}|\geq \delta\})\\
	&\leq 2\sqrt{(1+c_2(\alpha)+\delta')h(\delta)\ln 2}\\
	&=2\lim_{N\to\infty}\e M_{N,K}-2\beta_c\Bigl(1-\sqrt{\frac{1+c_2(\alpha)+\delta'}{2}h(\delta)}\Bigr).
\end{align*}
Since
\begin{align*}
	\overlineit{M}_{N,K}(\{\delta<|R_{1,2}|<1\})&\leq \max\bigl(\overlineit{M}_{N,K}(\{c_2(\alpha)+\delta'<Q_{1,2}<1\}),\\
	&\qquad\quad
	\overlineit{M}_{N,K}(\{0\leq Q_{1,2}\leq c_2(\alpha)+\delta',\delta<|R_{1,2}|<1\})\bigr),
\end{align*}
using Lemma \ref{concentration:barversion} and \eqref{thm:low-epi:eq1}, we conclude that
\begin{align*}
	&\limsup_{N\to\infty}\e \overlineit{M}_{N,K}(\{\delta<|R_{1,2}|<1\})\\
	&\leq 2\lim_{N\to\infty}\e M_{N,K}\\
	&-2\beta_c\min\Bigl(1-\sqrt{1-\frac{\delta'(\Delta(\alpha)+\delta')}{2}},1-\sqrt{\frac{(1+c_2(\alpha)+\delta')}{2}h(\delta)}\Bigr).
\end{align*}
Sending $\delta'\uparrow c_1(\alpha),$ we see that
\begin{align*}
	\lim_{\delta'\uparrow c_1(\alpha)}\delta'(\Delta(\alpha)+\delta')&=c_1(\alpha)c_2(\alpha)
	=\frac{1}{4}((1-\alpha)^2-(\alpha^2-6\alpha+1))=\alpha
\end{align*}
and
\begin{align*}
	\lim_{\delta'\uparrow c_1(\alpha)}   ( 1+c_2(\alpha)+\delta')=1+c_1(\alpha)+c_2(\alpha)=2-\alpha.
\end{align*}
Hence,
\begin{align*}
	&\limsup_{N\to\infty}\e \overlineit{M}_{N,K}(\{\delta<|R_{1,2}|<1\})\\
	&\leq 2\lim_{N\to\infty}\e M_{N,K}
	-2\beta_c\min\Bigl(1-\sqrt{1-\frac{\alpha}{2}},1-\sqrt{\Bigl(1-\frac{\alpha}{2}\Bigr)h(\delta)}\Bigr)\\
	&=2\lim_{N\to\infty}\e M_{N,K}
	-2\beta_c\Bigl(1-\sqrt{\Bigl(1-\frac{\alpha}{2}\Bigr)h(\delta)}\Bigr),
\end{align*}
where the last equality holds since $h(\cdot)\geq 1$. Hence, 
\eqref{thm3:eq3} holds.

\section{Multiple-Peak Property}\label{sec:MPP}
In this section, we prove the multiple-peak phenomenon of the $\NK$ model.
Let $(H_{N,K}^1(\sigma))_{\sigma\in \Sigma_N}$ and $(H_{N,K}^2(\sigma))_{\sigma\in \Sigma_N}$ be i.i.d. copies of $(H_{N,K}(\sigma))_{\sigma\in \Sigma_N}$. For any $0\leq s\leq 1,$ set
\begin{align*}
	H_{N,K,s}^1(\sigma)&=\sqrt{s}H_{N,K}(\sigma)+\sqrt{1-s}H_{N,K}^1(\sigma),\\
	H_{N,K,s}^2(\sigma)&=\sqrt{s}H_{N,K}(\sigma)+\sqrt{1-s}H_{N,K}^2(\sigma).
\end{align*}
Denote by $Z_{N,K,s}^1(\beta)$ and $Z_{N,K,s}^2(\beta)$ the partition functions and by $\sigma^{s,1}$ and $\sigma^{s,2}$ the fittest genomes corresponding to $H_{N,K,s}^1$ and $H_{N,K,s}^2$, respectively. Define
\begin{align*}
	\phi(s)&=\e Q(\sigma^{s,1},\sigma^{s,2}).
\end{align*}

\begin{lemma}\label{holder}
	For any $0<s_1<s_0\leq 1,$ we have that
	$$\phi(s_0)\leq \phi(1)^{1-\frac{\ln s_0}{\ln s_1}}\phi(s_1)^{\frac{\ln s_0}{\ln s_1}}.$$
\end{lemma}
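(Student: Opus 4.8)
The plan is to show that $\phi$ extends to a power series in $s$ with nonnegative coefficients, and then to read off the stated inequality as a direct instance of Hölder's inequality (which is the source of the name). Observe first that the claim is equivalent to convexity of $x\mapsto\ln\phi(e^{x})$: writing $x_0=\ln s_0$, $x_1=\ln s_1$ and $\lambda=\ln s_0/\ln s_1\in(0,1)$, one has $x_0=(1-\lambda)\cdot 0+\lambda x_1$, and the asserted bound is exactly $\ln\phi(e^{x_0})\le(1-\lambda)\ln\phi(1)+\lambda\ln\phi(e^{x_1})$.

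First I would record the covariance structure of the coupled fields. Each $H^a_{N,K,s}$ ($a=1,2$) has the same law as $H_{N,K}$, so $\e H^a_{N,K,s}(\sigma)H^a_{N,K,s}(\tau)=NQ(\sigma,\tau)$, whereas the cross-covariance is $\e H^1_{N,K,s}(\sigma)H^2_{N,K,s}(\tau)=s\,NQ(\sigma,\tau)$ because $H^1_{N,K}\perp H^2_{N,K}$. Setting $\mathbf u=(H^1_{N,K,s}(\sigma))_{\sigma}$ and $\mathbf v=(H^2_{N,K,s}(\sigma))_{\sigma}$, the pair $(\mathbf u,\mathbf v)$ is a centered Gaussian vector in $\mathbb{R}^{\Sigma_N}$ whose marginals are both $N(0,C)$ with $C=(NQ(\sigma,\tau))_{\sigma,\tau}$ and whose cross-covariance is $sC$; this is precisely the time-$t$ coupling of the Ornstein--Uhlenbeck process stationary at $N(0,C)$ with $s=e^{-t}$, which is why log-convexity in $\ln s$ is natural. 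Next I would decompose $\phi$. For $\mathbf x\in\mathbb{R}^{\Sigma_N}$ let $\sigma^*(\mathbf x)=\argmax_\sigma x_\sigma$ (a.s. unique), let $A_i(\mathbf x)=(\sigma^*(\mathbf x)_i,\ldots,\sigma^*(\mathbf x)_{i+K})\in\Sigma_{K+1}$ be its $i$-th block, and set $f_{i,b}(\mathbf x)=I(A_i(\mathbf x)=b)$ for $b\in\Sigma_{K+1}$; these are bounded, hence lie in $L^2(N(0,C))$. Since $I(A_i(\mathbf u)=A_i(\mathbf v))=\sum_b f_{i,b}(\mathbf u)f_{i,b}(\mathbf v)$,
\[
\phi(s)=\frac1N\sum_{i=0}^{N-1}\sum_{b\in\Sigma_{K+1}}\e\bigl[f_{i,b}(\mathbf u)f_{i,b}(\mathbf v)\bigr].
\]

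The key input is then the Wiener-chaos (Mehler) expansion for this Gaussian coupling: for any $f\in L^2(N(0,C))$ one has $\e[f(\mathbf u)f(\mathbf v)]=\sum_{k\ge0}s^{k}\,\|\Pi_k f\|^2$, where $\Pi_k$ denotes orthogonal projection onto the $k$-th Hermite chaos of $N(0,C)$ (if $C$ is degenerate, one first restricts to its range). Summing over $i,b$ gives $\phi(s)=\sum_{k\ge0}C_k s^{k}$ with $C_k:=\frac1N\sum_{i,b}\|\Pi_k f_{i,b}\|^2\ge0$, a convergent power series with nonnegative coefficients; moreover $C_0=\frac1N\sum_{i,b}\p(A_i(\mathbf u)=b)^2>0$, so $\phi>0$ and all logarithms are well defined.

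Finally, the Hölder step is immediate from the representation $\phi(s)=\sum_k C_k s^{k}$. With $\lambda=\ln s_0/\ln s_1\in(0,1)$ we have $s_0^{k}=1^{k(1-\lambda)}s_1^{k\lambda}$, so $C_k s_0^{k}=(C_k)^{1-\lambda}(C_k s_1^{k})^{\lambda}$, and Hölder's inequality with exponents $1/(1-\lambda)$ and $1/\lambda$ yields
\[
\phi(s_0)=\sum_{k\ge0}(C_k)^{1-\lambda}\bigl(C_k s_1^{k}\bigr)^{\lambda}
\le\Bigl(\sum_{k\ge0}C_k\Bigr)^{1-\lambda}\Bigl(\sum_{k\ge0}C_k s_1^{k}\Bigr)^{\lambda}
=\phi(1)^{1-\lambda}\phi(s_1)^{\lambda},
\]
which is exactly the claimed bound. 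The only substantive obstacle is establishing the nonnegative-coefficient expansion in the middle step, i.e.\ correctly setting up the Mehler/Ornstein--Uhlenbeck representation of $\e[f(\mathbf u)f(\mathbf v)]$ for the (possibly degenerate) covariance $C=NQ$ and verifying that $f_{i,b}$ is a legitimate $L^2$ function despite being defined through the argmax; once this is in place, the remainder is a one-line application of Hölder's inequality.
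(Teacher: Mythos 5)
Your proof is correct and rests on the same mechanism as the paper's: the paper invokes Lemma 10.3 of Chatterjee's \emph{Superconcentration and Related Topics}, whose content is exactly the nonnegative-coefficient expansion $\e[f(\mathbf u)f(\mathbf v)]=\sum_k s^k\|\Pi_k f\|^2$ for the Ornstein--Uhlenbeck coupling followed by H\"older, which you reprove from scratch. The only structural difference is that the paper applies this to the finite-temperature quantity $\phi_\beta(s)=\e\langle Q(\sigma^1,\sigma^2)\rangle_{\beta,s}$ and then sends $\beta\to\infty$, whereas you work directly with the (a.s.\ unique) argmax, which is legitimate since the indicators $f_{i,b}$ are bounded and hence in $L^2$.
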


\begin{proof}
	Denote by $\la \cdot\ra_{\beta,s}$ the Gibbs expectation with respect to $(\sigma^1,\sigma^2)$ sampled from the Gibbs measure associated to the partition function, $Z_{N,K,s}^1(\beta)\times Z_{N,K,s}^2(\beta).$ Define $
	\phi_\beta(s)=\e \la Q(\sigma^1,\sigma^2)\ra_{\beta,s}$
	for $s\geq 0.$  By applying \cite[Lemma 10.3]{chatterjee2014superconcentration} to $v\geq 0\mapsto \phi_\beta(e^{-v})$, we have that for any $0\leq v_0\leq v_1<\infty,$
	\begin{align*}
		\phi_\beta(e^{-v_0})&\leq \phi_\beta(1)^{1-\frac{v_0}{v_1}}\phi_\beta(e^{-v_0})^{\frac{v_0}{v_1}}.
	\end{align*}
	From this inequality, taking $v_0=-\ln s_0 $ and $v_1=-\ln {s_1}$ and then sending $\beta\to\infty$ complete our proof. 
\end{proof}

Now we turn to the proof of Theorem \ref{thm:peaks}. Fix an arbitrary $0<\varepsilon<1$. 
Fix $0<s_0<1$ such that
\begin{align}\label{add:thm4:proof:eq6}
	\Bigl(\frac{1}{\sqrt{s_0}}-1+\sqrt{\frac{1-s_0}{s_0}}\Bigr)(\sqrt{2\ln 2}+1)<\frac{\varepsilon}{4}.
\end{align}
Note that from Lemmas \ref{lem3} and \ref{main:proof:lem2}, there exists a positive constant $C$ such that
\begin{align}\label{thm4:proof:eq2}
	\p\bigl(M_{N,K}\leq (\sqrt{2\ln 2}+1),|M_{N,K}-\e M_{N,K}|\leq 4^{-1}\varepsilon\bigr)\geq 1-Ce^{-N/C}.
\end{align}
Next, for any $S\subset [0,1]$ and $s\in [0,1]$, define
\begin{align*}
	\overlineit{M}_{N,K,s}(S)&=\max_{(\sigma^1,\sigma^2)\in S}\Bigl(\frac{H_{N,K}^1(\sigma^1)}{N}+\frac{H_{N,K}^2(\sigma^2)}{N}\Bigr).
\end{align*}
Note that from \eqref{thm3.1:proof:eq5}
and
$\e (H_{N,K,s}^1(\sigma^1)+H_{N,K,s}^2(\sigma^2))^2=2N(1+ts),
$ 
Lemma \ref{main:proof:lem2} implies that for any $t=i/N$ with $1\leq i\leq N-K,$
\begin{align}\label{thm4:proof:eq4}
	\e \overlineit{M}_{N,K,s}(\{Q_{1,2}=t\})\leq 2\sqrt{(1+ts)(2-\alpha-t+\epsilon_N)\ln 2},
\end{align}
where $\epsilon_N=(2+\ln N)/N.$
To control the right-hand side, notice that there exist some $0<s_1<s_0$ small enough so that as long as $N$ is large enough, for any $t\in (0,1-\alpha]$,
\begin{align*}
	2\sqrt{(1+ts_1)(2-\alpha-t+\varepsilon_N)\ln 2}\leq 2\sqrt{(1+s_1)(2-\alpha/2)\ln 2}<2\sqrt{2\ln 2}.
\end{align*}
Consequently, similar to the proofs of \eqref{eqn:Q-high-epi-lowtemp-1} and \eqref{eqn:Q-high-epi-lowtemp-2}, 
\begin{align}\label{thm4:proof:eq5}
	\limsup_{N\to\infty}\e\overlineit{M}_{N,K,s_1}(\{0<Q_{1,2}<1\})<2\lim_{N\to\infty}\e M_{N,K}.
\end{align}
Also, from Lemma \ref{main:proof:lem2},
\begin{align*}
	\limsup_{N\to\infty} \e \overlineit{M}_{N,K,s_1}(\{Q_{1,2}=1\})\leq 2\sqrt{(1+s_1)\ln 2}< 2\lim_{N\to\infty}\e M_{N,K}.
\end{align*}
Putting these two together yields
\begin{align}\label{thm4:proof:eq1}
	\limsup_{N\to\infty}\e \overlineit{M}_{N,K,s_1}(\{Q_{1,2}>0\})<2\lim_{N\to\infty}\e M_{N,K}.
\end{align}
Since $\overlineit{M}_{N,K,s_1}(\{Q_{1,2}>0\})$ and $M_{N,K}$ are concentrated and $M_{N,K},M_{N,K,s_1}^1,M_{N,K,s_1}^2$ are identically distributed, there exist some $\eta,C'>0$ such that with probability at least $1-C'e^{-N/C'},$
\begin{align*}
	\overlineit{M}_{N,K,s_1}(\{Q_{1,2}>0\})\leq M_{N,K,s_1}^1+M_{N,K,s_1}^2-\eta.
\end{align*}
Therefore, with probability at least $1-C'e^{-N/C'},$ $Q(\sigma^{s_1,1},\sigma^{s_1,2})=0,$ where $\sigma^{s_1,1}$ and $\sigma^{s_1,2}$ are the fittest genomes of $H_{N,K,s_1}^1$ and $H_{N,K,s_1}^2$ respectively. 
Consequently, from Lemma \ref{holder},
\begin{align*}
	\e Q(\sigma^{s_0,1},\sigma^{s_0,2})&\leq (C'e^{-N/C'})^{\frac{\ln s_0}{\ln s_1}}.
\end{align*}
It follows by the Markov inequality,
\begin{align}\label{add:thm4:proof:eq2}
	\p(Q(\sigma^{s_0,1},\sigma^{s_0,2})>0)&=\p(Q(\sigma^{s_0,1},\sigma^{s_0,2})\geq N^{-1})\leq N(C'e^{-N/C'})^{\frac{\ln s_1}{\ln s_0}}.
\end{align}
Now, similar to \eqref{add:thm4:proof:eq1}, 
\begin{align*}
	\max_{|r|\geq \varepsilon}\e \overlineit{M}_{N,K,s_0}(\{Q_{1,2}=0,R_{1,2}=r\})&\leq 2\sqrt{h(\varepsilon)\ln 2}<2\sqrt{2\ln 2}.
\end{align*}
Hence, since $\overlineit{M}_{N,K,s_0}(\{Q_{1,2}=0,R_{1,2}=r\})$ is concentrated, there exist positive constants $\eta'$ and $C''$ so that with probability at least $1-C''e^{-N/C''},$
\begin{align*}
	\overlineit{M}_{N,K,s_0}(\{Q_{1,2}=0,|R_{1,2}|\geq \varepsilon\})<M_{N,K,s_0}^1+M_{N,K,s_0}^2-\eta'.
\end{align*}
In particular, this and \eqref{add:thm4:proof:eq2} readily imply that
\begin{align}\label{thm4:proof:eq7}
	\p(Q(\sigma^{s_0,1},\sigma^{s_0,2})=0,|R(\sigma^{s_0,1},\sigma^{s_0,2})|< \varepsilon)\geq 1-\Delta_N,
\end{align}
where $$
\Delta_N:=N(C'e^{-N/C'})^{\frac{\ln s_1}{\ln s_0}}+C''e^{-N/C''}.$$

Finally, denote by $(\sigma^{s_0,\ell})_{\ell\geq 1}$ a sequence of fittest genomes associated to $H_{N,K,s_0}^\ell$, defined in a similar manner as $H_{N,K,s_0}^1$ and $H_{N,K,s_0}^2.$ Let $S_N(\varepsilon)$ be the collection of all $\sigma^{s_0,\ell}$ for
$$1\leq \ell\leq \min\bigl\{\Delta_N^{-1/4},(4Ce^{N/C})^{1/2}\bigr\}=:L_N.$$
From  \eqref{thm4:proof:eq2} and \eqref{thm4:proof:eq7}, with probability at least $$1-\Delta_N|S_N(\varepsilon)|^2-4Ce^{-N/C}|S_N(\varepsilon)|\geq 1-\sqrt{\Delta_N}-\sqrt{4Ce^{N/C}},$$ we have that for any two distinct $\ell,\ell'\in S_N(\varepsilon),$
\begin{align*}
	Q(\sigma^{s_0,\ell},\sigma^{s_0,\ell'})=0\,\,\mbox{and}\,\,|R(\sigma^{s_0,\ell},\sigma^{s_0,\ell'})|< \varepsilon.
\end{align*}
and that  for any $\ell\in S_N(\varepsilon),$
\begin{align*}
	\frac{H_{N,K}({\sigma^{s_0,\ell}})}{N}&= \frac{H_{N,K,s_0}^\ell(\sigma^{s_0,\ell})}{N}+\Bigl(\frac{1}{\sqrt{s_0}}-1\Bigr)\frac{H_{N,K,s_0}^\ell(\sigma^{s_0,\ell})}{N}-\frac{\sqrt{1-s_0}}{\sqrt{s_0}}\frac{H_{N,K}^\ell(\sigma^{s_0,\ell})}{N}\\
	&\geq M_{N,K,s_0}^\ell-\Bigl(\frac{1}{\sqrt{s_0}}-1+\frac{\sqrt{1-s_0}}{\sqrt{s_0}}\Bigr)(\sqrt{2\ln 2}+1)\\
	&\geq M_{N,K}-\varepsilon,
\end{align*}
where \eqref{add:thm4:proof:eq6} was used in the last inequality. Noting that $L_N$ is of exponential order completes our proof.

\section{Existence of Near-Fittest Paths}\label{sec:NFP}
This final section is dedicated to establishing the existence of the near-fittest paths, i.e., Theorem~\ref{thm:near-fittest-path}.
First of all, note that although we focus on the $\NK$ model where $N$ and $K$ satisfy the relation $K=\lfloor \alpha(N-1)\rfloor$, it can still be defined in the same way as before for any given integers $N,K\geq 1$ without this relation. In particular, whenever $K\geq N-1$, this model is the same as the REM. The following lemma establishes the monotonicity of $\e M_{N,K}$ in $K$. 

\begin{lemma}\label{monotonicity}
	For any $N\geq 1$ and $1\leq K_1\leq K_2$ we have that $
    \e M_{N,K_1}\leq \e M_{N,K_2}.$
   
\end{lemma}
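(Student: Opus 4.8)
The plan is to recognize the claim as a direct consequence of a Slepian-type Gaussian comparison, since $H_{N,K_1}$ and $H_{N,K_2}$ are centered Gaussian processes on the same finite index set $\Sigma_N$ and differ only through their covariances. Write $Q_K(\sigma,\tau)$ for the epistatic overlap when the neighborhood size is $K$, so that $\e H_{N,K}(\sigma)H_{N,K}(\tau)=NQ_K(\sigma,\tau)$. The first observation is that the variance is insensitive to $K$: since $Q_K(\sigma,\sigma)=1$ for every $\sigma$, we have $\e H_{N,K}(\sigma)^2=N$ for all $K$, so the two processes share a common diagonal and only the off-diagonal covariance moves with $K$.

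The key monotonicity is that $Q_K$ is nonincreasing in $K$. Indeed, for fixed $\sigma,\tau$ and a fixed starting locus $i$, the event $\{\sigma_j=\tau_j \text{ for all } i\le j\le i+K\}$ only shrinks as $K$ grows, so each indicator in the definition of $NQ_K(\sigma,\tau)$ is nonincreasing in $K$; summing over $i$ gives $Q_{K_2}(\sigma,\tau)\le Q_{K_1}(\sigma,\tau)$ whenever $K_1\le K_2$. Consequently the increment variances obey
$$\e\bigl(H_{N,K_1}(\sigma)-H_{N,K_1}(\tau)\bigr)^2=2N\bigl(1-Q_{K_1}(\sigma,\tau)\bigr)\le 2N\bigl(1-Q_{K_2}(\sigma,\tau)\bigr)=\e\bigl(H_{N,K_2}(\sigma)-H_{N,K_2}(\tau)\bigr)^2$$
for all $\sigma,\tau\in\Sigma_N$. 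Applying the Sudakov--Fernique inequality to the finite families $(H_{N,K_1}(\sigma))_{\sigma}$ and $(H_{N,K_2}(\sigma))_{\sigma}$ then yields $\e\max_{\sigma}H_{N,K_1}(\sigma)\le\e\max_{\sigma}H_{N,K_2}(\sigma)$, and dividing by $N$ gives $\e M_{N,K_1}\le\e M_{N,K_2}$.

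There is essentially no analytic obstacle here; the only point requiring a moment's care is the degenerate regime $K+1>N$, where the periodic window $\{i,\ldots,i+K\}$ wraps around to cover all $N$ loci and the indicator reduces to $I(\sigma=\tau)$, so that $Q_K(\sigma,\tau)=I(\sigma=\tau)$ recovers the REM covariance. The monotonicity $Q_{K_2}\le Q_{K_1}$ continues to hold across this transition, so the increment-variance comparison, and hence the conclusion, is unaffected.
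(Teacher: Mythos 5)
Your proof is correct and is essentially the paper's argument: the paper likewise observes that the variances agree while the off-diagonal covariances are nonincreasing in $K$, and concludes by Slepian's lemma, which in this equal-variance setting is interchangeable with your Sudakov--Fernique formulation via increment variances. Your additional remarks (the explicit verification that each indicator in $NQ_K$ shrinks with $K$, and the wrap-around case $K+1>N$) are fine but add nothing beyond what the paper treats as obvious.
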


\begin{proof}
	Observe that obviously, for any $\sigma^1, \sigma^2 \in \Sigma_N$,
	\begin{align*}
		\e H_{N,K_1}(\sigma^1)H_{N,K_1}(\sigma^2)\geq \e H_{N,K_2}(\sigma^1)H_{N,K_2}(\sigma^2)
	\end{align*}
	and the two sides equal each other when $\sigma^1=\sigma^2$. By Slepian's lemma, our assertion holds.
	
\end{proof}

Recall the disorders $X_i(\sigma_i,\ldots,\sigma_{i+K})$ in $H_{N,K}.$ For any $N_1,N_2\geq 1$ with $N_1+N_2=N,$ write
\begin{align*}
	%\begin{split}\label{bridge:eq2}
	\tau&=(\tau_0,\ldots,\tau_{N_1-1})\,\,\mbox{and}\,\,\rho=(\rho_0,\ldots,\rho_{N_2-1}).
	%\end{split}
\end{align*}
Set 
\begin{align}
	\begin{split}\label{bridge:eq11}
		V_{N_1,K}^1(\tau)&=\sum_{i=0}^{N_1-K-1}X_i(\tau_i,\ldots,\tau_{i+K})\,\,\mbox{if $N_1\geq K+1$ and $V_{N_1,K}^1(\tau)=0$ if $N_1< K+1$},\\
		V_{N_2,K}^2(\rho)&=\sum_{i=0}^{N_2-K-1}X_{N_1+i}(\rho_i,\ldots,\rho_{i+K})\,\,\mbox{if $N_2\geq K+1$ and $V_{N_2,K}^2(\rho)=0$ if $N_2< K+1$}.
	\end{split}
\end{align}
Note that these functions are not the Hamiltonians for the $\NK$ models under our setting since their spins do not have the cyclic structures. From now on, we let $K=\lfloor \alpha(N-1)\rfloor.$

\begin{lemma}\label{bridge:lem1}
	Let $\eta,\alpha\in (0,1)$ satisfy \eqref{bridge:eq14}.
	There exists some $N_0\in \mathbb{N}$ depending on $\alpha$ and $\eta$ such that for any  $c\in (0,1)$ and $N,N_1,N_2\in \mathbb{N}$ satisfying 
	\begin{align}\label{bridge:condition}
		N_1+N_2=N\geq N_0\,\,\mbox{and}\,\,        \min\Bigl(\frac{N_1}{N},\frac{N_2}{N}\Bigr)\geq c,
	\end{align}
	we have that with probability at least $1-\omega e^{-\eta^2c^2N/\omega},$ whenever $\sigma_*\in \Sigma_N$ satisfies
	\begin{align}\label{bridge:ass1}
		\frac{H_{N,K}(\sigma^*)}{N}\geq M_{N,K}-\eta,
	\end{align}
	the following inequality holds
	\begin{align}\label{bridge:lem1:eq1}
		\min\Bigl(\frac{V_{N_1,K}^1(\tau^*)}{N_1},\frac{V_{N_2,K}^2(\rho^*)}{N_2}\Bigr)\geq M_{N,K}-\frac{4\eta}{c},
	\end{align}
	where $\tau^*:=(\sigma_0^*,\ldots,\sigma_{N_1-1}^*)$ and $\rho^*:=(\sigma_{N_1}^*,\ldots,\sigma_{N-1}^*).$ Here, $\omega>0$ is an absolute constant independent of all other variables.
\end{lemma}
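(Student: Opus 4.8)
The plan is to exploit the nearly additive structure of the cyclic Hamiltonian under the split $\sigma=(\tau,\rho)$. Reading indices modulo $N$, every summand $X_i(\sigma_i,\ldots,\sigma_{i+K})$ of $H_{N,K}$ with $0\le i\le N_1-K-1$ depends only on coordinates of $\tau$, and every summand with $N_1\le i\le N-K-1$ depends only on coordinates of $\rho$. I would therefore first record the exact identity
\[
H_{N,K}(\sigma)=V^1_{N_1,K}(\tau)+V^2_{N_2,K}(\rho)+B(\sigma),
\]
where
\[
B(\sigma):=\sum_{i=N_1-K}^{N_1-1}X_i(\sigma_i,\ldots,\sigma_{i+K})+\sum_{i=N-K}^{N-1}X_i(\sigma_i,\ldots,\sigma_{i+K})
\]
collects the $2K$ boundary terms: the $K$ terms straddling the interface between $\tau$ and $\rho$, and the $K$ terms wrapping around the ring.

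Next I would establish three upper bounds, all uniform over configurations and hence applicable to the random near-optimal $\sigma^*$. First, $B(\sigma)$ depends on only the $\le 4K$ spins lying in the two boundary windows, so $\max_{\sigma}B(\sigma)$ is a maximum of at most $2^{4K}$ centered Gaussians of variance $2K$; Lemma \ref{main:proof:lem2} then gives $\e\max_{\sigma}B(\sigma)\le 4K\sqrt{\ln 2}$, with the fluctuation controlled by Gaussian concentration as in Lemma \ref{lem3}. Second, $V^2_{N_2,K}$ is a Gaussian process over $2^{N_2}$ configurations of variance $N_2-K\le N_2$, so $\e\max_{\rho}V^2_{N_2,K}(\rho)\le N_2\beta_c$, again with an exponential tail. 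Third, Theorem \ref{thm:Fnk-Mnk} together with Lemma \ref{lem3} gives $M_{N,K}\ge\beta_c-\varepsilon$ for $N\ge N_0$. I would intersect these three events; with slack parameters of order $\eta$, each fails with probability at most $\omega e^{-cN\eta^2/\omega}$.

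On this intersection, the decomposition and hypothesis \eqref{bridge:ass1} yield
\[
V^1_{N_1,K}(\tau^*)=H_{N,K}(\sigma^*)-V^2_{N_2,K}(\rho^*)-B(\sigma^*)\ge N(M_{N,K}-\eta)-\max_{\rho}V^2_{N_2,K}(\rho)-\max_{\sigma}B(\sigma).
\]
Bounding $\max_{\rho}V^2_{N_2,K}(\rho)\le N_2\beta_c\le N_2(M_{N,K}+\varepsilon)$ causes the term $N_2M_{N,K}$ to cancel against part of $NM_{N,K}$, and dividing by $N_1\ge cN$ gives
\[
\frac{V^1_{N_1,K}(\tau^*)}{N_1}\ge M_{N,K}-\frac{\eta}{c}-\frac{4K\sqrt{\ln 2}}{cN}-\frac{E_N}{c},
\]
where $E_N$ gathers the concentration slacks and the gap $\beta_c-M_{N,K}$ and can be made $<2\eta$ for $N\ge N_0$. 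Crucially, \eqref{bridge:eq14} forces $4K\sqrt{\ln 2}/(cN)\le 4\alpha\sqrt{\ln 2}/c<\eta/c$, so the three error terms sum to less than $4\eta/c$, giving $V^1_{N_1,K}(\tau^*)/N_1\ge M_{N,K}-4\eta/c$. The estimate for $V^2_{N_2,K}(\rho^*)/N_2$ is identical by the symmetry of the split, which proves \eqref{bridge:lem1:eq1}; and since $c^2\le c$, the obtained rate $e^{-cN\eta^2/\omega}$ is at most the stated $e^{-\eta^2c^2N/\omega}$.

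The main obstacle is the uniform control of $B$: the quantity $B(\sigma^*)$ enters for the specific near-fittest $\sigma^*$, which is random and adapted to the disorder, so a crude union bound over all of $\Sigma_N$ would be ruinous. The resolution is the observation that $B$ depends on only $O(K)=O(\alpha N)$ coordinates, so $\max_{\sigma}B$ is a maximum over merely $2^{O(K)}$ effective Gaussians and is of order $\alpha N$ rather than $N$; it is exactly the smallness of $\alpha$ imposed by \eqref{bridge:eq14} that renders this boundary contribution negligible against the slack $\eta/c$. The only remaining work is tracking the numerical constants through the three bounds to land on the factor $4$. I would also note that in the relevant regime $c>\alpha$ one has $N_1,N_2\ge cN>K+1$ for $N\ge N_0$, so the non-degenerate forms of $V^1_{N_1,K}$ and $V^2_{N_2,K}$ apply.
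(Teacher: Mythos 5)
Your argument for the main case is sound and follows the same skeleton as the paper's proof: split $H_{N,K}$ into the two bulk pieces $V^1,V^2$ plus a boundary contribution supported on $O(K)$ coordinates, bound the boundary term by a maximum of $2^{O(K)}$ Gaussians of variance $O(K)$ (hence $O(\alpha N)$, negligible under \eqref{bridge:eq14}), and use concentration plus $\lim\e M_{N,K}=\beta_c$ to close the error budget. Where you genuinely diverge is that you bound $\max_\rho V^2_{N_2,K}(\rho)\le N_2\beta_c$ directly from Lemma \ref{main:proof:lem2}, whereas the paper completes each $V^\ell$ to an honest cyclic $\NK$ Hamiltonian $H^\ell_{N_\ell,K}$ by adjoining independent boundary disorder, and then controls $M^\ell_{N_\ell,K}$ via the monotonicity-in-$K$ lemma (Slepian) and Theorem \ref{thm:Fnk-Mnk}. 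Your route is shorter and avoids the auxiliary Hamiltonians entirely; since only an \emph{upper} bound on $\max_\rho V^2$ is needed, the crude entropy bound suffices and the constants still close with room to spare against the factor $4\eta/c$.

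There is, however, a genuine gap: you have silently assumed $N_1\ge K+1$ and $N_2\ge K+1$. The lemma allows any $c\in(0,1)$ and any split with $\min(N_1,N_2)\ge cN$, so the case $N_1\le K$ (or $N_2\le K$) can occur — and it actually does occur in the application to Theorem \ref{thm:near-fittest-path}, where $N_1$ can be as small as $\lfloor N/(n+1)\rfloor$ while $K\approx\alpha N$, and nothing forces $\alpha<1/(n+1)$. In that regime your ``exact identity'' $H_{N,K}=V^1+V^2+B$ is false: the two boundary windows $[N_1-K,N_1-1]$ and $[N-K,N-1]$ overlap modulo $N$, and $V^1_{N_1,K}$ is \emph{defined} to be $0$. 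Your closing remark restricts attention to ``the relevant regime $c>\alpha$,'' but that is not a hypothesis of the lemma. The fix is the paper's Case II and is elementary: if $N_1<K+1$ then $c\le N_1/N\le \alpha+1/N$, so by \eqref{bridge:eq14} one has $4\eta/c\gtrsim 4\eta/\alpha>20\sqrt{2\ln 2}$, hence $M_{N,K}-4\eta/c$ is (with high probability) below $-17\sqrt{2\ln 2}$, while $V^1/N_1=0$ and $V^2/N_2\ge -3\sqrt{2\ln 2}$ with high probability, so \eqref{bridge:lem1:eq1} holds trivially. You should add this case to make the proof complete.
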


\begin{proof}
We divide our proof into two cases.

{\noindent \bf Case I: $N_1\geq K+1$ and $N_2\geq K+1$.}
    For any $\sigma\in \Sigma_N,$ write
	\begin{align*}
		H_{N,K}(\sigma)&=V_{N_1,K}^1(\tau)+E_N^1(\sigma_{N_1-K},\ldots,\sigma_{N_1+K-1})\\
		&+V_{N_2,K}^2(\rho)+E_{N}^2(\sigma_{N-K},\ldots,\sigma_{N+K-1}),
	\end{align*}
	where we set
	\begin{align}
		\begin{split}\label{bridge:eq2}
			\tau&=(\tau_0,\ldots,\tau_{N_1-1})=(\sigma_0,\ldots,\sigma_{N_1-1}),\\    
			\rho&=(\rho_0,\ldots,\rho_{N_2-1})=(\sigma_{N_1},\ldots,\sigma_{N-1}),
		\end{split}
	\end{align}
	and
	\begin{align*}
		E_N^1(\sigma_{N_1-K},\ldots,\sigma_{N_1+K-1})&=\sum_{i=N_1-K}^{N_1-1}X_{i}(\sigma_i,\ldots,\sigma_{i+K}),\\
		E_{N}^2(\sigma_{N-K},\ldots,\sigma_{N+K-1})&=\sum_{i=N-K}^{N-1}X_{i}(\sigma_i,\ldots,\sigma_{i+K}).
	\end{align*}
	Let ${H}_{N_1,K}^1$ and ${H}_{N_2,K}^2$ be the Hamiltonians for the $\NK$ models associated to the parameter pairs $(N_1,K)$ and $(N_2,K)$, respectively.  Then for any $\tau\in\Sigma_{N_1}$ and $\rho\in \Sigma_{N_2}$, we can write them as
	\begin{align*}
		{H}_{N_1,K}^1(\tau)&=V_{N_1,K}^1(\tau)+\tilde{E}_N^1(\tau_{N_1-K},\ldots,\tau_{N_1+K-1}),\\
		{H}_{N_2,K}^2(\rho)&=V_{N_2,K}^2(\rho)+\tilde{E}_N^2(\rho_{N_2-K},\ldots,\rho_{N_2+K-1}),
	\end{align*}
	where
	\begin{align*}
		\tilde{E}_N^1(\tau_{N_1-K},\ldots,\tau_{N_1+K-1})&=\sum_{i=N_1-K}^{N_\ell-1}\tilde{X}_i^1(\tau_i,\ldots,\tau_{i+K}), \\
		\tilde{E}_N^2(\rho_{N_2-K},\ldots,\rho_{N_2+K-1})&=\sum_{i=N_2-K}^{N_2-1}\tilde{X}_i^2(\rho_i,\ldots,\rho_{i+K})
	\end{align*}
	for i.i.d. standard normal random variables $\tilde{X}_i^\ell$, $\ell=1,2$, which are independent of all other randomness. Let
	\begin{align*}
		\Lambda_N&=\max_{\sigma}\Bigl|\frac{H_{N,K}(\sigma)}{N}-\frac{{V}_{N_1,K}^1(\tau)}{N}-\frac{{V}_{N_2,K}^2(\rho)}{N}\Bigr|,\\
		\Lambda_N^1&=\max_{\tau}\Bigl|\frac{{V}_{N_1,K}^1(\tau)}{N_1}-\frac{{H}_{N_1,K}^1(\tau)}{N_1}\Bigr|,\\
        \Lambda_N^2&=\max_{\rho}\Bigl|\frac{{V}_{N_2,K}^2(\rho)}{N_2}-\frac{{H}_{N_2,K}^2(\rho)}{N_2}\Bigr|.
	\end{align*}
	Using Lemma \ref{main:proof:lem2}, 
	\begin{align*}
		\e \Lambda_N
		&\leq \frac{1}{N}\e\max_{x\in \Sigma_{2K}}|E_N^1(x_0,\ldots,x_{2K-1})|\\
		&+ \frac{1}{N}\e\max_{x\in \Sigma_{2K}}|E_N^2(x_0,\ldots,x_{2K-1})|\leq \frac{2K\sqrt{2\ln 2}}{N}
	\end{align*}
	and
	\begin{align*}
		\e\Lambda_N^\ell&= 
		\frac{1}{N_\ell}\e\max_{x\in \Sigma_{2K}}|\tilde {E}_N^\ell(x_0,\ldots,x_{2K-1})|\leq\frac{K\sqrt{2\ln 2}}{N_\ell},\,\,\ell=1,2.
	\end{align*}
	Note that similar to Lemma \ref{lem3}, the extremal processes here are all concentrated with a Gaussian-tailed bound:
	\begin{align*}
		\p(|\Lambda_N-\e \Lambda_N|\geq t)&\leq 2e^{-\frac{N^2t^2}{8K}},\\
		\p(|\Lambda_N^\ell-\e\Lambda_N^\ell|\geq t)&\leq 2e^{-\frac{N_\ell^2t^2}{4K}},\,\,\ell=1,2.
	\end{align*}
	It follows that with probability at least 
	\begin{align}\label{bridge:eq8}
		1-2e^{-N^2t^2/8K}-2e^{-N_1^2t^2/4K}-2e^{-N_2^2t^2/4K},
	\end{align}
	the following inequalities hold simultaneously, 
	\begin{align}
		\label{bridge:eq3}   \Lambda_N&\leq t+\frac{2K\sqrt{2\ln 2}}{N},\\
		\label{bridge:eq4} \Lambda_N^\ell&\leq t+\frac{K\sqrt{2\ln 2}} {N_\ell},\,\,\ell=1,2.
	\end{align}
	Next, note that for $\ell=1,2$,
	$0<\alpha\leq \alpha N/N_\ell$. 
	Since the expected maximal fitness in the $\NK$ model is nondecreasing in the parameter $K$, by Lemma \ref{monotonicity}, 
	\begin{align*}
		\e {M}_{N_\ell,K_\ell}^\ell\leq \e {M}_{N_\ell,K}^\ell\leq \sqrt{2\ln 2},
	\end{align*}
	where $K_\ell:=\lfloor \alpha (N_\ell-1)\rfloor$, and $M_{N_\ell,K_\ell}^\ell$ and $M_{N_\ell,K}^\ell$ are the optimal fitnesses of the $\NK$ models associated with the parameter pairs $(N_\ell,K_\ell)$ and $(N_\ell,K)$, respectively. Since $\e M_{N_\ell,K_\ell}^\ell\to \sqrt{2\ln 2}$ by Theorem \ref{thm:Fnk-Mnk},
	there exists some $n_0\geq 1$ depending on $\eta$ and $\alpha$ such that whenever $N\geq n_0$ and $N_1,N_2$ satisfy the second inequality in \eqref{bridge:condition}, we have
	\begin{align}\label{bridge:eq6}
		\sqrt{2\ln 2}-\eta\leq \e M_{N_\ell,K}^\ell\leq\sqrt{2\ln 2},
	\end{align}
	which combining with Lemma \ref{lem3} implies that
	with probability at least 
	\begin{align}
		\label{bridge:eq9}
		1-2e^{-Nt^2/4}-2e^{-N_1t^2/4}-2e^{-N_2t^2/4},
	\end{align}
	the following inequality holds
	\begin{align}\label{bridge:eq7}
		\sqrt{2\ln 2}-\eta-t\leq M_{N,K},{M}_{N_1,K}^1,{M}_{N_2,K}^2\leq\sqrt{2\ln 2}+t.
	\end{align}

	To establish our proof, let $E_N$ be the event on which \eqref{bridge:eq3}, \eqref{bridge:eq4}, and \eqref{bridge:eq7} hold simultaneously. Assume that on the event $E_N$, the inequality \eqref{bridge:lem1:eq1} is violated, say,
	\begin{align*}
		\frac{V_{N_1,K}^1(\tau_*)}{N_1}\leq M_{N,K}-\frac{4\eta}{c},
	\end{align*}
	then from \eqref{bridge:eq4} and \eqref{bridge:eq7},
	\begin{align*}
		\frac{{H}_{N_1,K}^1(\tau_*)}{N_1}&\leq \sqrt{2\ln 2}-\frac{4\eta}{c}+2t+\frac{K\sqrt{2\ln 2}}{N_1}.
	\end{align*}
	On the other hand, from \eqref{bridge:ass1} and \eqref{bridge:eq7}, we have
	\begin{align*}
		\frac{H_{N,K}(\sigma_*)}{N}&\geq \sqrt{2\ln 2}-2\eta-t,  \quad 
		\frac{H_{N_2,K}^2(\rho_*)}{N_2}\leq \sqrt{2\ln 2}+t.
	\end{align*}
	These together with \eqref{bridge:eq3} and \eqref{bridge:eq4} imply that
	\begin{align*}
		\sqrt{2\ln 2} -2\eta-t&\leq \frac{H_{N,K}(\sigma_*)}{N}\\
		&\leq \frac{N_1}{N}\frac{{H}_{N_1,K}^1(\tau_*)}{N_1}+\frac{N_2}{N}\frac{{H}_{N_2,K}^2(\rho_*)}{N_2}+3t+\frac{4K\sqrt{2\ln 2}}{N}\\
		&\leq \frac{N_1}{N}\Bigl(\sqrt{2\ln 2}-\frac{4\eta
		}{c}+2t+\frac{K\sqrt{2\ln 2}}{N_1}\Bigr)\\
		&\qquad +\frac{N_2}{N}\bigl(\sqrt{2\ln 2}+t\bigr)+3t+\frac{4K\sqrt{2\ln 2}}{N}\\
		&\leq \sqrt{2\ln 2}-4\eta+6t+\frac{5K\sqrt{2\ln 2}}{N}\\
		&\leq \sqrt{2\ln 2}-4\eta +6t+5\sqrt{2\ln 2}\alpha,
	\end{align*}
	where the 4th inequality  used the condition \eqref{bridge:condition}.
	Due to \eqref{bridge:eq14}, if we take $t=\eta/7$, this inequality can not  hold.
	Our proof is then completed by noting that from \eqref{bridge:eq8} and \eqref{bridge:eq9}, the probability of $E_N$ is at least $
	1-\omega e^{-Nc^2\eta^2/\omega}$ for some absolute constant $\omega>0$.
    
\medskip

    {\noindent \bf Case II: Either $N_1< K+1$ or $N_2< K+1$.} Assume that $N_1<K+1$, then $V_{N_1,K}^1(\tau^*)=0$ and $c$ is at most $N_1/N\le\alpha$. Due to Lemma \ref{lem3} and Theorem \ref{thm:Fnk-Mnk}, setting $t=\sqrt{2\ln 2}$, as long as $N$ is large enough, we have with probability at least $1-2e^{-Nt^2/4}$, 
    $$M_{N,K}-\frac{4\eta}{c}\leq \sqrt{2\ln 2}+2t-\frac{4\eta}{\alpha}<3\sqrt{2\ln 2}-4 \cdot 5\sqrt{2\ln 2} %\cdot \frac{2\sqrt{2\ln 2}\eta}{1+N^{-1}\eta^{-1}2\sqrt{2\ln 2}}
    =-17\sqrt{2\ln 2}.$$
    On the other hand, since $N_2=N-N_1>K$, from \eqref{bridge:eq4} with $\ell=2$ and \eqref{bridge:eq7}, letting $t=\sqrt{2\ln 2}$, as long as $N$ is large enough, with probability at least $1-2e^{-N_2t^2/4}-2e^{-N_2^2t^2/4K},$
    \begin{align*}
        \frac{V_{N_2,K}(\rho^*)}{N_2}&\geq \sqrt{2\ln 2}-2t-\frac{K\sqrt{2\ln 2}}{N_2}\\
        &\geq \sqrt{2\ln 2}-3t-\frac{\alpha \sqrt{2\ln 2}}{1-\alpha}\geq -3\sqrt{2\ln 2}.
    \end{align*}
    Putting these together yields that on the event where the above inequalities hold,
    \begin{align*}
\min\Bigl(\frac{V_{N_1,K}^1(\tau^*)}{N_1},\frac{V_{N_2,K}^2(\rho^*)}{N_2}\Bigr)\geq -3\sqrt{2\ln 2}>-17\sqrt{2\ln 2}\geq M_{N,K}-\frac{4\eta}{c}.
    \end{align*}
\end{proof}

\begin{proof}[\bf Proof of Theorem \ref{thm:near-fittest-path}]
	Fix $0<\eta<1$ and $0<\alpha<1$ that satisfy \eqref{bridge:eq14}. Let $n\geq 10$ be fixed. Denote $k=\lfloor N/(n+1)\rfloor.$ Recall the genomes $\sigma^{(0)},\sigma^{(1)},\ldots,\sigma^{(n)}$ defined in \eqref{def:bridge}.
	Let $I_0,\ldots,I_{n-1}$ be a partition of $\{0,1,2,\ldots,N-1\}$ with
	$$
	I_l=\{l k,\,lk+1,\,\ldots,\,lk+k-1\}
	$$
	for $l=0,\ldots,n-2$
	and $$I_{n-1}=\{0,1,\ldots,N-1\}\setminus\bigl(\cup_{l=0}^{n-2}I_l\bigr).$$ 
    Note that $|I_l|=k$ for $0\leq l\leq n-2$ and $k\leq |I_{n-1}|<2k.$ For $0\leq l\leq n-1$, write $I_{\leq l}=\cup_{r=0}^lI_r$ and for $0\leq l\leq n-2$, write $I_{>l}=\cup_{r=l+1}^{n-1}I_r.$  
	By definition, for any $0\leq l\leq n-1,$ $\sigma^{(l+1)}$ and $\sigma^{(l)}$ could differ only at the loci in the set $I_l$, which readily yields that
	\begin{align*}
		Q(\sigma^{(l+1)},\sigma^{(l)})&\geq \frac{N-(|I_l|+2K)}{N} \geq 1-\frac{2}{n+1}-2\alpha
	\end{align*}
	and
	\begin{align*}
		R(\sigma^{(l+1)},\sigma^{(l)})&\geq\frac{N-2|I_l|}{N}\geq 1-\frac{4}{n+1}.
	\end{align*}
	
	Next, we show that if $\hat\sigma$ and $\check\sigma$ are near-fittest genomes satisfying \eqref{bridge:eq13}, then the $\sigma^{(l)}$'s satisfy \eqref{thm:near-fittest-path:eq11}. Fix $1\leq l\leq n-1.$ Let $N_1=|I_{\leq l-1}|=kl$ and $N_2=|I_{>l}|=N-N_1.$ Note that  for $N$ large,
	\begin{align*}
		\min\Bigl(\frac{N_1}{N},\frac{N_2}{N}\Bigr)\geq \frac{k}{N}\geq \frac{1}{N}\Bigl(\frac{N}{n+1}-1\Bigr)=\frac{1}{n+1}-\frac{1}{N}\geq \frac{1}{2(n+1)}=:c.
	\end{align*}
	Recall $V_{N_1,K}^1$ and $V_{N_2,K}^2$ from \eqref{bridge:eq11}. Similarly to \eqref{bridge:eq2}, write $\hat\sigma=(\hat\tau^{(l)},\hat \rho^{(l)})$ and $\check\sigma=(\check\tau^{(l)},\check\rho^{(l)})$.  From Lemma \ref{bridge:lem1},  there exists some $N_{0}$ depending only on $\alpha,\eta$ such that for any $N\geq N_{0}$, with probability at least $1-2\omega e^{-\eta^2c^2N/\omega}$, if both $\hat \sigma$ and $\check\sigma$ have fitnesses at least $M_{N,K}-\eta$, 
	then
	\begin{align*}
		\min\Bigl(\frac{V_{N_1,K}^1(\hat\tau^{(l)})}{N_1},\frac{V_{N_2,K}^2(\hat\rho^{(l)})}{N_2}\Bigr)\geq M_{N,K}-\frac{4\eta}{c},\\
		\min\Bigl(\frac{V_{N_1,K}^1(\check\tau^{(l)})}{N_1},\frac{V_{N_2,K}^2(\check\rho^{(l)})}{N_2}\Bigr)\geq M_{N,K}-\frac{4\eta}{c}.
	\end{align*}
	Let $E_{N,l}$ be the event on which this statement holds.
	Note that $\sigma^{(\ell)}=(\check \rho^{(l)},\hat \tau^{(l)}).$ From the same argument as \eqref{bridge:eq3}, it can be shown that with probability at least $1-2e^{-N^2\eta^2/8K}\geq 1-2e^{-N\eta^2/8},$
	\begin{align*}
		\frac{1}{N}\Bigl|H_{N,K}(\sigma^{(l)})-V_{N_1,K}^1(\check\tau^{(l)})-V_{N_2,K}^2(\hat\rho^{(l)})\Big|\leq \eta+\frac{2K\sqrt{2\ln 2}}{N}\leq \eta+2\alpha \sqrt{2\ln 2}.
	\end{align*}
	Denote by $\overlineit{E}_{N,l}$ the event on which this inequality holds.
	It follows that on $E_{N,l}\cap \overlineit{E}_{N,l},$ if \eqref{bridge:eq13} holds, then
	\begin{align}
		\nonumber  \frac{H_{N,K}(\sigma^{(l)})}{N}&\geq \frac{V_{N_1,K}^1(\check\tau^{(l)})}{N}+\frac{V_{N_2,K}^2(\hat\rho^{(l)})}{N}-\eta-2\alpha \sqrt{2\ln 2}\\
		\nonumber  &\geq M_{N,K}-\frac{4\eta}{c}-\eta-2\alpha\sqrt{2\ln 2}\\
		\label{bridge:eq100} &\geq M_{N,K}-(8n+10)\eta,
	\end{align}
	where we used $c=(2(n+1))^{-1}$ and \eqref{bridge:eq14}.
	Finally, let $E_{N}=\cap_{l=0}^{n-2}(E_{N,l}\cap \overlineit{E}_{N,l})$. Then as long as $N\geq N_{0}$, 
	\begin{align*}
		\p(E_N)&\geq 1-2n\omega e^{-\eta^2c^2N/\omega}-2ne^{-N\eta^2/8}
	\end{align*}
	and on $E_N,$ if \eqref{bridge:eq13} holds, then from \eqref{bridge:eq100},
	\begin{align*}
		\min_{0\leq \ell\leq n-1}\frac{H_{N,K}(\sigma^{(l)})}{N}\geq M_{N,K}-(8n+10)\eta.
	\end{align*}
\end{proof}

\medskip

{\bf \noindent Acknowledgements.} W.-K. C. thanks National Center for Theoretical Science in Taiwan for their hospitality during his visit between May 19-30, 2025, where part of this work was completed.

\footnotesize

\end{document}